\newcommand\leftmapsto{\mathrel{\reflectbox{\ensuremath{\mapsto}}}}
\newcommand{\nref}[1]{\hyperref[#1]{\ref*{#1}}}
\newcommand{\subtile}[1] %subtitle
{
	\vspace{-0.3cm}
	\begin{center}
 		{{\textsc{#1}}}\\
	\end{center}
	\vspace{0.1cm}
}
\newcommand{\imageplain}[2] %Image scaled
{
      {\includegraphics[scale=#1]{#2.png}}
}
\newcommand{\image}[2] %Image scaled centered
{
      \begin{center} %Image
	\includegraphics[scale=#1]{#2.png}
      \end{center}
}
\DeclareMathOperator{\ord}{ord}
\newcommand{\NN }{\mathbb{N}}
\newcommand{\RR }{\mathbb{R}}
\newcommand{\ZZ }{\mathbb{Z}}
\newcommand{\id }{\mathrm{id}}
\newcommand{\Ac }{\mathcal{A}}
\newcommand{\Kc }{\mathcal{K}}
\newcommand{\Cc }{\mathcal{C}}
\DeclareMathOperator{\Aut}{Aut}
\DeclareMathOperator{\Hom}{Hom}
\DeclareMathOperator{\ad}{ad}
\newcommand{\Cm }{C}
\newcommand{\cm }{c}
\newcommand{\rfl }{\rho }
\newcommand{\s }{\sigma }
\newcommand{\Wg }{\mathcal{W}}
\newcommand{\Ob }{\mathrm{Ob}}
\newcommand{\re }{^\mathrm{re}}
\newcommand{\rer }[1]{(R\re)^{#1}}
\newcommand{\rsC }{\mathcal{R}}
\newcommand{\BDelta}{\Delta}
\newcommand{\B}{\mathcal{B}} 	% Nichols Algebra
\newcommand{\g}{\mathfrak{g}} 	% arbitrary Lie algebra
\renewcommand{\sl}{\mathfrak{sl}}	% sl Lie algebra
\newcommand{\Z}{\mathbb{Z}}  	% Integers
\newcommand{\C}{\mathbb{C}}  	% Complex
\newcommand{\D}{\mathbb{D}}  	% Dihedral
\renewcommand{\S}{\mathbb{S}}  	% Symmetric
\newcommand{\CC}{\mathcal{C}}   % Bicategory
\newcommand{\YD}[1]{\;_{#1}^{#1}\mathcal{YD}}
\newcommand{\scheme}{graph }
\newcommand{\schemen}{graph}
\newcommand{\schemes}{graphs }
\newcommand{\schemesn}{graphs}
\theoremstyle{plain}
\newtheorem{theorem}{Theorem}[section]
\newtheorem*{theoremX}{Theorem}
\newtheorem{corollary}[theorem]{Corollary}
\newtheorem{lemma}[theorem]{Lemma}
\theoremstyle{remark}
\newtheorem{definition}[theorem]{Definition}
\newtheorem{example}[theorem]{Example}
\newtheorem*{exampleX}{Example}
\newtheorem{question}[theorem]{Question}
\newtheorem{remark}[theorem]{Remark}
\title[A simplicial complex of Nichols algebras]
{A simplicial complex of Nichols algebras}
\author{M.~Cuntz}
\address{Michael Cuntz,
Institut f\"ur Algebra, Zahlentheorie und Diskrete Mathematik,
Fakult\"at f\"ur Mathematik und Physik,
Leibniz Universit\"at Hannover,
Welfengarten 1,
D-30167 Hannover, Germany}
\email{cuntz@math.uni-hannover.de}
\author{S.~Lentner}
\address{Simon Lentner,
Fachbereich Mathematik, Bereich Algebra und Zahlentheorie, University Hamburg}
\email{simon.lentner@uni-hamburg.de}
\thanks{S.~Lentner is partly supported by the DFG Research Training Group 1670. Most results of this article were achieved at meetings supported by the DFG within the priority programme 1388.}
\begin{document}

\begin{abstract}
We translate the concept of restriction of an arrangement in terms of Hopf 
algebras. In consequence, every Nichols algebra gives rise to a simplicial 
complex decorated by Nichols algebras with restricted root systems.
As applications, some of these Nichols algebras provide Weyl groupoids which do 
not arise for diagonal Nichols algebras and in fact we 
realize all root systems of finite Weyl groupoids of rank greater than three. 
Further, our result explains the root systems of the folded Nichols algebras 
over nonabelian groups and of generalized Satake diagrams.
\end{abstract}

\maketitle

\setcounter{tocdepth}{2}
\tableofcontents

\section{Introduction}

Nichols algebras are Hopf algebras in a braided category, that enjoy certain universal properties and appear naturally in every pointed Hopf algebra. The most prominent examples are the Borel parts $u_q(\g)^\pm$ of the small quantum groups. In fact the Lie-theoretical flavour is retained in general, and we know from \cite{AHS10} that any Nichols algebra is controlled by a Weyl groupoid. This is a natural generalization of a Weyl group, where different Dynkin diagrams are attached to the different groupoid objects. Since \cite{p-C10} we have a good perception of a Weyl groupoid as an arrangement of hyperplanes, and in \cite{p-CH10} all finite Weyl groupoids were classified: There are infinitely many finite Weyl groupoids of rank $2$, an additional infinite series between $D_n$ and $C_n$, as well as $74$ sporadic cases up to rank $8$.

However, compared to the theory of semisimple Lie algebras where root systems provide a complete classification (at least over the complex numbers), for Nichols algebras the situation is more complicated: There are non-isomorphic Nichols algebras whose corresponding Weyl groupoids are equivalent, and it is not known yet whether all Weyl groupoids arise as symmetry structures of Nichols algebras (not even in the case of finite Weyl groupoids).
The latter problem has been brought up on several occasions, notably by the first author at the Oberwolfach Mini-Workshop on ``Nichols Algebras and Weyl Groupoids'' in 2012:
\begin{question}\label{q_Cuntz}
Is there a Weyl groupoid which does not occur as symmetry structure of a Nichols algebra?\footnote{For Nichols algebras over groups, not all Weyl groupoids appear.} Is there a characterization of those root systems that cannot appear as root systems of Nichols algebras?
\end{question}

One way to answer part of these questions is to introduce constructions producing new Nichols algebras (and hopefully new Weyl groupoids as well).
For example, \emph{folding of root systems} has been investigated by the second author to
construct new large rank Nichols algebras over nonabelian groups \cite{Len14a}. As proven recently by Heckenberger and Vendramin \cite{HV14}, these Nichols algebras are the only examples in
large rank and characteristic zero. There are four exceptions of rank two and three which are not
obtained by this folding construction, and for rank one the classification is still largely open. The second author hence asked at the Oberwolfach Mini-Workshop on ``Infinite dimensional Hopf algebras'' in 2014:
\begin{question}\label{q_Lentner}
Systematically explain the impact of the folding contruction on the root system\footnote{This has previously been calculated by hand.}. Are there more general folding constructions on the root system of a Nichols algebra? Is there a closed construction of all Nichols algebras over nonabelian groups in this way? 
\end{question}

In this article we discuss such a construction, which partly answers the previous questions, namely \emph{restriction} on the arrangement of hyperplanes given by the root system. More precisely we proceed as follows:\\

The set $\Ac$ of hyperplanes $\alpha^\perp\subset V$ orthogonal to the roots is a \emph{crystallographic arrangement} and this induces an equivalence between crystallographic arrangements and finite root systems. For details see the preliminaries in Section \ref{sec_RootSystems}.

Let $X$ be a subspace of $V$. Then the restriction $\Ac^X$ is the set of hyperplanes in $\Ac$ of the form $X\cap H$ for $H\in\Ac$. This construction is discussed in Section 3. Typically, not all hyperplanes $H\in\Ac$ give rise to hyperplanes $X\cap H$ in $\Ac^X$, and several hyperplanes in $\Ac$ may give rise to the same hyperplane in $\Ac^X$.\\ 
There are two important special cases:\\
% \begin{itemize}
% \item 
$\bullet$ The case when $X$ is an intersection of some hyperplanes of $\Ac$, we call this \emph{parabolic restriction}. Then $\Ac^X$ is automatically again crystallographic. In fact, most finite Weyl groupoids appear in this way, including examples which are not attained from Nichols algebras over groups.
\\
% \item 
$\bullet$ The case when $X$ is the fixpoint set of an automorphism $g$ of $\Ac$. In general, $\Ac^X$ must not be crystallographic, but in several cases it is. In the special case where $g$ permutes a set of simple roots (called \emph{permutation restriction}), these restrictions describe the root system of the folded Nichols algebra over the centrally extended group.\\
% \end{itemize}

In Section \ref{sec_arrnich} we turn our attention to Weyl groupoids of Nichols 
algebras. We start with a Nichols algebra $\B(M)$, which is typically diagonal.
Let $J\subset I$ be a subset of the simple roots for
$\B(M)$, denote by $M_J\subset M$ the corresponding sub-Yetter-Drinfel'd module 
and consider the associated Nichols algebra of coinvariants 
$$\B(\bar{M}):=\B(M)^{coin(\B(M_J))}.$$
This is a Nichols algebra in the category of $\B(M_J)$-Yetter-Drinfel's modules. The key result of this article in Theorem \nref{thm_NicholsAlgebraRestrictionPBW} is that the root system of
$\B(\bar{M})$ is the parabolic restriction $\Ac^X,\;X=J^\perp$.  Moreover, 
$\B(M_J)$ (determining the category) corresponds precisely to the sub-arrangement of hyperplanes in $\Ac$ that do not give rise to hyperplanes in $\Ac^X$, while the number of different hyperplanes in $\Ac$ that give rise to the same hyperplane in $\Ac^X$, determines the dimensions of the Yetter-Drinfel'd modules in the PBW-basis of $\B(\bar{M})$.\\

In Section \ref{simpcomp_section} we conclude by viewing the set of all parabolic restrictions of a Nichols algebra as a simplicial complex. In this picture, the reflection operation for Nichols algebras has a particularly nice interpretation in terms of restrictions and dualization.\\

\noindent
We now discuss \emph{applications}, most of which are new and interesting 
already for the case of a diagonal Nichols algebra and even for the special 
case of a quantum group $u_q(\g)^+$ associated to a Lie algebra $\g$:\\

As first result, we obtain Nichols algebras with Weyl groupoids, which were not 
attained from a previously known Nichols algebra over a group:\\

\begin{theoremX}[\nref{thm_newWeylGroupoid}]~ 
\begin{enumerate}
\item There exist Nichols algebras whose corresponding crystallographic arrangements are the sporadic arrangements of rank three labeled $7,13,14,15,20,23$, although these Nichols algebras do not exist over any finite group.
\item Since every crystallographic arrangement of rank greater than three is a restriction of a Weyl arrangement, every crystallographic arrangement of rank greater than three is symmetry structure of some Nichols algebra.
\end{enumerate}
\end{theoremX}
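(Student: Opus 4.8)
The plan is to realise each of the arrangements in question as a \emph{parabolic restriction} of the arrangement of a diagonal Nichols algebra, and then to invoke Theorem~\nref{thm_NicholsAlgebraRestrictionPBW}. Recall its content: if $\B(M)$ is a diagonal Nichols algebra with crystallographic arrangement $\Ac$, and $J\subseteq I$ is a subset of its simple roots, then $\B(\bar{M})=\B(M)^{coin(\B(M_J))}$ is a Nichols algebra (in the category of $\B(M_J)$-Yetter-Drinfel'd modules) whose root system is the parabolic restriction $\Ac^X$ with $X=J^\perp$. Hence, to exhibit a crystallographic arrangement $\Bc$ as the symmetry structure of \emph{some} Nichols algebra, it suffices to produce a diagonal Nichols algebra $\B(M)$ and a subset $J$ with $\Ac^{J^\perp}\cong\Bc$.

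\emph{Part (2).} Let $\Bc$ be a crystallographic arrangement of rank $>3$. As recalled above, $\Bc$ arises as a parabolic restriction $\Ac^X$ of the reflection arrangement $\Ac$ of a finite Weyl group $W$. Since every intersection of hyperplanes of $\Ac$ is $W$-conjugate to one of the form $J^\perp$ for $J$ a subset of simple roots, and a $W$-conjugation does not change the isomorphism type of the restriction, we may assume $X=J^\perp$. Now $\Ac$ is the crystallographic arrangement of the diagonal Nichols algebra $\B(M)=u_q(\g)^+$, where $\g$ is the semisimple Lie algebra whose Weyl group is $W$ (with $q$ generic, or a root of unity if a finite-dimensional witness is wanted). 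By Theorem~\nref{thm_NicholsAlgebraRestrictionPBW} applied to this $J$, the Nichols algebra $\B(\bar{M})$ has root system $\Ac^X\cong\Bc$, which is the claim.

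\emph{Part (1).} Here we proceed case by case. For each label $\ell\in\{7,13,14,15,20,23\}$ we exhibit an explicit finite-type diagonal Nichols algebra $\B(M)$ of rank $\geq 4$, taken from the classification of arithmetic root systems, together with a subset $J\subsetneq I$, such that the parabolic restriction $\Ac^{J^\perp}$ of its arrangement is exactly the sporadic rank-three arrangement labelled $\ell$; Theorem~\nref{thm_NicholsAlgebraRestrictionPBW} then realises this arrangement as the root system of $\B(\bar{M})$. It remains to check that none of these six arrangements is the root system of a Nichols algebra over a finite group. Over a finite abelian group this is immediate: there the braiding is of diagonal type, the root system is an arithmetic root system, and inspection of the classification shows that none of the six labels occurs. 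Over a nonabelian finite group the possible rank-three root systems are constrained by the known classification results (see \cite{HV14} and the references therein), and again none of the six occurs.

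The routine part is Part (2), which is a direct application of Theorem~\nref{thm_NicholsAlgebraRestrictionPBW}; the work lies in Part (1). The principal obstacle there is the explicit matching: one has to run through the finite list of finite-type diagonal Nichols algebras of rank $4$ (and of higher rank, should it be needed), compute all of their parabolic restrictions to rank three, and locate the arrangements $7,13,14,15,20,23$ among the outcomes, identifying them inside the catalogue of sporadic rank-three crystallographic arrangements. The accompanying non-existence statement, though conceptually transparent, rests on the (substantial) classifications of Nichols algebras over abelian and over nonabelian finite groups.
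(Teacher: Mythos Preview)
Your overall strategy is correct and matches the paper's: combine Theorem~\nref{thm_NicholsAlgebraRestrictionPBW} with an identification of the desired arrangements as parabolic restrictions of arrangements of diagonal Nichols algebras, and use the classifications over abelian and nonabelian groups (Corollary~\nref{cor_NicholsAlgebraExistingRootsystems}) for the non-existence clause. Part~(2) is exactly as in the paper.

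For Part~(1), however, there is a concrete mismatch. You propose to search through diagonal Nichols algebras starting in rank~$4$ ``(and of higher rank, should it be needed)''. The paper does not do this: it has already established in Theorem~\nref{thm_AllRestrictions} (by a computer enumeration over \emph{all} crystallographic arrangements, not merely those from Heckenberger's list) exactly which rank-three sporadic arrangements arise as parabolic restrictions, and moreover identifies their sources. The six labels $7,13,14,15,20,23$ are obtained as restrictions of the \emph{Weyl} arrangements $E_7,E_8,E_8,E_8,E_8,E_8$ respectively; one then takes $\B(M)=u_q(E_7)^+$ or $u_q(E_8)^+$ and applies Theorem~\nref{thm_NicholsAlgebraRestrictionPBW}. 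In particular, a search confined to rank~$4$ would not succeed, and the relevant ambient Nichols algebras are the simplest possible (ordinary quantum Borel parts), not exotic entries of the arithmetic classification. So the ``explicit matching'' you flag as the principal obstacle has already been carried out in Theorem~\nref{thm_AllRestrictions}; you should cite that result rather than outline a fresh search.
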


\begin{exampleX}[\nref{exm_newWeylGroupoid}]
Let $\B(M)=u_q(E_7)^+$, which is a diagonal Nichols algebra of dimension 
$\ord(q^2)^{63}$. We consider the parabolic restriction indicated in the 
diagram:
\begin{center}
\includegraphics[scale=.6]{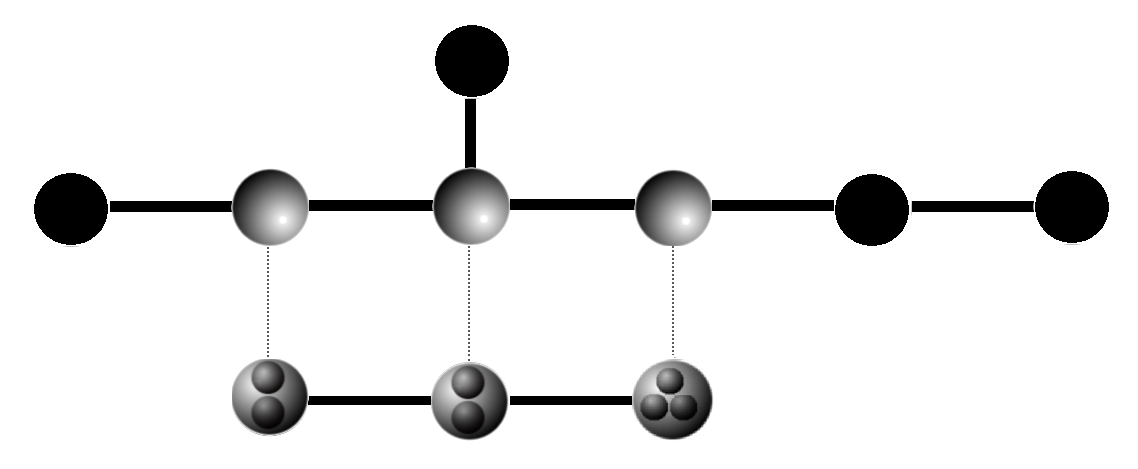}
\end{center}

Then the restriction of the Weyl group arrangement $E_7$ with $63$ roots has a 
Weyl groupoid of sporadic type $7$ and $13$ roots (one of which is nonreduced), 
namely\\

\begin{center}
\begin{tabular}{l|lllllllllllll}
$R_+^a$ 				&	
$\bar{\alpha}_3$&$\bar{\alpha}_4$	&$\bar{\alpha}_5$ &	$(1,1,0)$ & 
$(0,1,1)$	&
$(1,1,1)$ & $+2(1,1,1)$	\\
multiplicity 		& $2$							
& $2$							& $3$				
			&	$4$				& $6$ 			
& 
$12$ & +3\\
\hline
$R_+^a$ 				& $(1,2,1)$	& $(2,2,1)$	& 
$(1,2,2)$	& $(2,3,2)$	& $(2,3,3)$	& $(2,4,3)$&$(3,4,3)$\\
multiplicity 		&	$6$				& $3$			
	& $6$				& $6$				& $2$		
		& $1$ 			& $2$\\
\end{tabular}
\end{center}~\\

This yields a Nichols algebra $\B(\bar{M})$ of dimension $\ord(q^2)^{58}$ over 
an object $\bar{M}$ of rank $3$ and dimension $2+2+3$ in the braided category 
$\YD{\B(M_J)}$ with $J=\{\alpha_1,\alpha_2,\alpha_6,\alpha_7\}$, where 
$\B(M_J)$ 
is an ordinary Borel part $u_q(A_1\times A_1\times A_2)^+$.\\

This root system is clearly not one of Lie type. This means that reflections in 
the Weyl groupoid connect the Weyl chamber described above with other chambers 
corresponding possibly to different Dynkin diagrams and non-isomorphic Nichols 
algebras (compare Example \nref{exm_CycleNicholsAlgebra}).\\

For the restriction these different choices arise from different parabolics 
$J'$ of $E_7$ in the Weyl group orbit of the choice $J$ above. In 
the main text, this is discussed in more detail.
\end{exampleX}

As a second application, we note that quantum groups $u_q(\g)$ (and other 
pointed Hopf algebras \cite{AY13}) can be obtained from a Drinfel'd double 
construction. Our restricted Nichols algebra $\B(\bar{M})$ exists in the 
category $\YD{\B(M_J)}$, hence it is roughly a Nichols algebra in the category 
of $u_q(\g_J)$-modules, where $\g_J$ is the Lie subalgebra of $\g$ associated to 
the subset of simple roots $J$.
 As an example\footnote{We are thankful to N.\ Andruskiewitsch for pointing out 
this source to us.} with $J=I\backslash\{\alpha_1\}$ of type $A_{n-1}$, this 
explains the $\sl_n$-representations in \cite[Sec.\ 4]{Rosso98}. See our 
Example \nref{exm_RestrictionA2}.

\begin{question}
	Can Nichols algebras $\B(\bar{M})$ in the category of modules over a 
quantum double of some $\B(M_J)$ and in a second step over $u_q(\g_J)$ be 
characterized by restrictions of ordinary Nichols algebras $\B(M),M\supset M_J$? 
This would be the first Nichols algebras over non-semisimple Hopf algebras, and 
it would also be interesting from the perspective of logarithmic conformal field 
theory.
\end{question}

A third application is in the context of quantum symmetric pairs: Our graphical representation using black dots is suggested by \emph{Satake diagrams} of Lie algebras\footnote{We thank S.\ Kolb for having brought up this topic to our attention at the Oberwolfach miniworkshop mentioned above.}. Indeed, we define \emph{folding} of a crystallographic arrangement (resp.\ the root system of a Weyl groupoid) as the restriction to the fixed-point set $X$ of some automorphism group of the root system, and we prove a respective Lemma \nref{lm_folding} that enables us to classify such scenarios again by Satake diagrams. 

\begin{example}
	The previous example shows that we even get new Satake diagrams for ordinary Lie algebras, here $E_7$ over a parabolic subalgebra $A_1\times A_1\times A_2$. This would be excluded by the classical theory, because the restricted root system (relative root system, quotient root system) corresponds to a Weyl groupoid.
	\end{example}

Satake diagrams for Nichols algebras of Lie type appear prominently in the theory of quantum symmetric pairs, introduced by Noumi, Sugitani, and Dijkhuizen for $\g$ of classical type in connection with the reflection equation and in theoretical physics (see e.g. \cite{NS95}) and independently by G.\ Letzter from the Iwasawa decomposition for the quantum group (see e.g.\ \cite{Let97}), generalized to Kac Moody algebras by \cite{Kolb14}. Our results show that one can consider more general Satake diagrams for Lie type root systems, which do not yield Lie type root systems anymore, as in the example above. On the other hand they allow Satake diagrams to be considered for arbitrary Nichols algebras, in particular for super Lie algebras and color Lie algebras. \\

As a final outlook, this may point to a more general folding construction for Nichols algebras than in \cite{Len14a}, since in the known cases the restricted root system $\Ac^X$ reproduces the correct folded root system. As a new Example \nref{exm_newNicholsAlgebra} we identify an inclusion chain of three Nichols algebra, for which restriction would reproduce the correct root system and root space dimensions for the exceptional Nichols algebras of rank $3,2,1$ over nonabelian groups involving $\S_3$ in \cite{HV14}:
\begin{center} 
		\includegraphics[scale=.8]{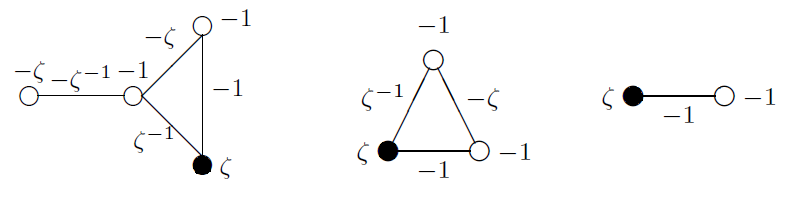}
\end{center}
\enlargethispage{1cm}
In this article we do \emph{not} attempt to define non-parabolic restriction of Nichols algebras, that should come with an additional group action on the root spaces $M_\alpha$. Note however that in the theory of quantum symmetric pairs there are explicit methods to construct coideal subalgebras corresponding to the fixed-point set of the root system automorphism. This might give the right inspiration for constructing the respective restricted Nichols algebra in the general (non-parabolic, non-permutation) case.

\section{Weyl groupoids and crystallographic arrangements}\label{sec_RootSystems}

\subsection{Simplicial arrangements}
Let $r\in\NN$, $V:=\RR^r$. For $\alpha\in V^*$ we write
$\alpha^\perp = \ker(\alpha)$.
We first recall the definition of a simplicial arrangement (compare \cite[1.2, 5.1]{OT}).
\begin{definition}\label{A_R}
An \emph{arrangement of hyperplanes} $\Ac$ is a finite set of hyperplanes in $V$.
Let $\Kc(\Ac)$ be the set of connected components ({\it chambers}) of $V\backslash \bigcup_{H\in\Ac} H$.
If every chamber $K$ is an {\it open simplicial cone}, i.e.\ there exist
$\alpha^\vee_1,\ldots,\alpha^\vee_r \in V$ such that
\begin{equation*}
K = \Big\{ \sum_{i=1}^r a_i\alpha^\vee_i \mid a_i> 0 \quad\mbox{for all}\quad
i=1,\ldots,r \Big\} =: \langle\alpha^\vee_1,\ldots,\alpha^\vee_r\rangle_{>0},
\end{equation*}
then $\Ac$ is called a {\it simplicial arrangement}.
\end{definition}

\begin{example}~
\begin{enumerate}
\item The picture on the left is a simplicial arrangement in $\RR^2$. The picture on the right is a representation of a three dimensional simplicial arrangement in the projective plane. The simplicial cones become triangles in this representation.
%\begin{figure}
\begin{center}
\setlength{\unitlength}{0.2pt}
\begin{picture}(400,380)(100,180)
\moveto(300.000000000000000000000000000,600.000000000000000000000000000)
\lineto(300.000000000000000000000000000,200.000000000000000000000000000)
\moveto(368.404028665133746608819922937,212.061475842818323189178144535)
\lineto(231.595971334866253391180077064,587.938524157181676810821855465)
\moveto(428.557521937307865264528681981,246.791111376204392959521469889)
\lineto(171.442478062692134735471318019,553.208888623795607040478530111)
\moveto(473.205080756887729352744634151,300.000000000000000000000000000)
\lineto(126.794919243112270647255365849,500.000000000000000000000000000)
\moveto(496.961550602441611873348604918,365.270364466613930229656674646)
\lineto(103.038449397558388126651395082,434.729635533386069770343325354)
\moveto(496.961550602441611873348604918,434.729635533386069770343325354)
\lineto(103.038449397558388126651395082,365.270364466613930229656674646)
\moveto(473.205080756887729352744634151,500.000000000000000000000000000)
\lineto(126.794919243112270647255365849,300.000000000000000000000000000)
\moveto(428.557521937307865264528681981,553.208888623795607040478530111)
\lineto(171.442478062692134735471318019,246.791111376204392959521469889)
\moveto(368.404028665133746608819922937,587.938524157181676810821855465)
\lineto(231.595971334866253391180077064,212.061475842818323189178144535)
\strokepath
\end{picture} \quad\quad\quad\quad
\setlength{\unitlength}{0.5pt}
\begin{picture}(130,180)(240,480)
\moveto(231.120000000000000000000000000,546.960000000000000000000000000)
\lineto(386.880000000000000000000000000,546.960000000000000000000000000)
\moveto(230.160000000000000000000000000,587.520000000000000000000000000)
\lineto(387.600000000000000000000000000,587.520000000000000000000000000)
\moveto(329.520000000000000000000000000,645.360000000000000000000000000)
\lineto(329.760000000000000000000000000,488.400000000000000000000000000)
\moveto(289.200000000000000000000000000,645.360000000000000000000000000)
\lineto(289.200000000000000000000000000,488.400000000000000000000000000)
\moveto(264.960000000000000000000000000,635.280000000000000000000000000)
\lineto(336.960000000000000000000000000,491.280000000000000000000000000)
\moveto(281.280000000000000000000000000,491.280000000000000000000000000)
\lineto(352.560000000000000000000000000,634.080000000000000000000000000)
\moveto(281.280000000000000000000000000,643.200000000000000000000000000)
\lineto(353.280000000000000000000000000,499.200000000000000000000000000)
\moveto(264.240000000000000000000000000,498.480000000000000000000000000)
\lineto(336.960000000000000000000000000,643.200000000000000000000000000)
\moveto(241.920000000000000000000000000,611.040000000000000000000000000)
\lineto(385.200000000000000000000000000,539.040000000000000000000000000)
\moveto(233.280000000000000000000000000,539.040000000000000000000000000)
\lineto(376.800000000000000000000000000,611.040000000000000000000000000)
\moveto(241.920000000000000000000000000,523.440000000000000000000000000)
\lineto(385.200000000000000000000000000,594.720000000000000000000000000)
\moveto(233.280000000000000000000000000,594.720000000000000000000000000)
\lineto(376.800000000000000000000000000,522.720000000000000000000000000)
\moveto(284.160000000000000000000000000,643.680000000000000000000000000)
\lineto(334.560000000000000000000000000,490.800000000000000000000000000)
\moveto(232.320000000000000000000000000,541.920000000000000000000000000)
\lineto(385.200000000000000000000000000,592.560000000000000000000000000)
\moveto(228.240000000000000000000000000,567.120000000000000000000000000)
\lineto(390.240000000000000000000000000,567.360000000000000000000000000)
\moveto(309.360000000000000000000000000,648.240000000000000000000000000)
\lineto(309.360000000000000000000000000,486.240000000000000000000000000)
\moveto(250.800000000000000000000000000,625.200000000000000000000000000)
\lineto(367.200000000000000000000000000,508.800000000000000000000000000)
\moveto(250.800000000000000000000000000,508.800000000000000000000000000)
\lineto(367.200000000000000000000000000,625.200000000000000000000000000)
\strokepath
\end{picture}
%caption{Simplicial arrangements in $\RR^2$ and $\RR^3$.}
\end{center}
%\label{fig1}
%\end{figure}
\item Let $W$ be a real reflection group, $R$ the set of roots of $W$. Then
$\Ac = \{\alpha^\perp\mid \alpha\in R\}$ is a simplicial arrangement.
\end{enumerate}
\end{example}

\subsection{Crystallographic arrangements}\label{cryarr}

Let $\Ac=\{H_1,\ldots,H_n\}$, $|\Ac|=n$ be a simplicial arrangement.
For each $H_i$, $i=1,\ldots,n$ we choose an element $x_i\in V^*$ such that $H_i=x_i^\perp$
and let $R := \{\pm x_1,\ldots,\pm x_n\}\subseteq V^*$.

Since $\Ac$ is simplicial, for each chamber $K\in\Kc(\Ac)$ there is a unique choice of a basis $\{\alpha^\vee_1,\ldots,\alpha^\vee_r\}$ such that $K=\langle \alpha^\vee_1,\ldots,\alpha^\vee_r \rangle_{>0}$ and such that its dual basis $\{\alpha_1,\ldots,\alpha_r\}$ is a subset of $R$.
We write
\begin{eqnarray*}
\BDelta^K &=& \{\alpha_1,\ldots,\alpha_r\}\\
&=& \{\mbox{ ``normal vectors in $R$ of the walls of } K \mbox{ pointing to the inside'' }\}.
\end{eqnarray*}

% For each chamber $K\in\Kc(\Ac)$ set
% \begin{eqnarray*}
% \BDelta^K &=& \{\mbox{ normal vectors in $R$ of the walls of } K \mbox{ pointing to the inside }\} .
% \end{eqnarray*}
% If $\alpha^\vee_1,\ldots,\alpha^\vee_r$ is the dual basis to
% $\BDelta^K=\{\alpha_1,\ldots,\alpha_r\}$, then
% $K=\langle \alpha^\vee_1,\ldots,\alpha^\vee_r \rangle_{>0}$
% since $\Ac$ is simplicial.

We are now ready for the main definition.

\begin{definition}
Let $\Ac$ be a simplicial arrangement and $R\subseteq V^*$ a finite set
such that $\Ac = \{ \alpha^\perp \mid \alpha \in R\}$ and $\RR\alpha\cap R=\{\pm \alpha\}$
for all $\alpha \in R$.
We call $(\Ac,R)$ a {\it crystallographic arrangement} if
for all $K\in\Kc(\Ac)$:
\[ R \subseteq \sum_{\alpha \in \BDelta^K} \ZZ \alpha. \]
Two crystallographic arrangements $(\Ac,R)$, $(\Ac',\bar{R})$ in $V$ are called {\it equivalent}
if there exists $\psi\in\Aut(V^*)$ with $\psi(R)=\bar{R}$. We then write $(\Ac,R)\cong(\Ac',\bar{R})$.
\end{definition}

\begin{example}
\begin{enumerate}
\item Let $R$ be the set of roots of the root system
of a crystallographic Coxeter group. Then
$(\{\alpha^\perp\mid \alpha\in R\},R)$ is a crystallographic arrangement.
\item If $R_+ := \{(1,0),(3,1),(2,1),(5,3),(3,2),(1,1),(0,1)\}$,
then\\ $(\{\alpha^\perp\mid \alpha\in R_+\},R_+\dot\cup-R_+)$ is a crystallographic arrangement.
\end{enumerate}
\end{example}

As most crystallographic arrangements are not defined by reflection groups, they have less symmetries in general.
As a substitute for this large symmetry group, it turns out that a corresponding \emph{Weyl groupoid} is the right symmetry structure.

\subsection{Cartan \schemes and Weyl groupoids}\label{CSWG}

We now define the notion of a Weyl groupoid which was introduced by Heckenberger and Yamane \cite{p-HY-08} and reformulated in \cite{p-CH09a}.
But before we present the axioms, let us look at an example.

\begin{example}\label{cswg_ex}
Let $\alpha_1=(1,0,0)$, $\alpha_2=(0,1,0)$, $\alpha_3=(0,0,1)$, and
$$R^a_+:=\{\alpha_1,\alpha_2,\alpha_3,(0,1,1),(0,1,2),(1,0,1),(1,1,1),(1,1,2)\}.$$
For $1\le i,j\le 3$, define entries $\cm_{i,j}$ of a matrix $C$ by
\[ \cm_{i,j}:=-\max\{ k \mid k\alpha_i+\alpha_j \in R^a_+ \},
\quad\quad \cm_{i,i}:=2\quad\quad\mbox{for}\quad i\ne j, \]
thus
\[ C^a = (c_{i,j})_{i,j} =
\begin{pmatrix} 2&0&-1\\0&2&-1\\-1&-2&2 \end{pmatrix}. \]
This is a {\it generalized Cartan matrix}. It defines {\it reflections} via
\[  \s _i (\alpha_j) = \alpha_j - \cm _{ij} \alpha_i \qquad
    \text{for $j=1,2,3$.} \]
For instance,
\[ \s_1 = \begin{pmatrix} -1&0&1\\0&1&0\\0&0&1 \end{pmatrix}\]
and
\[ \s_1(R^a_+)=\{-\alpha_1,\alpha_2,(1,0,1),(1,1,1),(2,1,2),\alpha_3,(0,1,1),(1,1,2)\}. \]
The elements of $\s_1(R^a_+)$ are {\it positive} or {\it negative}.
Let $R^a=R^a_+\cup -R^a_+$ and
$R^b = \s_1(R^a) =: R^b_+ \cup -R^b_+$.
Again, one can construct a Cartan matrix from $R^b_+$ and it gives
new reflections. In this example, we obtain a diagram:
\begin{tiny}
\[ \xymatrix{
{\left( \begin{array}{ccc}
 2&0&-1\\0&2&-1\\-1&-1&2
\end{array} \right)} \ar@{-}[r]^{\sigma_3}
&
{\left( \begin{array}{ccc}
 2&-1&-1\\-1&2&-1\\-1&-1&2
\end{array} \right)} \ar@{-}[r]^{\sigma_2}
&
{\left( \begin{array}{ccc}
 2&-1&0\\-1&2&-1\\0&-1&2
\end{array} \right)} \ar@{-}[d]^{\sigma_1}
\\
{\left( \begin{array}{ccc}
 2&0&-1\\0&2&-1\\-1&-2&2
\end{array} \right)} \ar@{-}[u]^{\sigma_1}
& &
{\left( \begin{array}{ccc}
 2&-1&0\\-1&2&-2\\0&-1&2
\end{array} \right)}
} \]
\end{tiny}
\end{example}
For the general definition, we first recall:

\begin{definition}
Let $I:=\{1,\ldots,r\}$ and
$\{\alpha_i\,|\,i\in I\}$ the standard basis of $\ZZ ^I$.
% By \cite[\S 1.1]{b-Kac90} 
A {\it generalized Cartan matrix}
$\Cm =(\cm _{ij})_{i,j\in I}$
is a matrix in $\ZZ ^{I\times I}$ such that
\begin{enumerate}
\item[(M1)] $\cm _{ii}=2$ and $\cm _{jk}\le 0$ for all $i,j,k\in I$ with
  $j\not=k$,
\item[(M2)] if $i,j\in I$ and $\cm _{ij}=0$, then $\cm _{ji}=0$.
\end{enumerate}
\end{definition}

The above diagram of matrices is a \emph{Cartan \schemen}\footnote{In earlier papers, Cartan graphs were called Cartan schemes. The new term was chosen by Andruskiewitsch, Heckenberger and Schneider to avoid confusion with geometric schemes.}:

\begin{definition}
Let $A$ be a non-empty set, $\rfl _i : A \to A$ a map for all $i\in I$,
and $\Cm ^a=(\cm ^a_{jk})_{j,k \in I}$ a generalized Cartan matrix
in $\ZZ ^{I \times I}$ for all $a\in A$. The quadruple
\[ \Cc = \Cc (I,A,(\rfl _i)_{i \in I}, (\Cm ^a)_{a \in A})\]
is called a \textit{Cartan \schemen} if
\begin{enumerate}
\item[(C1)] $\rfl _i^2 = \id$ for all $i \in I$,
\item[(C2)] $\cm ^a_{ij} = \cm ^{\rfl _i(a)}_{ij}$ for all $a\in A$ and
  $i,j\in I$.
\end{enumerate}
\end{definition}

\begin{definition}
Let $\Cc = \Cc (I,A,(\rfl _i)_{i \in I}, (\Cm ^a)_{a \in A})$ be a
Cartan \schemen. For all $i \in I$ and $a \in A$ define $\s _i^a \in
\Aut(\ZZ ^I)$ by
\begin{align}
\s _i^a (\alpha_j) = \alpha_j - \cm _{ij}^a \alpha_i \qquad
\text{for all $j \in I$.}
\label{fwg_eq:sia}
\end{align}
The \textit{Weyl groupoid of} $\Cc $
is the category $\Wg (\Cc )$ such that $\Ob (\Wg (\Cc ))=A$ and
the morphisms are compositions of maps
$\s _i^a$ with $i\in I$ and $a\in A$,
where $\s _i^a$ is considered as an element in $\Hom (a,\rfl _i(a))$.
The cardinality of $I$ is the \textit{rank of} $\Wg (\Cc )$.
\end{definition}
\begin{definition}
A Cartan \scheme is called \textit{connected} if its Weyl grou\-poid
is connected, that is, if for all $a,b\in A$ there exists $w\in \Hom (a,b)$.
The Cartan \scheme is called \textit{simply connected},
if $\Hom (a,a)=\{\id ^a\}$ for all $a\in A$.
There is a straight forward notion of \emph{equivalence} of Cartan \schemes which we skip here.
\end{definition}

Let $\Cc $ be a Cartan \schemen. For all $a\in A$ let
\[ \rer a=\{ \id ^a \s _{i_1}\cdots \s_{i_k}(\alpha_j)\,|\,
k\in \NN _0,\,i_1,\dots,i_k,j\in I\}\subseteq \ZZ ^I.\]
The elements of the set $\rer a$ are called \textit{real roots} (at $a$).
The pair $(\Cc ,(\rer a)_{a\in A})$ is denoted by $\rsC \re (\Cc )$.
A real root $\alpha\in \rer a$, where $a\in A$, is called positive
(resp.\ negative) if $\alpha\in \NN _0^I$ (resp.\ $\alpha\in -\NN _0^I$).

\begin{definition}
Let $\Cc =\Cc (I,A,(\rfl _i)_{i\in I},(\Cm ^a)_{a\in A})$ be a Cartan
scheme. For all $a\in A$ let $R^a\subseteq \ZZ ^I$, and define
$m_{i,j}^a= |R^a \cap (\NN_0 \alpha_i + \NN_0 \alpha_j)|$ for all $i,j\in
I$ and $a\in A$. We say that
\[ \rsC = \rsC (\Cc , (R^a)_{a\in A}) \]
is a \textit{root system of type} $\Cc $, if it satisfies the following
axioms.
\begin{enumerate}
\item[(R1)]
$R^a=R^a_+\cup - R^a_+$, where $R^a_+=R^a\cap \NN_0^I$, for all
$a\in A$.
\item[(R2)]
$R^a\cap \ZZ\alpha_i=\{\alpha_i,-\alpha_i\}$ for all $i\in I$, $a\in A$.
\item[(R3)]
$\s _i^a(R^a) = R^{\rfl _i(a)}$ for all $i\in I$, $a\in A$.
\item[(R4)]
If $i,j\in I$ and $a\in A$ such that $i\not=j$ and $m_{i,j}^a$ is
finite, then
$(\rfl _i\rfl _j)^{m_{i,j}^a}(a)=a$.
\end{enumerate}
\end{definition}

% The axioms (R2) and (R3) are always fulfilled for $\rsC \re $.
The root system $\rsC $ is called \textit{finite} if for all $a\in A$ the
set $R^a$ is finite. By \cite[Prop.\,2.12]{p-CH09a},
if $\rsC $ is a finite root system
of type $\Cc $, then $\rsC =\rsC \re $, and hence $\rsC \re $ is a root
system of type $\Cc $ in that case.

\begin{remark}
If $\Cc $ is a Cartan \scheme and there exists a root system of type $\Cc $,
then $\Cc $ satisfies
\begin{itemize}
  \item [(C3)] If $a,b\in A$ and $\id \in \Hom (a,b)$, then $a=b$.
\end{itemize}
\end{remark}

\begin{example}[Lie type]
Let $\g$ be a semisimple finite-dimensional complex Lie algebra. Then this is uniquely determined (up to isomorphisms) by its corresponding root system, which is the root system of a finite Weyl group $W$. The corresponding Cartan \scheme has exactly one object $a$ where $C^a$ is the Cartan matrix of $W$. The set $R^a$ is the root system of $W$.
\end{example}

\subsection{Classification of finite Weyl groupoids}\label{sec_ClassificationWeylGroupoid}

Connected simply connected Cartan \schemes for which the real roots are a finite root system (these are also called \emph{universal finite Weyl groupoids}) are in one-to-one correspondence with crystallographic arrangements.
Under the correspondence, every chamber $K$ of an arrangement corresponds to an 
object $a$; the set $R^a$ consists of the coordinates of $R$ with respect to the basis 
$\BDelta^K$.
% where $K$ corresponds to $a$.

\begin{theorem}[see \cite{p-C10}]\label{thm_EquivalenceArrangementCartanscheme}
Let $\mathfrak A$ be the set of all crystallographic arrangements and
$\mathfrak C$ be the set of all connected simply connected Cartan \schemes for
which the real roots are a finite root system.
Then the map
\[ \mathfrak{C}/_{\cong} \rightarrow \mathfrak{A}/_{\cong}, \quad
\overline{\Cc =\Cc (I,A,(\rfl _i)_{i\in I},(\Cm ^a)_{a\in A})} \mapsto
\overline{(\{\alpha^\perp\mid \alpha \in R^a\},R^a)}, \]
where $a$ is any object of $\Cc$, is a bijection.
\end{theorem}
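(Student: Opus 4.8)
The plan is to construct an explicit inverse to the map $\Phi$ of the statement, $\overline{\Cc}\mapsto\overline{(\{\alpha^\perp\mid\alpha\in R^a\},R^a)}$. From a crystallographic arrangement $(\Ac,R)$ one builds a Cartan \scheme $\Psi(\Ac,R)$ as follows: take $A:=\Kc(\Ac)$ as object set; for a chamber $K$ with basis of simple roots $\BDelta^K=\{\alpha^K_1,\dots,\alpha^K_r\}\subseteq R$ as in Subsection~\ref{cryarr}, let $R^K\subseteq\ZZ^I$ be the set of coordinate vectors of the elements of $R$ in the basis $\BDelta^K$ --- this is contained in $\ZZ^I$ exactly because $(\Ac,R)$ is crystallographic --- set $\cm^K_{ij}:=-\max\{k\in\NN_0\mid k\alpha^K_i+\alpha^K_j\in R\}$ for $i\ne j$ and $\cm^K_{ii}:=2$, and let $\rfl_i(K)$ be the chamber adjacent to $K$ across the wall $(\alpha^K_i)^\perp$. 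I would then show that $(\Psi(\Ac,R),(R^K)_{K})$ is a connected, simply connected Cartan \scheme whose real roots form a finite root system, and that $\Phi$ and $\Psi$ descend to mutually inverse maps on equivalence classes.

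First I would check that $\Phi$ is well defined. Given a finite root system $\rsC\re(\Cc)$, the fact that $\Ac=\{\alpha^\perp\mid\alpha\in R^a\}$ is simplicial, with chambers in bijection with $\Ob(\Wg(\Cc))$ --- each being the open simplicial cone spanned by the basis dual to the corresponding $\BDelta$ --- rests on the structure theory of Weyl groupoids (the analogue of ``$W$ acts simply transitively on the chambers''), which I would import from \cite{p-HY-08,p-CH09a}; once this is in place the crystallographic condition $R^a\subseteq\sum_{\alpha\in\BDelta^K}\ZZ\alpha$ is immediate from the description of $R^a$ as a set of integer coordinate vectors together with (R3). That the arrangement does not depend on the chosen object and that equivalent Cartan \schemes give equivalent arrangements is clear, since any $w\in\Hom(a,b)$ is an element of $\Aut(\ZZ^I)$ with $w(R^a)=R^b$ by (R3), and such $w$ exists by connectedness.

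The substance of the argument is the well-definedness of $\Psi$. Axioms (M1), (M2) and (C1) are routine. The crucial geometric fact is that two chambers $K$ and $K':=\rfl_i(K)$ sharing the wall $(\alpha^K_i)^\perp$ satisfy $\alpha^{K'}_i=-\alpha^K_i$ and $\alpha^{K'}_j=\alpha^K_j-\cm^K_{ij}\alpha^K_i$, i.e.\ the wall crossing is given precisely by the reflection $\s^K_i$ of \eqref{fwg_eq:sia}; it is here, and essentially only here, that the crystallographic hypothesis is used (for a general simplicial arrangement the crossing need not be integral), and it yields at once (C2) and (R3), namely $\cm^K_{ij}=\cm^{K'}_{ij}$ and $\s^K_i(R^K)=R^{K'}$. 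Axioms (R1) and (R2) are clear; (R4) follows by restricting to the rank-$2$ parabolic sub-arrangement cut out by two walls of a chamber and invoking the (classically known) structure of finite rank-$2$ crystallographic arrangements, for which $(\rfl_i\rfl_j)^{m}$ fixes the chamber for the appropriate $m$. Each $R^K$ is finite because $R$ is, and $R^K=\rer{K}$ because every hyperplane of a simplicial arrangement is a wall of some chamber, which by connectedness of the chamber graph is joined to $K$ by a gallery, so its simple roots are images of $\BDelta^K$ under a product of the $\s_i$. Connectedness of $\Psi(\Ac,R)$ is connectedness of the chamber graph, and simple connectedness holds because the morphism attached to a gallery from $K$ to $K'$ is nothing but the change-of-basis map carrying $\BDelta^{K'}$-coordinates to $\BDelta^K$-coordinates, hence is the identity as soon as $K=K'$.

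Finally I would verify that $\Phi$ and $\Psi$ are mutually inverse. Applying $\Phi$ to $\Psi(\Ac,R)$ at the object $K$ returns $(\{\alpha^\perp\mid\alpha\in R^K\},R^K)$, which is just $(\Ac,R)$ read in the basis $\BDelta^K$, hence equivalent to it. Conversely, for a connected simply connected $\Cc$ with finite root system, the assignment sending an object $b$ to the chamber of $\Ac=\Phi(\Cc)$ whose simple roots form the corresponding $\BDelta$ is a bijection $\Ob(\Wg(\Cc))\to\Kc(\Ac)$ --- surjective because $\Ac$ is simplicial and $\Cc$ connected, injective because two objects mapping to the same chamber are joined by the identity morphism and hence equal by (C3) --- and under it the Cartan matrices, the maps $\rfl_i$ and the root sets of $\Psi(\Phi(\Cc))$ are carried to those of $\Cc$. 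I expect the main obstacle to be precisely the identification of the combinatorially defined reflection $\s^K_i$ with the geometric wall-crossing map --- the step where ``crystallographic'' is genuinely needed --- together with the ensuing verification of (R3), (R4) and $R^K=\rer{K}$; the structure-theoretic input behind the forward direction (simpliciality of the arrangement of a finite root system and the bijection chambers $\leftrightarrow$ objects) I would take from \cite{p-HY-08,p-CH09a}.
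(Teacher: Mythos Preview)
The paper does not give its own proof of this theorem: it is stated with the citation ``see \cite{p-C10}'' and no argument is supplied in the present text. Your proposal reconstructs precisely the strategy of \cite{p-C10}: build the inverse map by taking chambers as objects, reading off Cartan matrices from root strings in each chamber, and identifying the wall-crossing with the combinatorial reflection $\sigma_i^K$; then verify the root-system axioms and mutual inverseness. You have correctly isolated the one genuinely nontrivial step --- that for adjacent chambers $K,K'$ the change of basis $\BDelta^K\to\BDelta^{K'}$ is exactly $\sigma_i^K$, which is where the crystallographic axiom enters --- and your treatment of simple connectedness via the change-of-basis interpretation of galleries is the right one. The only places where your sketch remains a sketch are the verification of (R4) and the forward direction (simpliciality of the arrangement attached to a finite root system), both of which you appropriately defer to \cite{p-HY-08,p-CH09a}; this is also what \cite{p-C10} does.
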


A series of five papers by the first author and Heckenberger culminates in the following complete classification of finite Weyl groupoids and thus equivalently of crystallographic arrangements.

\begin{theorem}[see \cite{p-CH10}]\label{thm_ClassificationArrangments}
There are exactly three families of irreducible crystallographic arrangements:
\begin{enumerate}
\item The family of rank two parametrized by triangulations of
convex $n$-gons by non-intersecting diagonals.
\item For each rank $r>2$, arrangements of type $A_r$, $B_r$, $C_r$
and $D_r$, and a further series of $r-1$ arrangements denoted $\Ac^k_r(2)$ in \cite[6.4]{OT}.
\item Further $74$ ``sporadic'' arrangements of rank $r$, $3\le r \le 8$.
\end{enumerate}
\end{theorem}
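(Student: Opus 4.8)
The plan is to pass, via Theorem~\ref{thm_EquivalenceArrangementCartanscheme}, from finite Weyl groupoids to connected simply connected Cartan \schemes with finite real roots, equivalently to crystallographic arrangements, and to classify these by induction on the rank $r$. For a chamber $K$ (equivalently an object $a$) the combinatorial datum to track is the finite set $R^a\subseteq\ZZ^r$ of coordinates of the roots in the basis $\BDelta^K$; the axioms (R1)--(R4), simpliciality, and the crystallographic condition $R^a\subseteq\sum_{\alpha\in\BDelta^K}\ZZ\alpha$ tightly constrain how the sets $R^a$ attached to adjacent chambers transform into one another under the reflections $\s_i$, and irreducibility means that no $R^a$ splits as an orthogonal direct sum. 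The mechanism feeding the induction is restriction to a single hyperplane (the case $X=H$ of parabolic restriction): for every $H\in\Ac$ the restricted arrangement $\Ac^H$ is crystallographic of rank $r-1$, hence already classified.

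\emph{Rank two (the base case).} Up to equivalence a rank-two root system is encoded by a cyclic sequence of positive roots $\beta_1=\alpha_1,\beta_2,\dots,\beta_n=\alpha_2$, ordered by slope, with $\beta_{k-1}+\beta_{k+1}=c_k\beta_k$ for positive integers $c_k$; finiteness is equivalent to the continuant / continued-fraction identity that the corresponding product of $2\times2$ reflection matrices equals $\pm\id$. One then shows that an interior entry $c_k=1$ can always be contracted --- remove $\beta_k$ and decrement $c_{k-1}$ and $c_{k+1}$ --- which mirrors exactly the removal of an ear from a triangulation of a convex polygon; an induction on $n$ turns this into the bijection with triangulations of convex polygons announced in (1).

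\emph{Rank three (the heart of the argument).} The first step is an a priori bound on $|R^a|$ depending only on $r=3$: each of the finitely many hyperplanes carries a restricted rank-two crystallographic arrangement, which by the rank-two classification is controlled, and the way these links fit around a chamber, together with the crystallographic condition, bounds the number of positive roots. With such a bound the admissible Cartan \schemes form a finite --- though large --- set, which one enumerates, in practice by computer, pruning every branch that violates (C1), (C2) or (R1)--(R4); this yields $A_3$, $B_3$, $C_3$, the two members of the series $\Ac^k_3(2)$, and a finite list of sporadic arrangements of rank three. I expect the genuine difficulty to lie here: making the a priori bound small enough, and organising the search, so that it actually terminates.

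\emph{Rank $r\ge4$.} One proceeds by induction. Every corank-one parabolic sub-root-system (equivalently, restriction to a hyperplane) is crystallographic of rank $r-1$, hence by hypothesis of type $A$, $B$, $C$, $D$, of type $\Ac^k_{r-1}(2)$, or one of the finitely many sporadics of rank $\le8$; combined with an a priori bound on $|R^a|$ in terms of $r$, coming again from the already-classified links, this makes the set of rank-$r$ root systems finite, and for each of $r=4,5,6,7,8$ an exhaustive (computer-assisted) search seeded by the classification one rank lower produces $A_r$, $B_r$, $C_r$, $D_r$, the $r-1$ arrangements $\Ac^k_r(2)$, and the remaining sporadics. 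Finally, for $r\ge9$ one gives a purely structural argument: now every corank-one parabolic is classical (no sporadics of rank $\ge9$ exist by induction), and analysing how the corresponding root lattices and Cartan matrices are forced to glue along the walls of a chamber shows that the only irreducible possibilities are $A_r$, $B_r$, $C_r$, $D_r$ and the $r-1$ arrangements $\Ac^k_r(2)$, so sporadic arrangements occur only in ranks $3\le r\le8$. Assembling the three ranges gives exactly families (1)--(3); the principal obstacles are the low-rank exhaustive searches, most acutely in rank three, and, for the tail, proving that the rank-$\ge9$ gluing admits no further irreducible solutions.
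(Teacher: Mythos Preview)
The paper does not prove this theorem; it is quoted from \cite{p-CH10} (the culmination of a series of papers by Cuntz and Heckenberger), so there is no proof here to compare your proposal against.

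That said, your outline is broadly in the spirit of the cited classification --- rank two via continued fractions and polygon triangulations is correct, and the overall architecture (a priori bounds, exhaustive enumeration in low rank, a structural argument that nothing new appears beyond rank eight) matches. But there is a genuine confusion in your inductive mechanism. You write ``every corank-one parabolic sub-root-system (equivalently, restriction to a hyperplane)'', and earlier feed the induction by ``restriction to a single hyperplane''. These are \emph{not} the same operation: a rank-$(r-1)$ parabolic subsystem is a \emph{localization} $\Ac_X$ at a one-dimensional $X\in L(\Ac)$, i.e.\ a sub-root-system sitting inside $R$; the restriction $\Ac^H$ to a hyperplane is the quotient-type construction producing a new arrangement in $H$. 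Both happen to be crystallographic of rank $r-1$, but the present paper is precisely about how different they are.

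The actual proof in \cite{p-CH10} runs the induction through \emph{localizations} (parabolic subsystems), not restrictions: one bounds the Cartan-matrix entries and controls the rank-three residues, which are sub-root-systems and hence already classified. Knowing all hyperplane restrictions does not obviously let you reconstruct the arrangement, so your proposed mechanism as stated is at best incomplete. If you replace ``restriction to a hyperplane'' by ``rank-$(r-1)$ localization'' throughout, your sketch becomes an honest summary of the strategy in the cited work.
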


\begin{remark}
	Theorem \nref{thm_ClassificationArrangments} classifies simply-connected Cartan 
	\schemesn. Every Cartan \scheme has a simply-connected cover. For example the Cartan \scheme 
	of Lie type has a single Cartan matrix and the Weyl groupoid has a single object with 
	automorphism group the Weyl group $W$. The simply-connected cover has $|W|$ Cartan 
	matrices, all of the same type, and its Weyl groupoid has $|W|$ objects and no non-trivial automorphisms.\\
	Automorphism groups and minimal quotients of finite Weyl groupoids have all been determined in \cite{p-CH10}.
	Whether one can consider such non-simply-connected quotients depends on additional data 
	in the application.	For example, the Lie algebra $\sl_3$ resp.\ the Lie superalgebra $\sl(
	2|1)$ have the same associated arrangement $A_2$, but in the first case one usually 
	considers the Weyl group (one object), while in the latter case some roots are 
	labeled differently, so there are two types of chambers.   
\end{remark}

\section{Restrictions of arrangements and root systems}\label{sec_Restriction}

\begin{definition}[{\cite[1.12-1.14]{OT}}]
Let $\Ac$ be an arrangement in $V$.
We denote $L(\Ac)$ the set of all nonempty intersections of elements of $\Ac$.

For a subspace $X\le V$,
define a subarrangement $\Ac_X$ of $\Ac$ called the \emph{localization at} $X$ by
\[ \Ac_X = \{H\in\Ac\mid X\subseteq H\}. \]
Define an arrangement $\Ac^X$ in $X$ called the \emph{restriction to} $X$ by
\[ \Ac^X=\{X\cap H\mid H\in\Ac\backslash\Ac_X \mbox{ and } X\cap H\ne \emptyset\}.\]
\end{definition}

If $\Ac$ is a reflection arrangement of a Coxeter group $W$, then localizations 
of $\Ac$ are the reflection arrangements of parabolic subgroups of $W$. Thus 
localizations are easy to understand from the algebraic point of view. 
Restrictions $\Ac^X$ however are not reflection arrangements in general, even in 
the case when $X$ is in the intersection lattice $L(\Ac)$. 

However, the restrictions of 
a crystallographic arrangement to an element $X\in L(\Ac)$ is always again a
crystallographic arrangement. This yields an important class of examples:

\subsection{Parabolic restriction}\label{sec_ParabolicRestriction}

In this subsection we discuss the following case of restriction:

\begin{definition}\label{def_ParabolicRestriction}
	A \emph{parabolic restriction} of an arrangement $\Ac$ is a restriction $\Ac^X$ to an  		
	intersection of existing hyperplanes  $X\in L(\Ac)$.
\end{definition}

In contrast to parabolic localization $\Ac_X$ this corresponds to 
\emph{quotienting out} a parabolic subgroup and the result is in general not a 
reflection arrangement. Note that any parabolic restriction to an intersection 
of hyperplanes $X\in L(\Ac)$ can be obtained by repeatedly 
restricting to one hyperplane $H\in \Ac\subset L(\Ac)$.\\

It is an easy fact that (see \cite{p-CRT11}, \cite{p-BC10}):
 
\begin{lemma}\label{lm_ParabolicRestriction} Let $(\Ac,R)$ by a crystallographic 
	arrangement, then
	any parabolic restriction $\Ac^X$ to some $X\in L(\Ac)$ is again a crystallographic arrangement.\\
	More precisely, a root system $\bar{R}$ for $\Ac^X$ is given as follows: Suppose 
		without loss of generality $X\in \Ac$ a single hyperplane and a chamber $K$ chosen 		
		adjacient to $X$, say by suitable numbering $X=\alpha_1^\perp$. Then the restriction 
		of $\Ac$ to $X=\alpha_1^\perp$ is
		\[ \Ac^{\alpha_1^\perp} = \{ \beta'^\perp \mid \beta\in R,\:\: \beta\ne \alpha_1\}, \]
		where if $\beta = \sum_{\alpha\in \BDelta^K} b_\alpha\alpha$, then
		\[ \beta' = \frac{1}{\gcd(b_\alpha\mid \alpha_1\ne \alpha\in \BDelta^K)}\sum_{\alpha_1\ne 
		\alpha\in \BDelta^K} b_\alpha\alpha. \]
		Thus we obtain the restriction by deleting the coordinate to $\alpha_1$ and reducing 
		to the shortest vector in the lattice.  
\end{lemma}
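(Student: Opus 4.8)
The plan is to reduce the statement to a single‑hyperplane restriction and then verify everything by an explicit computation in the root lattice. First I would record the reduction. Writing $\Ac_X=\{H_1,\dots,H_m\}$, so that $X=H_1\cap\cdots\cap H_m$, one checks straight from the definitions the identity $\Ac^X=(\Ac^{H_1})^X$ inside $H_1$, and observes that inside the arrangement $\Ac^{H_1}$ the subspace $X$ is the intersection of the $m-1$ hyperplanes $H_i\cap H_1$, $i\ge 2$. Hence, once the single‑hyperplane case is established (which in particular shows $\Ac^{H_1}$ is crystallographic and again has chambers adjacent to $X$), an induction on $m=|\Ac_X|$ yields the general case. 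It therefore suffices to treat $X=\alpha_1^\perp\in\Ac$ with a chamber $K$ having $X$ as a wall; such a $K$ exists because a generic point of $X$ lies on no other hyperplane of $\Ac$.

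Next I would fix $\Delta^K=\{\alpha_1,\dots,\alpha_r\}$ and let $\Lambda$ be the $\ZZ$-span of $R$. The crystallographic axiom gives $\Lambda=\sum_i\ZZ\alpha_i$, so $\Lambda$ is independent of the chamber, and since every base $\Delta^C$ is an $\RR$-basis of $V^*$ contained in $R$, each $\Delta^C$ is in fact a $\ZZ$-basis of $\Lambda$. Identify $X^*$ with $V^*/\RR\alpha_1$ via restriction of linear forms; the image of $\Lambda$ is then the lattice $\bar\Lambda\cong\Lambda/\ZZ\alpha_1$ with $\ZZ$-basis $\bar\alpha_2,\dots,\bar\alpha_r$. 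For $\beta\in R$ with $\beta\notin\RR\alpha_1$, the vector $\beta'$ of the statement is exactly the primitive element of $\bar\Lambda$ on the ray $\RR_{>0}\bar\beta$, and since $\beta|_X$ is a positive integer multiple of $\beta'|_X$ one has $\beta^\perp\cap X=(\beta')^\perp$ as a hyperplane of $X$, whereas $\beta^\perp\in\Ac_X$ holds precisely for $\beta\in\RR\alpha_1$. So, setting $\bar R:=\{\pm\beta'\mid\beta\in R\setminus\RR\alpha_1\}\subseteq X^*$, we get $\Ac^X=\{\gamma^\perp\mid\gamma\in\bar R\}$ and $\RR\gamma\cap\bar R=\{\pm\gamma\}$ by primitivity; it remains to prove that $\Ac^X$ is simplicial and that the crystallographic inclusion holds at every one of its chambers.

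The main step is simpliciality. Let $C$ be an $\Ac$-chamber having $X$ as a wall; relabel $\Delta^C=\{\gamma_1,\dots,\gamma_r\}$ so that $\gamma_1^\perp=X$, i.e.\ $\gamma_1=\pm\alpha_1$. Since $R$ is partitioned as $R^C_+\cup(-R^C_+)$, any root $\beta\ne\pm\alpha_1$ has all its $\gamma_j$-coordinates of one sign and not all of the coordinates with $j\ge2$ vanish; hence the open simplicial cone $\langle\gamma_2^\vee,\dots,\gamma_r^\vee\rangle_{>0}=\operatorname{relint}(\bar C\cap X)$ meets no hyperplane of $\Ac^X$, while inside $X$ it is cut out by the open half-spaces bounded by the hyperplanes $\gamma_j^\perp\cap X\in\Ac^X$ ($j\ge2$); so it is a chamber of $\Ac^X$, with base the restrictions $\{\bar\gamma_2,\dots,\bar\gamma_r\}\subseteq\bar R$. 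Conversely, a nonzero point of any chamber $Y$ of $\Ac^X$ lies on $X$ and on no other hyperplane of $\Ac$, hence in the relative interior of the facet on $X$ of some $\Ac$-chamber $C$ (necessarily having $X$ as a wall), and then $Y=\operatorname{relint}(\bar C\cap X)$. Thus $\Ac^X$ is simplicial with bases exactly of the above form.

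Finally, the crystallographic inclusion is almost immediate: for $Y$ arising from $C$ as above, $\{\gamma_1=\pm\alpha_1,\gamma_2,\dots,\gamma_r\}$ is a $\ZZ$-basis of $\Lambda$, so $\sum_{j\ge2}\ZZ\bar\gamma_j=\Lambda/\ZZ\alpha_1=\bar\Lambda\supseteq\bar R$, which is exactly the required $\bar R\subseteq\sum_{\gamma\in\Delta^Y}\ZZ\gamma$. This shows $(\Ac^X,\bar R)$ is crystallographic, with $\bar R$ given by the displayed formula. The one genuinely geometric point — and the step where I would be most careful — is the simpliciality argument: that each $\operatorname{relint}(\bar C\cap X)$ really is a chamber of $\Ac^X$ and that these exhaust all chambers. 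The key input there is the base property $R=R^C_+\cup(-R^C_+)$ of simplicial arrangements, which forces the one‑sidedness of root coordinates that makes the restricted cones behave like chambers.
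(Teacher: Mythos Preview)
Your argument is correct and self-contained. The paper itself does not prove this lemma; it states the result as ``an easy fact'' and refers the reader to \cite{p-CRT11} and \cite{p-BC10} for the proof. So there is no approach in the paper to compare against, and what you have supplied is precisely what the paper omits: the reduction to a single hyperplane, the explicit identification of $\bar R$ via primitive vectors in $\bar\Lambda=\Lambda/\ZZ\alpha_1$, the verification of simpliciality by matching chambers of $\Ac^X$ with facets of $\Ac$-chambers lying on $X$, and the crystallographic inclusion via the fact that every $\Delta^C$ is a $\ZZ$-basis of the common root lattice $\Lambda$.

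One small remark on a point you flagged yourself: in the simpliciality step you use that every $\beta\in R$ has all $\Delta^C$-coordinates of one sign. This is indeed available here (and the paper uses it freely, e.g.\ when defining $R^K_+$); it follows because a root with mixed-sign coordinates would have $\beta^\perp$ meeting the open chamber $C$. With that in hand, your identification of the $\Ac^X$-chambers with the relative interiors $\operatorname{relint}(\bar C\cap X)$ for $C$ having $X$ as a wall is clean, and the observation that $\{\bar\gamma_2,\dots,\bar\gamma_r\}$ is simultaneously the base $\Delta^Y$ and a $\ZZ$-basis of $\bar\Lambda$ (hence each $\bar\gamma_j$ is primitive, so equals $(\gamma_j)'\in\bar R$) closes the argument.
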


\begin{example}\label{ex_res} Let $\alpha_1=(1,0,0)$, $\alpha_2=(0,1,0)$, $\alpha_3=(0,0,1)$.
\begin{enumerate}[a)]
\item Let
$R^a_+:=\{\alpha_1,\alpha_2,\alpha_3,(0,1,1),(1,1,0),(1,1,1)\}$,
and let $\Ac:=\{\alpha^\perp \mid \alpha \in R^a_+\}$ be the crystallographic arrangement of Lie type $A_3$. Then the restriction $\Ac^H$ of $\Ac$ to any hyperplane $H = \alpha_i^\perp$ is the crystallographic arrangement of type $A_2$ defined by
$\bar{R}^a_+ = \{(1,0),(0,1),(1,1)\}$. 
\item Let $R^a_+:=\{\alpha_1,\alpha_2,\alpha_3,(0,1,1),(0,1,2),(1,0,1),(1,1,1),(1,1,2)\}$,
and let $\Ac:=\{\alpha^\perp \mid \alpha \in R^a_+\}$ (which is not of Lie type). Then the restriction $\Ac^H$ of $\Ac$ to the hyperplane $H = \alpha_1^\perp$ is the crystallographic arrangement of Lie type $B_2$ (or $C_2$ depending on the used definition) defined by
$\bar{R}^a_+ = \{(1,0),(0,1),(1,1),(1,2)\}$.
\item Now let
\[ R^a_+ = \{ (0,0,1),(0,1,0),(0,1,1),(0,1,2),(1,0,0),(1,1,0),(1,1,1),(1,1,2),(1,2,2) \}, \]
$\Ac := \{\alpha^\perp \mid \alpha\in R^a_+\}$, thus $R^a$ is a root system of type $B_3$.
Choose $H:=(1,0,0)^\perp$. Then $\Ac^H$ is the set of kernels of
\[ \bar{R}^a_+ = \{ (0,1),(1,0),(1,1),(1,2) \}. \]
Looking more closely, we notice that there are several hyperplanes in $\Ac$ which restrict to the same element of $\Ac^H$. For example, three different roots map to the root $(1,1)$, namely (0,1,1),(1,1,1),(1,2,2). 
\end{enumerate}
\end{example}

We can see two effects at this last example:
\begin{enumerate}
\item It may be useful to keep track of the number of hyperplanes falling together under restriction. Thus instead of considering arrangements of hyperplanes, one should consider arrangements of hyperplanes with multiplicities (these are called \emph{multiarrangements}).
\item A-priori, the vectors with the deleted coordinate do not form a reduced root system, i.e.\ they may have the form $k\alpha$ for $k\in\ZZ\backslash\{\pm 1\}$ and $\alpha$ in the lattice. The reduction requires to rescale a root, for example $(1,2,2)\mapsto \frac{1}{2}(2,2)$.
\end{enumerate}
It would make sense to include the information of these two situations into the setting of Cartan \schemesn, but this would possibly make things more complicated than necessary. Notice that in the connected simply-connected case, crystallographic arrangements, Weyl groupoids, or Cartan \schemes are all uniquely determined by the roots at a single object, hence by a single set $R^a_+\subseteq \ZZ^r$.

\begin{definition}[compare \cite{p-CH10}]
We will say that a finite set $\Phi\subseteq\ZZ^r$
is a \textit{reduced root set of rank $r$} if there exists
a Cartan \scheme $\Cc$ of rank $r$ and an injective linear map $w : \ZZ^r\rightarrow\ZZ^r$
such that $w(\rer a)=\Phi$ for some object $a$.
\end{definition}

\begin{definition}
A \emph{(nonreduced) root set of rank $r$} is a set $\Phi\subseteq \ZZ^r$ such that
\[ \left\{ \frac{1}{\gcd(a_1,\ldots,a_r)}\alpha \mid \alpha=(a_1,\ldots,a_r) \in \Phi \right\} \]
is a reduced root set.
%The \emph{simple roots} of $\Phi$ are the simple roots of the root system at 
%the object of the Cartan \scheme given by the definition of `root set'.

A \emph{root multiset} is a root set $\Phi$ together with a map $\Phi \rightarrow \NN$, $\alpha\mapsto m_\alpha$.
\end{definition}

\subsection{All parabolic restrictions of finite root systems}
By Theorem \nref{thm_EquivalenceArrangementCartanscheme} the root systems
are in $1:1$ correspondence with crystallographic arrangements.
Thus Theorem \ref{thm_ClassificationArrangments} is a complete classification of simply-connected Cartan \schemesn.

In the following we compute all restrictions to existing hyperplanes (parabolic restrictions, see
Definition \nref{def_ParabolicRestriction}) of all root systems in
terms of their arrangement:

\begin{theorem}\label{thm_AllRestrictions} In rank three, the \emph{only} 
crystallographic arrangements which are parabolic 
restrictions from higher dimensional crystallographic arrangements are those of 
the infinite series $A_3,B_3,C_3, \Ac_3^{1}(2),\Ac_3^{2}(2)$ and those sporadic 
arrangements labeled
\[ 1, 2, 3, 6, 7, 8, 9, 13, 14, 15, 20, 23 \]
in \cite{p-CH10}.
From rank four to rank eight, \emph{all} crystallographic arrangements except 
the sporadic reflection arrangements $E_6$, $E_7$, and $E_8$ are restrictions 
of higher dimensional reflection arrangements.\newline
Every crystallographic arrangement of rank greater than $8$ is restriction of a 
reflection arrangement of an infinite series (i.e.\ $A_n,B_n,C_n,D_n$).
\end{theorem}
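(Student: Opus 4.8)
The plan is to prove this by a classification argument. The two inputs are the complete classification of crystallographic arrangements (Theorem~\nref{thm_ClassificationArrangments}) and the explicit combinatorial description of parabolic restrictions in Lemma~\nref{lm_ParabolicRestriction}; the infinite series will be handled by writing down concrete restrictions, and the bounded-rank cases by a finite computation that is matched back against the classification. The first reduction is this: any parabolic restriction $\Ac^X$, $X\in L(\Ac)$, is obtained by restricting repeatedly to single hyperplanes, and by Lemma~\nref{lm_ParabolicRestriction} every intermediate arrangement is again crystallographic, hence simplicial, hence essential, so the rank drops by exactly one at each step. Therefore an irreducible crystallographic arrangement of rank $r$ occurs as a parabolic restriction of some crystallographic arrangement of strictly larger rank if and only if it occurs as $\Cc^{\alpha^\perp}$ for some crystallographic arrangement $\Cc$ of rank $r+1$ and some hyperplane $\alpha^\perp$ of $\Cc$. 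By Lemma~\nref{lm_ParabolicRestriction} the arrangement $\Cc^{\alpha^\perp}$ is computed purely combinatorially from a set of roots $R$ of $\Cc$ (delete the $\alpha$-coordinate of each $\beta\in R$ with $\beta\neq\pm\alpha$ and rescale to the primitive lattice vector), and its isomorphism type inside Theorem~\nref{thm_ClassificationArrangments} is pinned down by a complete invariant, e.g.\ the Cartan graph reconstructed from the resulting root set as in Example~\nref{cswg_ex}, equivalently the $\Aut(V^*)$-class of that root set (Theorem~\nref{thm_EquivalenceArrangementCartanscheme}).

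For \emph{rank three} one then enumerates the irreducible crystallographic arrangements of rank four --- by Theorem~\nref{thm_ClassificationArrangments} these are $A_4$, $B_4=C_4$, $D_4$, $\Ac_4^1(2)$, $\Ac_4^2(2)$, $\Ac_4^3(2)$ and the finitely many sporadic ones, all available explicitly --- computes for each of them and each of its hyperplanes the restriction by the recipe above, and reads off which rank-three crystallographic arrangement results. This finite calculation yields exactly $A_3$, $B_3=C_3$, $\Ac_3^1(2)$, $\Ac_3^2(2)$ and the sporadic arrangements $1,2,3,6,7,8,9,13,14,15,20,23$; by the reduction above this set is precisely the set of rank-three crystallographic arrangements arising as parabolic restrictions of higher-dimensional crystallographic arrangements, which is the first assertion. (Concrete realisations of $7$ and $13$ are those in Example~\nref{exm_newWeylGroupoid}.)

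For the \emph{higher ranks} I would first dispose of the infinite series by explicit restrictions in $\RR^n$ with standard coordinates: $A_r$ is the restriction of $A_{r+1}$ to a hyperplane (Example~\nref{ex_res}(a)); $B_r$, which is the same arrangement as $C_r$, is the restriction of $B_{r+1}$ to $x_{r+1}^\perp$; and for $1\le k\le r$ the arrangement $\Ac_r^k(2)$ --- which is $D_r$ together with $k$ coordinate hyperplanes --- is the restriction of $D_{r+k}$ to the flat obtained by identifying $k$ disjoint pairs of coordinates, since identifying a pair $x_i=x_j$ turns the hyperplane $\{x_i=-x_j\}$ into the coordinate hyperplane of the merged coordinate while merging the remaining type-$D$ hyperplanes in pairs. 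As restriction commutes with products and $D_r$ is itself a reflection arrangement of an infinite series, and since by Theorem~\nref{thm_ClassificationArrangments} in rank greater than eight there are no further irreducible crystallographic arrangements, this settles the rank-greater-than-eight assertion. For ranks four to eight there remain $D_r$ and the sporadic arrangements (among them $F_4$). The finiteness of this check rests on the structural fact that every restriction of a reflection arrangement of type $A_n$, $B_n$ or $D_n$ is again a product of members of the infinite series --- proved by inspecting the intersection lattices (the partition lattice for $A_n$, the signed-partition lattices for $B_n$, $D_n$), the $D_n$ case being the delicate one since every nontrivial restriction of $D_n$ already contains a coordinate hyperplane. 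Hence a sporadic crystallographic arrangement (or $D_r$, $4\le r\le 8$) can only be a proper restriction of one of the finitely many reflection arrangements of rank at most eight, namely $A_n$, $B_n$, $D_n$ for $n\le 8$ and $F_4$, $E_6$, $E_7$, $E_8$; one computes all their restrictions (again as iterated single-hyperplane restrictions, crystallographic by Lemma~\nref{lm_ParabolicRestriction}), identifies each via its Cartan graph, and compares with Theorem~\nref{thm_ClassificationArrangments}, obtaining that all crystallographic arrangements of rank four to eight except $E_6$, $E_7$, $E_8$ occur and that these three do not occur as proper restrictions of larger reflection arrangements.

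The main obstacle is the size and the bookkeeping of this last computation: one has to run through all restrictions of $E_6$, $E_7$, $E_8$ (with $36$, $63$, $120$ hyperplanes and many flats) and identify each resulting simplicial arrangement reliably against the $74$ sporadic arrangements, for which a genuine complete invariant --- the Cartan graph of the associated root system --- is needed. The one step that is not a mechanical calculation is the structural claim that restrictions of the infinite-series reflection arrangements remain within the infinite series (without which the search space would be infinite); establishing this for $D_n$, i.e.\ showing that any proper restriction of $D_n$ automatically acquires a coordinate hyperplane, is the part that requires an actual argument.
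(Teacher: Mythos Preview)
Your approach is essentially the same as the paper's: both rest on the classification (Theorem~\nref{thm_ClassificationArrangments}) together with the explicit restriction recipe of Lemma~\nref{lm_ParabolicRestriction}, and both treat the infinite series by hand and the finitely many remaining cases by direct computation. The paper's own proof is extremely terse --- it simply says the rank-three and rank four--to--eight assertions are ``straightforward calculation performed by the computer using the classification'', and for rank greater than eight it cites \cite[Table~6.2]{OT} for the fact that every $\Ac_\ell^k(2)$ is a restriction of some $D_{\ell'}$.

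What you add beyond the paper is a clean justification of \emph{why} the computer search is finite. Your reduction ``a rank-$r$ arrangement arises as a parabolic restriction from higher rank iff it arises from some rank-$(r{+}1)$ crystallographic arrangement'' (via iterated single-hyperplane restrictions, each crystallographic by Lemma~\nref{lm_ParabolicRestriction}) is exactly what is needed to make the rank-three claim a finite check, and the paper leaves this implicit. Likewise, your structural observation that every restriction of a classical reflection arrangement $A_n,B_n,D_n$ is again a product of members of the infinite series (via the partition / signed-partition description of their intersection lattices, as in \cite[\S6.4]{OT}) is the right way to bound the search for the rank four--to--eight statement and in particular to argue that $E_6,E_7,E_8$ do not occur as proper restrictions; the paper does not spell this out. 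Your explicit description of $\Ac_r^k(2)$ as a restriction of $D_{r+k}$ is just a concrete realisation of what the paper takes from \cite[Table~6.2]{OT}. So: same strategy, with you filling in the finiteness arguments that the paper suppresses.
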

\begin{proof}
The first two assertion are straightforward calculation performed by the computer using the classification, see Theorem \nref{thm_ClassificationArrangments}. The last assertion is proven as follows:
Every crystallographic arrangement in dimension greater than $8$ is either a 
reflection arrangement (Weyl arrangement) or an arrangement denoted 
$\Ac^{k}_{\ell}(2)$ in \cite[6]{OT}. But according to [OT92,Table 6.2], 
$\Ac^{k}_{\ell}(2)$ is the restriction of an arrangement $\Ac^{0}_{\ell'}$ for 
some $\ell'$ large enough, and $\Ac^{0}_{\ell'}$ is the arrangement of type 
$D_{\ell'}$. 
\end{proof}

\subsection{Folding restriction}

A second important source of examples is as follows: We say that a finite group $G$ acts on an arrangement $\Ac$ if $G$ acts on $V$ such that $g.\Ac=\Ac$ for any $g\in G$. Moreover we say $G$ acts on a crystallographic arrangement $(\Ac,R)$ if $g.R=R$ for all $g\in G$, where
$g$ acts on $V^*$ by $(g.\alpha)(v) = \alpha(g^{-1}(v))$ for $\alpha\in V^*$. \\
If there exists a chamber $K$ where $G$ permutes the simple roots, we call the action a \emph{permutation action} and it is equivalent to a permutation action of $G$ on the Dynkin diagram resp.\ Cartan matrix of $R$ in $K$.

\begin{definition}
		A \emph{folding restriction} of an arrangement $\Ac$ with an action of $G$ is the 
		restriction $\Ac^X$ to the subspace $X=V^G$ of fixed points. We similarly define a  
		folding restriction and a permutation restriction of a crystallographic arrangement 
		$(\Ac,R)$.
\end{definition}

In the sequel we will use the following notation. If $(\Ac,R)$ is crystallographic and $K$ is a chamber of $\Ac$, then
\[ R_+^K := \left\{ \alpha\in R \mid \alpha \in \sum_{\gamma\in \BDelta^K} \NN_0 \gamma \right\}. \]
Thus $R=R_+^K\dot\cup -R_+^K$ for every $K$. The sets $R_+^K$ should not be confused with the positive roots $R_+^a$ at an object of a Weyl groupoid. If $a$ is the object corresponding to the chamber $K$, then $R^a\subseteq \ZZ^r$ is the set of coordinate vectors with respect to $\BDelta^K$.

We show the following easy characterization which generalizes the approach in 
\cite{Ar62} for Lie type arrangements to classify real Lie algebras:

\begin{lemma}\label{lm_folding} Let $g$ be an involutive automorphism of the crystallographic arrangement $(\Ac,R)$. Then the folding restriction can be decomposed into two steps: First a parabolic restriction of $\Ac$ to $X_1\in L(\Ac)$ where $X_1$ is a suitable subspace invariant under $g$, then by a permutation restriction of $\Ac^{X_1}$ with respect to a suitable chamber $K_2$.
\end{lemma}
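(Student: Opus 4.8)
The plan is to split the fixed space $X=V^g$ by separating the hyperplanes of $\Ac$ into those containing $X$ and those that merely meet $X$ in a proper subspace. Since $g$ is an involution, $V$ decomposes $g$-equivariantly as $V = V^g \oplus V^{-}$, where $V^{-}$ is the $(-1)$-eigenspace; dually $V^* = (V^*)^g \oplus (V^*)^{-}$. A hyperplane $H=\alpha^\perp\in\Ac$ contains $X=V^g$ precisely when $\alpha$ annihilates $V^g$, i.e.\ when $\alpha\in (V^*)^{-}$. Let $X_1$ be the intersection of all hyperplanes in $\Ac$ containing $X$; equivalently $X_1 = \bigcap_{\alpha\in R\cap (V^*)^{-}}\alpha^\perp$. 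Then $X_1\in L(\Ac)$ by construction, and $X\subseteq X_1$, and $g$ fixes $X_1$ setwise because $g$ fixes $R$ and the spanning set $R\cap(V^*)^{-}$ is $g$-stable (in fact $g$ acts as $-\id$ on it, so $g(X_1)=X_1$). This gives the first step: the parabolic restriction $\Ac^{X_1}=\Ac_1$, which by Lemma~\nref{lm_ParabolicRestriction} is again crystallographic with root system $R_1$ obtained by deleting the coordinates along $\BDelta^K\cap (V^*)^{-}$ for a suitable chamber $K$ adjacent to $X_1$ (one chooses $K$ so that $\BDelta^K$ refines the decomposition $X_1$-hyperplanes vs.\ the rest, which is possible since $X_1\in L(\Ac)$).

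For the second step I would show that, after this parabolic restriction, $g$ descends to an automorphism $\bar g$ of $(\Ac_1,R_1)$ on the space $X_1$, and that $\bar g$ acts as a \emph{permutation} on the simple roots of a suitable chamber. Descent is clear: $g$ preserves $X_1$ and $R$, hence it preserves the restricted arrangement and maps $X_1\cap H$ to $X_1\cap g(H)$, inducing a linear automorphism of $X_1$ preserving $R_1$; moreover $V^{\bar g}= X_1\cap V^g = V^g = X$, because $X\subseteq X_1$ already and every $g$-fixed vector lies in $X$. So $\Ac_1^{X}$ is exactly the folding restriction of $\Ac_1$ by $\bar g$, and it remains to exhibit a chamber $K_2$ of $\Ac_1$ on whose simple roots $\bar g$ acts by permutation. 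Here the point is that, by the defining property of $X_1$, the automorphism $\bar g$ of $\Ac_1$ has \emph{no} nontrivial reflection fixing its full eigenspace decomposition: i.e.\ no hyperplane of $\Ac_1$ contains $X=X^{\bar g}$, since we have already restricted away all such hyperplanes. An automorphism of a crystallographic arrangement whose $(-1)$-part of the dual contains no root must permute the walls of some chamber — one picks $K_2$ to be a chamber contained in a fundamental domain for $\langle\bar g\rangle$ on $X_1$, more precisely a chamber adjacent to $X=V^{\bar g}$ on the "positive side," and then $\bar g$ sends each wall of $K_2$ to another wall of $K_2$.

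The hard part will be the last claim: producing the chamber $K_2$ on which $\bar g$ is genuinely a permutation of simple roots, rather than a more general lattice automorphism. The subtlety is that an involution can act on a chamber's walls as a composition of a diagram permutation with a sign change on some coordinates; the role of having first restricted to $X_1$ is to kill exactly those roots $\alpha\in(V^*)^{-}$ responsible for the sign-change behaviour, so that on $\Ac_1$ only the permutation part survives. I would argue this by taking any $\bar g$-invariant point $v$ in the interior of the closure of the fixed locus region, perturbing it slightly into a chamber $K_2$, and checking that $\bar g(\BDelta^{K_2})\subseteq\BDelta^{K_2}$: a wall $\beta^\perp$ of $K_2$ meets $X$ in a wall of the induced chamber structure on $X$ exactly when $\beta\notin(V^*)^{\bar g}$-perp issues don't arise, and $\bar g$ permutes these walls because it fixes $v$. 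One must also verify the normalisation: the simple roots of $K_2$ are sent to simple roots (not positive multiples thereof), which follows from $\bar g$ being a lattice automorphism of finite order preserving $R_1$ together with axiom (R2). Modulo this wall-permutation lemma — which is essentially the statement that a finite-order automorphism of a crystallographic arrangement fixing no hyperplane is, up to choice of chamber, a diagram automorphism — the decomposition $\Ac^X = (\Ac^{X_1})^{X}$ with the two steps as described follows directly from the transitivity of restriction, $\Ac^X=(\Ac^{X_1})^{X}$ for $X\subseteq X_1\subseteq V$, which is immediate from the definitions.
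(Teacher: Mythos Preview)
Your overall strategy is correct and takes a genuinely different route from the paper. The paper proceeds in the opposite order: it first selects a chamber $K$ by minimizing $|g.R^K_+\cap -R^K_+|$, shows that each simple root $\alpha\in\Delta^K$ satisfies either $g.\alpha\in R^K_+$ or $g.\alpha=-\alpha$, sets $\Delta_1:=\{\alpha\in\Delta^K:g.\alpha=-\alpha\}$ and $X_1:=\Delta_1^\perp$, and then observes that modulo $\RR\Delta_1$ the maps $g,g^{-1}$ become mutually inverse nonnegative integer matrices on $\Delta_2:=\Delta^K\setminus\Delta_1$, hence a permutation. This immediately yields the Satake-type picture (black nodes $\Delta_1$, arrows on $\Delta_2$) that the paper exploits afterwards. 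Your approach instead defines $X_1$ intrinsically as the smallest element of $L(\Ac)$ containing $X=V^g$ and defers the choice of chamber to the restricted arrangement. The two $X_1$'s coincide, and your version is more conceptual, though it does not directly hand you the diagram.

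There is, however, a genuine imprecision in your handling of the step you flag as hard. The language of ``perturbing'' a fixed point off $X$, and of choosing $K_2$ inside a fundamental domain for $\langle\bar g\rangle$, works against you: perturbing off $X$ destroys $\bar g$-invariance of the point, and a chamber lying in a fundamental domain is typically \emph{not} $\bar g$-stable. The correct argument is simpler and is precisely what your construction of $X_1$ has purchased. By design no hyperplane of $\Ac^{X_1}$ contains $X$ (one checks $\Ac_X=\Ac_{X_1}$), so a generic point $v\in X$ already lies in the open interior of some chamber $K_2$ of $\Ac^{X_1}$; no perturbation is needed. From $\bar g(v)=v$ and the fact that $\bar g$ permutes chambers you get $\bar g(K_2)=K_2$, hence $\bar g$ permutes the walls of $K_2$ and preserves the positive roots at $K_2$; together with (R2) and $\bar g$ being a lattice automorphism this forces $\bar g(\Delta^{K_2})=\Delta^{K_2}$ as a set of roots, not just up to scalars. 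With this replacement your argument is complete.
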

\begin{proof}
The proof yields $X_1,a_2$ and the explicit permutation action, and provides an efficient diagrammatic description of the possible actions of cyclic groups on crystallographic arrangements (comparable to Satake diagrams in \cite{Ar62}):

\begin{enumerate}[a)]
	\item Let $K$ be a chamber, such that $|g.R^K_+\cap -R^K_+|$ is minimal, which 	
		surely exists. Denote $\Delta=\BDelta^K$ the positive simple roots at $K$.
	\item If $\alpha\in\Delta$, then $g.\alpha\in R^K_+$ or $g.\alpha = -\alpha$: Otherwise, consider the chamber $K'$ adjacent to $K$ with $R^{K'}_+=(R^K_+\backslash\{\alpha\})\cup\{-\alpha\}$. If $g.\alpha=-\beta\in-R^{K}_+$ and $\beta\ne\alpha$, then
                $$|g.R^{K'}_+\cap -R^{K'}_+| = |g.R^{K}_+\cap -R^{K}_+|-1, $$
                which contradicts the assumed minimality. Note that we use here that $g$ is involutive, such that $g.\alpha=-\beta$ also imples $g^{-1}.\alpha=-\beta$, so no root in $R^K_+$ maps to $\alpha$. 
        \item Let $\Delta_1:=(-g^{-1}.\Delta) \cap \Delta$, thus the set of $\alpha\in\Delta$ with $g.\alpha=-\alpha$.
              Define $X_1:=\Delta_1^\perp\in L(\Ac)$. Since $g.\alpha=-\alpha$ for $\alpha\in\Delta_1$, $X_1$ is invariant under $g$.
              Further, if $v\in X = V^G$, then $g.v=v$. Hence $\alpha(v)=(-\alpha)(v)$ for all $	\alpha\in\Delta_1$, i.e.\ $X\subseteq X_1$.
        \item Consider the partition $\Delta:=\Delta_1\cup\Delta_2$, in particular $g.\Delta_2\subset R^K_+$. We claim that there is a permutation 
                $g(\_)$ of the set $\Delta_2$, such that $g.\alpha_i=\alpha_{g(i)}+\NN\Delta_1$. 
                Indeed, modulo $\RR\Delta_1$ the actions of $g,g^{-1}$ are inverse positive integer 
                matrices, hence permutation matrices.
	\item Consider the parabolic restriction $\Ac^{X_1}$ and let $K_2:= K \cap X_1$ be the 
		respective chamber, then by the above, $g$ acts as a permutation on the 
		crystallographic arrangement $\Ac^{X_1}$ and the full restriction to $X=V^G\subseteq X_1$ is 
		hence a permutation restriction.	   
\end{enumerate}
\end{proof}
\begin{remark}
	Even if $(\Ac^X,\bar{R})$ is not crystallographic, it could be crystallographic with 
	respect to a different choice of roots.
\end{remark}
	It is \emph{not} true that every parabolic restriction can be obtained from a folding 
	restriction for some suitable automorphism $g$:
\begin{lemma}
	For $J\subset I$, there exists an automorphism $g$, such that parabolic restriction to $X=J^
	\perp$ coincides with folding restriction by $g$, \emph{if and only if} the permutation automorphism 
	(i.e.\ diagram automorphism) 
	$f_J:=-w_J$ (on the parabolic subsystem generated by $J$ with $w_J$ the longest element), 
	together with the identity permutation on all simple roots $I\backslash J$ is a permuatation 
	automorphism for the entire arrangement.   
\end{lemma}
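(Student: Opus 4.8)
The plan is to combine Lemma~\ref{lm_folding} with an explicit correspondence — given by ``composition with the parabolic longest element'' — between involutive foldings $g$ realising the parabolic restriction to $J^\perp$ and automorphisms of $(\Ac,R)$ realising the permutation $\tilde f_J:=f_J\sqcup\id_{I\setminus J}$. We may assume throughout that $g$ is an involution, which is the case relevant to Lemma~\ref{lm_folding} and to Satake diagrams.

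\emph{Step 1: reformulation via Lemma~\ref{lm_folding}.} Fix a chamber $K$ with $\BDelta^K=\{\alpha_i\}_{i\in I}$ and dual basis $\{\alpha_i^\vee\}$, so that $J^\perp=\bigcap_{j\in J}\alpha_j^\perp=\sum_{i\in I\setminus J}\RR\alpha_i^\vee$, and set $R_J:=R\cap\sum_{j\in J}\RR\alpha_j$. I claim an involutive $g\in\Aut(\Ac,R)$ has $V^g=J^\perp$ (equivalently, the folding restriction by $g$ equals the parabolic restriction to $J^\perp$) \emph{if and only if}, at a suitable chamber, $g.\alpha_j=-\alpha_j$ for $j\in J$ and $g.\alpha_i\in\alpha_i+\sum_{j\in J}\NN_0\alpha_j$ for $i\in I\setminus J$. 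For ``$\Leftarrow$'': using $g=g^{-1}$, a vector $v$ lies in $V^g$ iff $(g.\alpha_i)(v)=\alpha_i(v)$ for all $i$; for $i=j\in J$ this forces $\alpha_j(v)=0$, and for $i\notin J$ the remaining equations then hold automatically since $g.\alpha_i-\alpha_i\in\sum_{j\in J}\RR\alpha_j$, whence $V^g=J^\perp$. For ``$\Rightarrow$'': apply Lemma~\ref{lm_folding} at a chamber minimising $|g.R_+^K\cap(-R_+^K)|$ to obtain $\Delta_1=\{\alpha\in\BDelta^K\mid g.\alpha=-\alpha\}$ with $V^g\subseteq\Delta_1^\perp$ and the remaining restriction a permutation restriction on $\Delta_2=\BDelta^K\setminus\Delta_1$; since $\dim V^g$ equals the number of orbits of that permutation while $\dim J^\perp=|I|-|J|$, and $\Delta_1\subseteq\sum_{j\in J}\RR\alpha_j$ (because $V^g\subseteq\Delta_1^\perp$), there is room only for $\Delta_1=J$ and a trivial permutation, which after relabelling is the asserted form. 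Hence the statement to prove is equivalent to: \emph{such a $g$ exists iff the (unique) linear map $\psi_0\in\GL(V^*)$ with $\psi_0.\alpha_j=\alpha_{f_J(j)}$ $(j\in J)$ and $\psi_0.\alpha_i=\alpha_i$ $(i\notin J)$ preserves $R$}, which is precisely the assertion that $\tilde f_J$ is a permutation automorphism of the entire arrangement.

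\emph{Step 2: the correspondence.} Let $w_J$ be the longest element of the Weyl groupoid of $R_J$, realised as the product $s_{j_1}\cdots s_{j_l}$ of the reflections $s_j:\alpha_k\mapsto\alpha_k-c^K_{kj}\alpha_j$ indexed by $J$; then $w_J^2=\id$, $w_J|_{\sum_{j\in J}\RR\alpha_j}=-f_J$ (so $w_J.\alpha_j=-\alpha_{f_J(j)}$), and $w_J.\alpha_i=\alpha_i+\nu_i$ with $\nu_i\in\sum_{j\in J}\NN_0\alpha_j$ for $i\notin J$. From $w_J^2=\id$ applied to $w_J.\alpha_i$ one gets $w_J.\nu_i=-\nu_i$, hence (as $w_J=-f_J$ on $\sum_{j\in J}\RR\alpha_j$) $f_J$ fixes $\nu_i$; therefore $\psi_0$ and $w_J$ commute. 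If $\psi_0$ preserves $R$, put $g:=\psi_0\circ w_J$: then $g.\alpha_j=-\alpha_j$, $g.\alpha_i=\alpha_i+\psi_0(\nu_i)\in\alpha_i+\sum_{j\in J}\NN_0\alpha_j$, $g^2=\psi_0^2w_J^2=\id$, and $g\in\Aut(\Ac,R)$ once $w_J$ is known to preserve $R$ (Step 3); so by Step 1, $g$ is a folding with $V^g=J^\perp$. Conversely, given a folding $g$ as in Step 1, set $\psi:=g\circ w_J\in\Aut(\Ac,R)$; then $\psi.\alpha_j=g(-\alpha_{f_J(j)})=\alpha_{f_J(j)}$, and $\psi.\alpha_i=g.\alpha_i-\nu_i$ is a root congruent to $\alpha_i$ modulo $\sum_{j\in J}\ZZ\alpha_j$. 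As $\alpha_i$ is the \emph{only} simple root congruent to $\alpha_i$ modulo that lattice, it suffices to see that $\psi$ fixes the chamber $K$ — equivalently that $g.\alpha_i=w_J.\alpha_i$ for $i\notin J$ — which holds because $g(K)$, a chamber having $\alpha_j^\perp$ $(j\in J)$ among its walls on the side $\alpha_j<0$ and all other simple roots positive and congruent to the $\alpha_i$ modulo $\sum_{j\in J}\ZZ\alpha_j$, must be the $J$-opposite chamber $w_J(K)$. Then $\psi=\psi_0$, so $\psi_0\in\Aut(\Ac,R)$ and $\tilde f_J$ is a permutation automorphism, as required.

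\emph{Step 3: the main obstacle.} The non-formal part is the control of $w_J$ as a linear map on $V^*$: that $s_{j_1}\cdots s_{j_l}$ is independent of the reduced word, is an involution, restricts to the genuine longest element on $\sum_{j\in J}\RR\alpha_j$, moves the remaining simple roots through positive roots, and — crucially — \emph{preserves the whole root set $R$}. In the Lie case all of this is automatic, but for a general crystallographic arrangement the individual $s_j$ need not preserve $R$; the required fact amounts to saying that the $J$-opposite chamber carries the Cartan matrix obtained from $C^K$ via $f_J$, and it also underlies the chamber-uniqueness used in the converse. I expect it to follow from the standard combinatorics of the localisation $\Ac_{J^\perp}$ (which is again crystallographic): $W_J$ permutes $R_+^K\setminus(R_J\cap R_+^K)$ and its longest element induces the opposition $f_J$ of $R_J$. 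Making these statements precise outside the reflection-arrangement setting is where the real work lies; the remainder is the bookkeeping carried out above.
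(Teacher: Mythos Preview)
Your approach matches the paper's: both directions go via composition with the parabolic longest element (the paper writes $g:=w_Jf$ and conversely $f:=w_J^{-1}g$; your $\psi_0$ is their $f$), and both invoke Lemma~\ref{lm_folding} to force $\Delta_1=J$ with trivial residual permutation on $\Delta_2$. Your treatment is more careful --- you verify involutivity, compute $V^g$ directly, and check that $\psi_0$ and $w_J$ commute --- but the architecture is identical.

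Your Step~3 correctly names the one substantive point: that $w_J$, realised as a linear map on $V^*$ via the basis $\BDelta^K$, preserves $R$. The paper's own proof relies on precisely this (it simply declares $g:=w_Jf$ to be ``the automorphism'' and later that $w_J^{-1}g$ is a permutation automorphism), so you have not introduced a gap the paper avoids; you have made visible the one it leaves implicit. In a general crystallographic arrangement an individual $\sigma_j^a$ sends $R^a$ to $R^{\rho_j(a)}$, which need not equal $R^a$, so the assertion that the composite $w_J$ acts as an automorphism of $(\Ac,R)$ genuinely requires the localisation/chamber argument you sketch.
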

Take as counterexample $A_2\subset A_3$ or also $A_2\subset A_4$. The condition 
is however always fulfilled for parabolic restrictions to one hyperplane $H\in 
\Ac$.
\begin{proof}
Let $f$ be such an extension of $f_J=-w_J$ and consider the automorphism $g:=w_Jf$. On the subsystem $R_J^K$ it acts as $-\id$, while on the remaining positive roots it acts by adding terms in $R_{J,+}^K$ i.e.\ as the identity modulo $R_J^K$. We now apply Lemma \ref{lm_folding}: We see that for a simple root $\alpha_i\in\Delta^K$, by construction we have $g.\alpha\in R^K_+$ for $i\in I\backslash J$ or $g.\alpha_i = -\alpha_i$ for $i\in J$. Hence $\Delta_1=J$ and thus the first parabolic restriction $X_1=J^\perp$ is the parabolic restriction in question. Furthermore, since $g$ acts on the remaining simple roots by identity modulo $R_J^K$, the second permutation restriction is trivial.

Vice-versa, for any automorphism $g$ by Lemma \ref{lm_folding} we have a chamber $K$ and a set of simple roots $\Delta_1\subset \Delta$ with the property $g.\alpha_i = -\alpha_i$, and a permutation $\pi$ on the other roots $\Delta\backslash\Delta_1$ such that $g.\alpha\in\pi(\alpha)+R^K_{+,J}$. The folding restriction by $g$ is a parabolic restriction to $\Delta_1$ and then a permutation restriction.

Since it needs to coincide by assumption with parabolic restriction to $J$, we have $J\subset \Delta$, moreover $J=\Delta$ and $\sigma=\id$. We can consider $w_J^{-1}g$, which is a permutation automorphism $f$. On the subsystem $J$, $f$ is equal to $-w_J=:f_J$ and on the remaining roots the identity, as claimed.
\end{proof}

\begin{remark}Compare the condition of this Lemma to the condition fulfilled by the quotient root systems used for parabolic induction of cuspidal representations in Lusztig's character theory of finite Lie groups.
\end{remark}

Note that the permutation induces a permutation automorphism of root systems already on the localization to $\Delta_2$, in particular a diagram automorphism of the sub-Dynkin diagram. One may hence enumerate all possible automorphism by listing root systems, where for some object $a$, $R^a$ has a symmetric sub-diagram (and say color all nodes in $\Delta_1$ black) as done for Satake diagrams. Alternatively (and maybe more feasible for us) one may compute all parabolic restrictions as done below and then enumerate all full permutation automorphisms.     
A different way to classify root system automorphisms (even if $g$ is \emph{not involutive}) is
\begin{lemma}
For any automorphism $g$ of a crystallographic arrangement and any chamber $K$ we can write 
$g=wf$, where $w$ is the Weyl groupoid element defined by $w(K)=g(K)$ and $f$ is a permutation 
automorphism of the object $K$ (so in particular a diagram automorphism of the respective Dynkin 
diagram, possibly trivial). 
\end{lemma}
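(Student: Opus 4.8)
The plan is to show that $g$ differs from a suitable Weyl groupoid element only by an automorphism fixing the chamber $K$, and that any such automorphism is a diagram automorphism of the corresponding object; so I first record two elementary facts. An automorphism $g\in\Aut(\Ac,R)$ acts linearly on $V$ (dually on $V^*$) and permutes $R$; sending $K$ to the chamber $g(K)$, it sends walls of $K$ to walls of $g(K)$ and inward-pointing normals of $R$ to inward-pointing normals of $R$, hence restricts to a bijection $\BDelta^K\to\BDelta^{g(K)}$. By the same argument, \emph{any} automorphism fixing a chamber permutes the simple roots of that chamber and therefore, preserving $R$, respects its Cartan matrix — that is, it is a permutation automorphism of the corresponding object. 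Finally, since $g$ permutes $R$ and carries $\BDelta^K$ onto $\BDelta^{g(K)}$, expanding the roots of $R$ in the basis $\BDelta^{g(K)}$ reproduces, up to the relabelling induced by $g$, the coordinate datum $R^a$ read off from $\BDelta^K$; so under Theorem \nref{thm_EquivalenceArrangementCartanscheme} the objects $a\leftrightarrow K$ and $b\leftrightarrow g(K)$ of the associated Cartan \schemen $\Cc$ carry the same root system (up to that relabelling), in particular isomorphic Cartan matrices.

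Now for the construction. Since $\Cc$ is connected and simply connected, its Weyl groupoid has a unique morphism from the object of $K$ to the object of $g(K)$, and I would realise it as a linear automorphism $w$ of $V$ that fixes $\Ac$ and $R$ with $w(K)=g(K)$ — this is possible precisely because $K$ and $g(K)$ carry isomorphic root data, and $w$ is well defined by simple connectedness; concretely one may take for $w$ the composite of the reflections along any gallery from $K$ to $g(K)$, noting that the individual reflections need not preserve $R$ for a non-reflection arrangement while the full composite does. Then $f:=w^{-1}g\in\Aut(\Ac,R)$ satisfies $f(K)=w^{-1}(g(K))=K$, so by the first paragraph $f$ is a permutation automorphism of the object $K$; and $g=wf$, which is the assertion.

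The step I expect to be the main obstacle is exactly this realisation of the abstract Weyl groupoid morphism as an honest automorphism $w$ of $(\Ac,R)$ with $w(K)=g(K)$: for two arbitrary chambers the Weyl groupoid morphism between their objects need \emph{not} be induced by any element of $\Aut(\Ac,R)$ — already adjacent chambers carry different Cartan matrices — and what makes it work here is precisely that $K$ and $g(K)$ lie in a single $\Aut(\Ac,R)$-orbit, so that the morphism relates isomorphic root data and is realised by an automorphism. Everything else is routine: connectedness and simple connectedness of $\Cc$ give the existence and uniqueness of $w$, and the description of chamber stabilisers in $\Aut(\Ac,R)$ as diagram automorphisms was established in the first paragraph.
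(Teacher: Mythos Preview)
Your argument contains a genuine gap at exactly the step you flag as the main obstacle. You assert that the Weyl groupoid element $w$, realised as the composite of reflections along a gallery from $K$ to $g(K)$, lies in $\Aut(\Ac,R)$ --- in particular that the full composite preserves $R$ even though the individual reflections need not. Your justification (``$K$ and $g(K)$ carry isomorphic root data, so the morphism is realised by an automorphism'') is a heuristic, not a proof: isomorphic root data at the two chambers means only $R^b=\pi(R^a)$ for some permutation $\pi$ of $I$, whereas the composite $w$ (in $\BDelta^K$-coordinates) preserves $R$ if and only if $R^a=R^b$ as subsets of $\ZZ^I$, i.e.\ if and only if $\pi$ is already a diagram automorphism at $a$. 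That is precisely the conclusion you are after, so the reasoning is circular at this point.

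The paper's proof sidesteps this entirely. It never claims $w\in\Aut(\Ac,R)$; instead it observes that both $w^{-1}g$ and its inverse $g^{-1}w$ fix the chamber $K$, hence (expressed in the basis $\BDelta^K$) are integer matrices with non-negative entries, and mutually inverse non-negative integer matrices are necessarily permutation matrices. This yields directly that $f=w^{-1}g$ permutes $\BDelta^K$, without any appeal to $w$ preserving $R$. Your first paragraph --- that an automorphism of $(\Ac,R)$ fixing a chamber is a permutation (diagram) automorphism --- is correct and is morally the same observation, but you can only invoke it once you know $f\in\Aut(\Ac,R)$, which is the very thing you did not establish. The paper's ``two inverse positive integral matrices'' trick is the missing ingredient.
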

\begin{proof}
This is easily proven as for Lie algebras: The autmorphism $g$ has to send a chamber $K$ to 	
some chamber $K'$. By transitivity of the Weyl groupoid on the set of chambers we find a 
(unique) $w$ with $w(K)=K'$. Then both $w^{-1}g$ and $g^{-1}w$ leave $K$ invariant and act 
hence as mutually inverse, integral	positive matrix on $R^K_+$. They are hence again 
permutation matrices.
\end{proof} 

It is not as easy as for parabolic restriction to determine the resulting root system of a folding restriction. In the special case of permutation restriction, the new set of roots $\bar{R}$ consists of the orbits of $R$ under $G$: 

\begin{example}\label{exm_PermutationRestriction} In all of the following examples, the diagram automorphism inducing $g$ is pictured, as well as the resulting root system including multiplicities. Note that foldings for Lie type such as a)-c) are of course well-known.  
\begin{enumerate}[a)]
\item Let $R^a_+=\{(1,0),(0,1)\}$ be of Lie type $A_1\times A_1$ and $g$ the permutation transposing $\alpha_1\leftrightarrow \alpha_2$. A basis for $V$ is $\alpha_1^\vee,\alpha_2^\vee$, hence the invariant subspace is $X=v\RR:=(\alpha_1^\vee+\alpha_2^\vee)\RR$. The restriction of the arrangement is of course of type $A_1$ (two of the four chambers are intersected). The restrictions of the roots $\alpha_1,\alpha_2\in R$ to $(\Ac^X,\bar{R})$ are both $\bar{\alpha}_1:=(v\mapsto 1)$ which can be expressed in terms of the orbit $G\alpha_1=\{\alpha_1,\alpha_2\}$ as follows
 $$\bar{\alpha}_1=\frac{\alpha_1+\alpha_2}{2}=\alpha_{\{1,2\}}
\qquad \mbox{where } \alpha_{G\alpha}:=\frac{1}{|G|}\sum_{g\in G} g.\alpha$$
(this also justifies our initial choice of scaling $v=\bar{\alpha}_1^\vee$).  
Hence $\bar{R}^a_+$ is reduced of Lie type $A_1$ with root 
$\bar{\alpha}_1=\alpha_{\{1,2\}}$. The root $\bar{\alpha}_1$ 
has \emph{multiplicity} $2$ since two roots $\alpha_1,\alpha_2$ (resp.\ hyperplanes $H_1,H_2$) restrict to the root $\bar{\alpha}_1$ (resp.\ hyperplane); see Section \ref{sec_ParabolicRestriction}.
\begin{center}
\includegraphics[scale=.18]{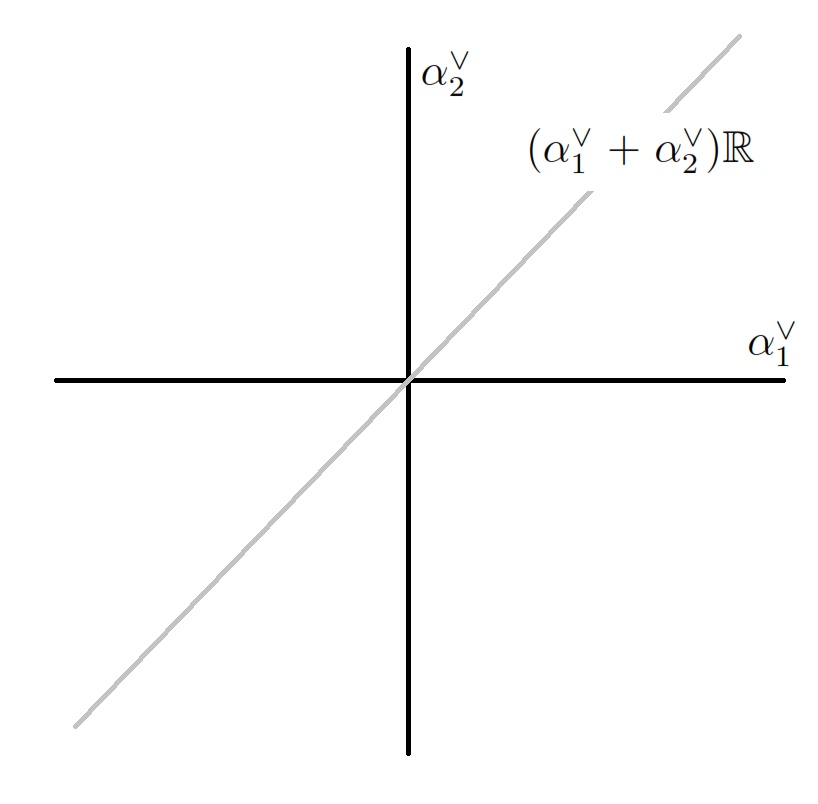}
\hspace{3cm}
\includegraphics[scale=.25]{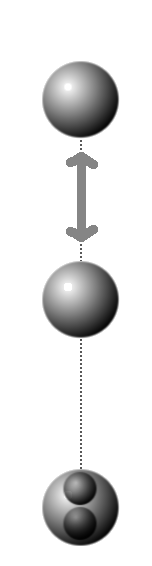}
\end{center}
\item Let $R^a_+=\{(1,0,0,0),(0,1,0,0),(0,0,1,0),(0,0,0,1),(1,0,1,0),(0,1,0,1)\}$ be of Lie type $A_2\times A_2$ and $g$ the permutation transposing $\alpha_1\leftrightarrow \alpha_2,\alpha_3\leftrightarrow \alpha_4$. Then the restriction is $\bar{R}^a_+=\{(1,0),(0,1), (1,1)\}$ with two simple roots and one non-simple root each corresponding to an orbit of length $2$:
\begin{align*}
\bar{\alpha}_1 &=\alpha_{\{1,2\}}\\
\bar{\alpha}_2 &=\alpha_{\{3,4\}}\\
\bar{\alpha}_1+\bar{\alpha}_2&=\frac{\alpha_1+\alpha_2+\alpha_3+\alpha_4}{2}
=\alpha_{\{13,24\}}
\end{align*}
The restriction is hence a reduced root system of Lie type $A_2$ and all roots have multiplicity $2$. 
\begin{center}
 \includegraphics[scale=.25]{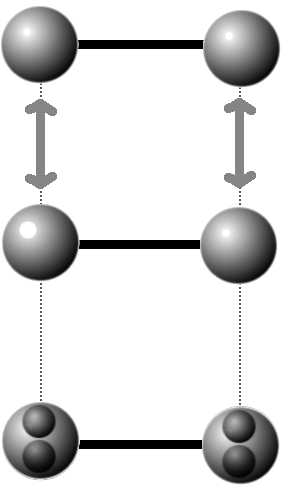}
\end{center}
\item Let $R^a_+=\{(1,0,0),(0,1,0),(0,0,1),(1,1,0),(0,1,1),(1,1,1)\}$ be of Lie type $A_3$ and $g$ the permutation transposing $\alpha_1\leftrightarrow \alpha_3$. Then the restriction is $\bar{R}^a_+=\{(1,0),(0,1), (1,1),(2,1)\}$ with the following roots:
\begin{align*}
	\bar{\alpha}_1 &= \alpha_{\{1,3\}} \\
	\bar{\alpha}_2 &= \alpha_{2} \\
	\bar{\alpha}_1+\bar{\alpha}_2 &= \frac{\alpha_1}{2}+\alpha_2+\frac{\alpha_3}{2}=\alpha_{		\{12,23\}}\\
	2\bar{\alpha}_1+\bar{\alpha}_2 &= \alpha_1+\alpha_2+\alpha_3=\alpha_{123}\\
\end{align*}
The restriction is hence reduced of Lie type $B_2$ with short roots of 
multiplicity $2$ and long roots of multiplicity $1$.
\begin{center}
\includegraphics[scale=.6]{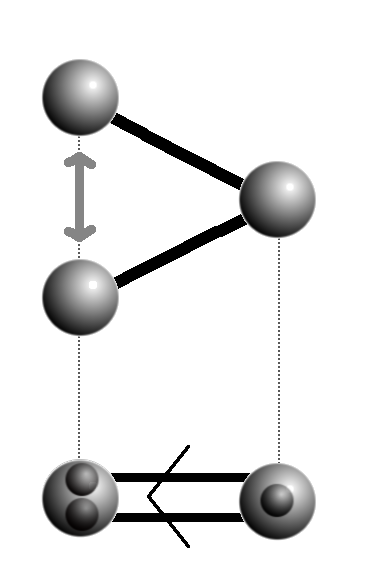}
\end{center} 
\item Let $R^a_+=\{(1,0,0),(0,1,0),(0,0,1),(1,1,0),(1,0,1),(0,1,1),(1,1,1)\}$, which is \emph{not} of Lie type, let $g$ be the permutation transposing $\alpha_1\leftrightarrow \alpha_3$. Then the restriction is $\bar{R}^a_+=\{(1,0),(0,1), (1,1),(2,0),(2,1)\}$ and hence of Lie type $B_2$ and multiplicities $1,2$ as in the previous case, but this time non-reduced, i.e.
$$[M_{\bar{\alpha_1}},M_{\bar{\alpha_1}}]=M_{2\bar{\alpha_1}}\neq 0$$
\begin{center}
\includegraphics[scale=.6]{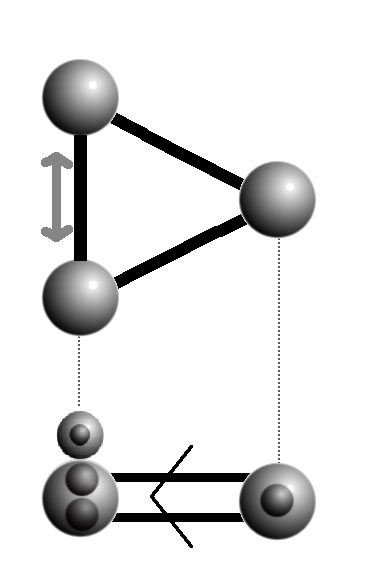}
\end{center}
A similar effect appears already when folding a so-called loop $A_2$, see \cite{Len14a}. 
\end{enumerate}
\end{example}

We finally give examples for general folding restrictions.
\begin{example} Similarly to Satake diagram (first example), according to the 
proof of Lemma \nref{lm_folding}, we draw the permutation of the simple roots in 
$\Delta_2$ and denote the simple roots in $\Delta_1$ by blackend dots.

 Let $R^K_+=\{(1,0,0),(0,1,0),(0,0,1),(1,1,0),(0,1,1),(1,1,1)\}$ be of Lie type 
	$A_3$ and $g$ the automorphism
	$$g:=\begin{pmatrix} 0 & 0 & 1 \\ 1 & -1 & 1 \\ 1 & 0 & 0\end{pmatrix}.$$
	We have $g.R^K_+\cap -R^K_+=\{-\alpha_2\}$ which is minimal (so $K$ is already chosen 
	suitably) and thus $\Delta_1=\{\alpha_2\}$ and $\Delta_2=\{\alpha_1,\alpha_3\}$. 
	Moreover, modulo $\Delta_1$ we have a 
	permutation $g:\alpha_1\leftrightarrow \alpha_2$. The restriction to $X_1=\alpha_2^\perp$ 
	is of type $A_2$ with multiplicities $2$ for the two simple roots and $1$ for the non-simple root.
	Note that while the root system is of Lie type, the multiplicities are not 
	invariant under all reflections and we thus get a Weyl groupoid covering the Weyl group $
	A_3$ (similar to a Lie superalgebra). As a second step, the permutation restriction by 
	the transposition $g$ yields a non-reduced root system of Lie type $A_1$ where $\bar{
	\alpha}_1$ has multiplicity $4$ and $2\bar{\alpha}_1$ has multiplicity $1$.
	\begin{center}
	 \includegraphics[scale=.6]{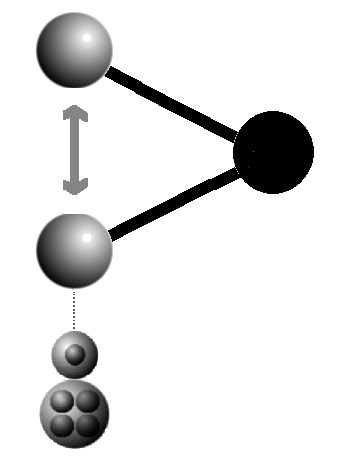}
	\end{center}
\end{example}

\section{Arrangements and Nichols algebras}\label{sec_arrnich}

\subsection{Nichols algebras}

Let $\CC$ be a braided category, then there is a straightforward notion of a Hopf algebra in 
$\CC$. For this and especially the notion of Yetter-Drinfel'd modules in braided categories see e.g.\ \cite{BLS14}. To simplify the discussion we assume in this article 
$\CC=\YD{h}$ where $h$ is a complex Hopf algebra.

A particularly well-known example class are the diagonal Nichols algebras. In Section \ref{sec_examples} the reader can find a table comparing the notation for general Nichols algebras 
with the notation for quantum groups and general diagonal Nichols algebras in \cite{Lusz94} and \cite{Heck07}.

\begin{definition}
Let $M=M_1\oplus \cdots \oplus M_n$ be a semisimple object in $\CC$ with simple summands $M_i$. We call $n$ the \emph{rank} of $M$. The \emph{Nichols algebra} $\B(M)$ is an $\NN$-graded Hopf algebra in the category $\CC$ with $\B(M)_0=1\C$ the identity object in $\CC$ and $\B(M)_1=M$ primitive elements $\Delta(m)=1\otimes m+m\otimes 1$ which generate $\B(M)$ as an algebra.\\ The Nichols algebra $\B(M)$ can be defined by the following equivalent properties:
\begin{itemize}
\item Every $\NN$-graded Hopf algebra $B'$ with the above properties admits a unique graded Hopf algebra surjection $B'\to \B(M)$ which is the identity on $M$. That is, $\B(M)$ is the quotient of the tensor algebra $T(M)$ by the largest Hopf ideal $\mathfrak{J}$ in degree $\geq 2$.
\item The pairing $M\otimes M^*\to \C$ induces a Hopf pairing $\B(M)\otimes \B(M^*)\to \C$ which is nondegenerate. That is, $\B(M)$ is the quotient of the tensor algebra $T(M)$ by the radical of the Hopf pairing $T(M)\otimes T(M^*)\to \C$.
\item More explicitly, $\B(M)$ is the quotient of $T(M)$ in each degree $k$ by the kernel of the \emph{quantum symmetrizer}, which is defined in terms of the braid group action $M^{\otimes k}\to M^{\otimes k}$.
\end{itemize} 
While the last characterization enables one in principle to compute $\B(M)$ in each degree, it is extremely difficult to find generators and relations for $\B(M)$ or even determine for a given $M$ if it is finite-dimensional. 
\end{definition}

Since $\B(M)$ is an $\NN$-graded algebra with finite-dimensional degree layers, we may consider the \emph{Hilbert series}, a formal power series
$$\mathcal{H}(t):=\sum_{k\geq 0}\dim\left(\B(M)_k\right) t^k.$$ 
In particular, if the dimension is finite then $\dim(\B(M))=\mathcal{H}(1)$. We frequently use the symbol $(n)_t:=1+t+t^2+\cdots+t^{n-1}$.\\

The first examples serves to fix notation:

\begin{example}[Diagonal case]
	Let $h=\C[\Gamma]$ and $\Gamma$ be an abelian group. Then the braided category $\CC=\YD{h}$ is 
	semisimple. The finite-dimensional simple objects are $1$-dimensional vector spaces $M_i=x_i\C$ 	
	together with a group element $g_i\in \Gamma$ and a linear character $\chi_i:G\to \C^\times$. 
	The braiding is given by 
	$$M_i\otimes M_j\to M_j\otimes M_i,$$
	$$x_i\otimes x_j\mapsto q_{ij}\;x_j\otimes x_i,\quad q_{ij}:=\chi_j(g_i).$$
	We call such a braiding \emph{diagonal} and $q_{ij}$ the \emph{braiding matrix}. From the third 
	characterization we see	that the Nichols algebra $\B(M)$ depends only on the braiding matrix 
	$(q_{ij})_{i,j}$ of $M$. Conversely, every diagonal braiding can be realized as Yetter-Drinfel'd 
	module, say over $\Gamma=\Z^n$.
\end{example}

\begin{example}
Let $q\in \C^\times$. Let $M=M_1$ be the one-dimensional 
complex braided vector space with 
basis $x_1$ and braiding matrix $(q_{11})=(q)$. The Nichols algebra is  
$$\B(M)\cong \C[x_1]/(x_1^\ell)$$
if $q$ is a primitive $\ell$-th root of unity, and $\B(M)\cong\C[x_1]$ if $q=1$ or not a root of unity. In the first case, the Hilbert series is $(\ell)_t$, in the second case $\frac{1}{1-t}$.
\end{example}

The next example exhibits the role of Nichols algebras 
as quantum Borel parts.

\begin{example}
Let $q$ be a primitive $\ell$-th root of unity and assume for simplification $2,3\nmid\ell$ and $\ell>3$. Let $\mathfrak{g}$ be a complex finite-dimensional semisimple Lie algebra of rank $n$, let $\alpha_1,\ldots\alpha_n$ be a set of simple roots, $R_+$ the set of positive roots, and let $(,)$ be the Killing form, normalized to $(\alpha_i,\alpha_i)=2$ for short simple roots $\alpha_i$.\\
Consider the Yetter-Drinfel'd modules $M=M_1\oplus\cdots\oplus M_n$ over the abelian group $\Gamma=\Z_\ell^n$ generated by $g_i$ and $M_i=E_i\C$ with group element $g_i$ and character $\chi_i(g_j):=q^{(\alpha_i,\alpha_j)}$.\\ 
Then $\B(M)$ is finite-dimensional with Hilbert series $\prod_{\alpha\in R_+}(\ell)_{t^{deg(\alpha)}}$, thus of dimension $\ell^{|R_+|}$. In fact $\B(M)\cong u_q(\mathfrak{g})^+$ is as an algebra the positive part of the small quantum group.
\end{example}

\subsection{The Weyl groupoid of a Nichols algebra}\label{sec_RootSystemsNicholsAlgebras}

The structure theory of Nichols algebras is dominated by the structure of the Weyl groupoid, which generalizes the role of reflection operators in quantum groups as introducted by Lusztig \cite{Lusz90}, and allows to define a root system for the Nichols algebra. For $M,q_{ij}$ diagonally braided (see above) Heckenberger has introduced in a series of papers reflections \cite{Heck07} and an arithmetic root system in terms of the bicharacter induced by $q_{ij}$ and finally classified all finite-dimensional Nichols algebras in terms of their root systems in \cite{Heck09}. More generally in \cite{AHS10}\cite{HS10} Andruskiewitsch, Heckenberger, and Schneider have introduced a Weyl groupoid and root system for arbitrary semisimple $M\in\YD{h}$. We shall sketch their approach in the form discussed in 
\cite{HS13} or \cite{BLS14}:\\

Let $M=M_1\oplus \cdots \oplus M_n$ be a finite-dimensional semisimple object 
in $\CC=\YD{h}$ and assume for simplicity that $\B(M)$ is already 
finite-dimensional. For any $i$ the projection $M\to M_i$ induces a projection 
$\pi_\B(M)\to \B(M_i)$. By the \emph{Radford projection theorem} we may write 
$$\B(M)\cong K_i \rtimes \B(M_i),\qquad K_i:=\B(M)^{coin\;\B(M_i)}:=\{h\mid 
(\id\otimes \pi_i)\Delta(h)=h\otimes 1\}$$
where the space of coinvariants $K_i$ with respect to $\pi_i$ is a Hopf algebra 
in the braided category $\YD{\B(M_i)}(\CC)$ and $\rtimes$ is the \emph{Radford 
biproduct}.\\  

By the second characterization of Nichols algebras we have a non-degenerate 
Hopf pairing $\B(M)\otimes \B(M^*)\to \C$, in particular 
$\dim(\B(M))=\dim(\B(M^*))$. One can show that this Hopf pairing induces a 
category equivalence $\Omega:\YD{\B(M)}\to \YD{\B(M^*)}$. Now we may turn 
$\Omega(K_i)\in\YD{\B(M^*)}$ again into a Hopf algebra in $\CC$, the 
\emph{reflection}:
$$r_i(\B(M)):=\Omega(K_i)\rtimes \B(M^*). $$
Reflection does clearly not change the dimension  
$\dim(\B(M))=\dim(r_i(\B(M)))$. Otherwise $\B(M),r_i(\B(M))$ can be quite 
different, although one can prove $\YD{\B(M)}\cong 
\YD{r_i(\B(M)}$. The reflection operation including the mentioned properties 
are neither restricted to Nichols algebras nor to the category $\CC=\YD{h}$ and 
were dubbed \emph{partial dualization} in \cite{BLS14}.\\

For a Nichols algebra $\B(M)$, Andruskiewitsch, Heckenberger and Schneider 
describe this operation in much more detail: In particular, $r_i(\B(M))\cong 
\B(R_i(M))$ is again a Nichols algebra for some explicit $R_i(M)\in \CC$. We 
summarize some results of \cite{AHS10}, \cite{HS10}, and \cite{HS13}:

\begin{theorem}\label{thm_NicholsAlgebraRootSystem}
Let $h$ be a complex Hopf algebra. Let $M_i$ be a finite
collection of simple $h$-Yetter-Drinfel'd modules. Consider 
$M:=\bigoplus_{i=1}^n M_i\in\YD{h}$ and assume that the associated
Nichols algebra $H:=\B(M)$ is finite-dimensional. 
Then the following assertions hold:
\begin{itemize}
\item By construction, the Nichols algebras 
$\B(M),r_i(\B(M))$ have the same dimension as complex vector spaces.
\item
For $i\in I$, denote by $\hat M_i$ the braided subspace
$$\hat M_i=M_1\oplus \ldots\oplus M_{i-1}\oplus M_{i+1}\oplus\ldots \oplus M_n$$
of $M$.
Denote by $\ad_{\B(M_i)}(\hat M_i)$ the braided vector space
obtained as the image of $\hat M_i\subset\B(M)$ under the adjoint action
of the Hopf subalgebra $\B(M_i)\subset\B(M)$. Then,
there is a unique isomorphism \cite[Prop. 8.6]{HS13} 
$$K_i\cong \B(\ad_{\B(M_{i})}(\hat M_i))$$
of Hopf algebras in the braided category 
$\YD{\B(M_i)}\left(\YD{h}\right)$
which is the identity on $\ad_{\B(M_i)}(\hat M_i)$.
\item  Define, with the usual convention for the
sign,
$$-c_{ij}:=\max\{m\;|\;\ad_{M_i}^m(M_j)\neq0\} \qquad c_{ii}=2.$$
Fix $i\in I$ and denote for $j\neq i$
$$V_j:=\ad_{M_i}^{-c_{ij}}\left({M_j}\right)\subset\B(M)\,\,.$$
The braided vector space 
$$R_i(M)=V_1\oplus\cdots M_i^*\cdots \oplus V_n
\in\YD{h}$$ 
is called the the $i$-th reflection of the braided vector
space $M$.
Then there is a unique isomorphism \cite[Thm.\ 8.9]{HS13}
of Hopf algebras in $\YD{h}$
$$r_i(\B(M_1\oplus\cdots \oplus M_n))\cong \B(V_1\oplus\cdots
M_i^*\cdots \oplus V_n)$$
which is the identity on $M$.
\item With the same definition for $c_{ij}$ for $i\neq j$
and $c_{ii}:=2$, the matrix $(c_{ij})_{i,j=1,\ldots n}$ is a
generalized Cartan matrix \cite[Thm. 3.12]{AHS10}. 
Moreover, one has $r_i^2(\B(M))\cong \B(M)$,
and the Cartan matrices coincide, $c_{ij}^M=c_{ij}^{r_i(M)}$. 
One obtains a Cartan \scheme where each object $a\in A$ corresponds to some reflection $\B(M^a)$. 
\item The maps $r_i$ give rise to a Weyl groupoid: The objects are the different isomorphism classes of tuples $(M_1,\ldots,M_n)$ and the formal morphisms $M\to r_i(M)$ are generated by reflections $\Z^n\to \Z^n$ with respect to $c_{ij}^M$.  
\item Let $R^a_+$ the set of positive roots, and choose a reduced expression for the longest element in the Weyl groupoid, then there is an isomorphism of $\NN$-graded objects in $\CC$ (not algebras), or PBW-basis:
$$\B(M^a)=\bigotimes_{\alpha\in R^a_+} M_\alpha.$$ 
Here $M_\alpha$ are certain simple object in $\CC$, namely $=M^a_i$ if $H_\alpha$ is adjacent to the object $a$. For details, we refer to \cite[Sect.\ 3.5]{AHS10}
and \cite[Sect.\ 5]{HS10}.
\end{itemize}
\end{theorem}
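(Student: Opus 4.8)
The plan is to obtain each item as a reformulation of a structure theorem already present in \cite{AHS10}, \cite{HS10}, \cite{HS13}, so the work is mainly to check that the pieces are mutually compatible and that the finiteness hypothesis on $\B(M)$ is used only where it is genuinely needed. None of the deep input --- the braid relations and the PBW factorisation --- will be reproved; it is imported wholesale.

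First I would dispose of the dimension claim. Applying the Radford projection theorem to the surjection $\pi_i\colon\B(M)\to\B(M_i)$ gives the biproduct splitting $\B(M)\cong K_i\rtimes\B(M_i)$, so $\dim\B(M)=\dim K_i\cdot\dim\B(M_i)$. The Hopf pairing of Nichols algebras is nondegenerate, hence $\dim\B(M^*)=\dim\B(M)$, and the functor $\Omega$ it induces is an equivalence of categories and so preserves dimensions of objects; since $r_i(\B(M))$ is obtained from $\B(M)$ by passing to the coinvariant Hopf algebra $K_i$, applying $\Omega$, and forming a biproduct again --- operations which multiply out to the same total dimension --- one gets $\dim r_i(\B(M))=\dim\B(M)$. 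The identification $K_i\cong\B(\ad_{\B(M_i)}(\hat M_i))$ is exactly \cite[Prop.\ 8.6]{HS13}, and $r_i(\B(M))\cong\B(R_i(M))$ with $R_i(M)=V_1\oplus\cdots M_i^*\cdots\oplus V_n$ is \cite[Thm.\ 8.9]{HS13}; at this point the finite-dimensionality of $\B(M)$ is used, guaranteeing $\ad_{M_i}^m(M_j)=0$ for $m$ large so that the integers $c_{ij}$ --- and with them the objects $V_j$ --- are well defined.

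Next I would check that the data organise into a Cartan \schemen. That $(c_{ij})_{i,j}$ is a generalized Cartan matrix, i.e.\ satisfies (M1) and (M2), is \cite[Thm.\ 3.12]{AHS10}; that $r_i^2(\B(M))\cong\B(M)$ and that the Cartan matrix is unchanged under reflection, $c_{ij}^M=c_{ij}^{r_i(M)}$, also comes from \cite{AHS10}. Taking $A$ to be the set of isomorphism classes of tuples $(M_1,\dots,M_n)$ reachable from $M$ by iterated reflections, $\rho_i$ the involution induced by $r_i$, and $\Cm^a$ the Cartan matrix at the class $a$, these two facts are precisely axioms (C1) and (C2), so $\Cc=\Cc(I,A,(\rho_i)_{i\in I},(\Cm^a)_{a\in A})$ is a Cartan \schemen and the $r_i$ give rise to its Weyl groupoid $\Wg(\Cc)$. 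One then sets $R^a\subseteq\Z^n$ to be the multiset of exponents occurring in the PBW generators of $\B(M^a)$.

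Finally, the PBW factorisation $\B(M^a)\cong\bigotimes_{\alpha\in R^a_+}M_\alpha$ along a reduced expression for the longest element of $\Wg(\Cc)$, together with the identification of each $M_\alpha$ with one of the simple $M_i^a$ when $H_\alpha$ is adjacent to $a$, is \cite[Sect.\ 3.5]{AHS10} and \cite[Sect.\ 5]{HS10}; from it one reads off that $(R^a)_{a\in A}$ is a root system of type $\Cc$, i.e.\ satisfies (R1)--(R4). The only nontrivial axiom is (R4): it restates the fact that the orders $m_{i,j}^a$ govern the braid-type relations among the reflections, and this is the analytic heart of \cite{AHS10} and \cite{HS10}. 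Assembling these statements gives the theorem. The main obstacle, if one insisted on a self-contained argument, is exactly this last point --- both the existence of the PBW basis and the validity of (R4) rest on the full rank-two analysis of Nichols algebras, for which there is no shortcut, and in this article they are used as black boxes.
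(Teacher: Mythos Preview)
Your proposal is correct and matches the paper's approach exactly: the paper presents this theorem as a summary of results from \cite{AHS10}, \cite{HS10}, and \cite{HS13} with the relevant citations embedded directly in the statement, and does not supply a separate proof. Your write-up simply makes the logical dependencies between the cited results more explicit, which is fine but not something the paper itself does.
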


Note that a crucial point is that the algebras $\B(M),r_i(\B(M))$ can be quite different and in particular their Cartan matrix $(c_{ij})_{ij}$ may be different -- in contrast to quantum groups, where all reflections are isomorphic as algebras. This is why we obtain a Weyl groupoid instead of a Weyl group, a Cartan \scheme instead of a single Cartan matrix, and more root systems than for semisimple Lie algebras.

\begin{theorem}[Diagonal case, \cite{Heck09}]
For a diagonally braided vector space $M,q_{ij}$  
Heckenberger had previously described the reflection $r_i(M), q_{ij}'$ as the 
explicit base transformation of the bicharacter induced by the reflection 
$\alpha_j\mapsto \alpha_j-c_{ij}\alpha_i,\;\alpha_i\mapsto -\alpha_i$, where the 
Cartan matrix $(c_{ij})_{ij}$ associated to $(M,(q_{ij})_{i,j})$ can be 
calculated by
$$c_{ii}=2,\qquad c_{ij}=-\min\{m\mid (m+1)_{q_{ii}}=0 \:\text{ or }\: 
q_{ii}^m q_{ij}q_{ji}=1\},
\quad i\neq j.$$

  Any finite-dimensional complex Nichols algebra in the
  categoy of Yetter-Drinfel'd  modules over an abelian group $\YD{\Gamma}$
  appears in Heckenberger's list \cite{Heck09}.
\end{theorem}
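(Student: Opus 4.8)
The statement has two quite different layers, and the plan is to treat them separately. \emph{The reflection and Cartan–matrix formulas} I would obtain by specialising Theorem~\nref{thm_NicholsAlgebraRootSystem} to the diagonal setting. Write $M=\bigoplus_{i=1}^n M_i$ with $M_i=x_i\C$ one-dimensional, carrying group element $g_i\in\Gamma$ and character $\chi_i$, so that $q_{ij}=\chi_j(g_i)$ and $\chi\colon\Z^n\times\Z^n\to\C^\times$, $\chi(\alpha_i,\alpha_j):=q_{ij}$, is the associated bicharacter. Since every $M_j$ is one-dimensional, $\ad_{M_i}^m(M_j)$ is spanned by the single braided iterated commutator $(\ad_c x_i)^m(x_j)$, which sits in $\Z^n$-degree $m\alpha_i+\alpha_j$. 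The first step would be the computation, valid for any diagonal $M$: an induction over $m$ using the braided skew-derivations $\partial_k$ on $\B(M)$ — equivalently, pairing against powers of $x_i$ via the nondegenerate Hopf pairing of the second characterisation of $\B(M)$ — shows that $(\ad_c x_i)^{m+1}(x_j)$ vanishes in $\B(M)$ while $(\ad_c x_i)^{m}(x_j)$ does not, exactly for $m=m_0:=\min\{m\ge 0\mid (m+1)_{q_{ii}}=0\text{ or }q_{ii}^m q_{ij}q_{ji}=1\}$; here one also uses that $\{m\mid (\ad_c x_i)^m(x_j)\neq 0\}$ is downward closed. By the definition $c_{ij}=-\max\{m\mid \ad_{M_i}^m(M_j)\neq 0\}$ from Theorem~\nref{thm_NicholsAlgebraRootSystem} this gives $c_{ij}=-m_0$, the asserted formula. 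For the reflection, Theorem~\nref{thm_NicholsAlgebraRootSystem} supplies $R_i(M)=V_1\oplus\cdots M_i^*\cdots\oplus V_n$ with $V_j=\ad_{M_i}^{-c_{ij}}(M_j)$; in the diagonal case $M_i^*$ is one-dimensional with group element $g_i^{-1}$ and character $\chi_i^{-1}$, and $V_j$, sitting in degree $-c_{ij}\alpha_i+\alpha_j$, is one-dimensional with group element $g_i^{-c_{ij}}g_j$ and character $\chi_i^{-c_{ij}}\chi_j$. Reading off the new braiding matrix $q'_{jk}$ — the value of the character of the $k$-th summand on the group element of the $j$-th summand — and distinguishing whether $j$ and $k$ equal $i$ or not, one checks $q'_{jk}=\chi(s_i\alpha_j,s_i\alpha_k)$, where $s_i$ is the reflection $\alpha_j\mapsto\alpha_j-c_{ij}\alpha_i$, $\alpha_i\mapsto-\alpha_i$ on $\Z^n$; that is, $q'$ is the base transformation of $\chi$ induced by $s_i$, as claimed.

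\emph{The classification.} The last sentence is Heckenberger's theorem \cite{Heck09} (building on the reflections of \cite{Heck07}), and reproving it in full is out of proportion here; I would only sketch its architecture. One first shows that a finite-dimensional diagonal $\B(M)$ has a \emph{finite} arithmetic root system, so that by Theorem~\nref{thm_EquivalenceArrangementCartanscheme} it is governed by a finite Weyl groupoid, equivalently a crystallographic arrangement — and the reflection formula just established is exactly what makes this root system combinatorially accessible. The core would be the rank-two analysis: using the explicit $c_{ij}$ and the reflection, classify all finite rank-two arithmetic root systems together with the braiding matrices realising them — the heavy, case-ridden part of the argument. One then bootstraps to arbitrary rank by combining the rank-two subsystems attached to each pair $(i,j)$ with the classification of finite Weyl groupoids (Theorem~\nref{thm_ClassificationArrangments}), and for each admissible Weyl groupoid solves the system of constraints that the Cartan-matrix formula imposes on the $q_{ij}$ in order to enumerate the realising braidings.

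The genuine obstacle lies entirely in this second layer: proving finiteness of the arithmetic root system for finite-dimensional $\B(M)$, and then pushing through the rank-two classification and the Weyl-groupoid bootstrap. Everything in the first layer — the reflection formula and the formula for $c_{ij}$ — is, by contrast, a routine specialisation of the general reflection theory recalled in Theorem~\nref{thm_NicholsAlgebraRootSystem} together with one standard skew-derivation computation in the diagonal Nichols algebra.
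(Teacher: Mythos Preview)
The paper does not prove this theorem; it is stated as a citation of Heckenberger's results \cite{Heck07,Heck09} and is used as background. So there is no ``paper's own proof'' to compare against.

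That said, your approach is sound. The first layer --- deriving the diagonal Cartan-matrix formula and the reflection rule by specialising Theorem~\nref{thm_NicholsAlgebraRootSystem} and computing $(\ad_c x_i)^m(x_j)$ via skew-derivations --- is exactly how one recovers Heckenberger's original formulas from the later general theory of \cite{AHS10,HS10,HS13}, and your argument is correct. Your honest assessment of the second layer is also appropriate: the classification in \cite{Heck09} is a substantial standalone result whose proof (finiteness of the arithmetic root system, exhaustive rank-two analysis, then assembly via the finite Weyl groupoid classification) cannot reasonably be reproduced here. One minor historical remark: Heckenberger's work predates and motivated the general framework of Theorem~\nref{thm_NicholsAlgebraRootSystem}, so strictly speaking the diagonal formulas were established directly rather than by specialisation --- but as a logical derivation within the present paper your route is perfectly legitimate.
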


The respective root systems are many, but not all possible root systems and Weyl groupoids as classified by Heckenberger and the first author, see Section \nref{sec_ClassificationWeylGroupoid}. For instance, there are infinitely many Weyl groupoids of rank $2$ and many more exceptional Weyl groupoids in rank $3$.\\

The following class of examples of type $B_2$ contains as special case the Borel part $u_q(B_2)^+$ when the braiding matrix $(q_{ij})_{ij}$ is chosen symmetric.

\begin{example}
Let $q,q^4\neq 1$ be an $\ell$-th root of unity and $M_{\alpha_1},M_{\alpha_2}$ 
be $1$-dimensional Yetter-Drinfel'd modules (say over $\Gamma=\ZZ^2$) such that 
the diagonal braiding of $M^a:=M_{\alpha_1}\oplus M_{\alpha_2}$ fulfills
$$q_{\alpha_1\alpha_1}=q^2,\qquad q_{\alpha_i\alpha_j}q_{\alpha_j\alpha_i}=q^{-4},
\qquad q_{\alpha_2\alpha_2}=q^4.$$
The previous formula yields the Cartan matrix and Dynkin diagram $M^a$:
\begin{center}
$(c_{ij}^a)_{i,j}=\begin{pmatrix} 2 & -1 \\ -2 & 2 \end{pmatrix}$ \quad
\includegraphics[scale=.6]{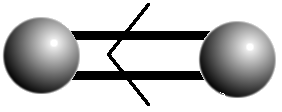}
\end{center}
The reflection on the short root $\alpha_1$ given by this Cartan matrix gives for $r_1(M)$
$\alpha_1'=-\alpha_1$ and $\alpha_2'=\alpha_2+2\alpha_1$. For the braiding matrix of 
$r_1(M)$ we thus get by base transformation of the bicharacter:
\begin{align*}
	q_{\alpha_1'\alpha_1'}&=q_{\alpha_1\alpha_1}=q^2\\
	q_{\alpha_2'\alpha_2'}&=q_{\alpha_2\alpha_2}
		q_{\alpha_1\alpha_2}^2q_{\alpha_2\alpha_1}^2q_{\alpha_1\alpha_1}^4=q^{4-8+8}=q^4\\
	q_{\alpha_1'\alpha_2'}&=q_{\alpha_1\alpha_2}^{-1}q_{\alpha_1\alpha_1}^{-2}\\
	q_{\alpha_2'\alpha_1'}&=q_{\alpha_1\alpha_2}^{-1}q_{\alpha_1\alpha_1}^{-2}
\end{align*}
We observe that in general the Yetter-Drinfel'd module $r_1(M)$ given by $q_{\alpha_i'\alpha_j'}$ is \emph{not} isomorphic to $M$ (for the special case $q_{ij}=q_{ji}$ appearing in $u_q(B_2)^+$ it is!). However we easily check
$$q_{\alpha_1\alpha_2}'q_{\alpha_2\alpha_1}'
=q_{\alpha_1\alpha_2}^{-1}q_{\alpha_2\alpha_1}^{-1}q_{\alpha_1\alpha_1}^{-4}
=q^{4-8}=q_{\alpha_i\alpha_j}q_{\alpha_j\alpha_i},$$
so $r_1(M)=M_{\alpha_1'}\oplus M_{\alpha_2'}$ has the same Cartan matrix as $M$ (this can not be deduced merely from the Dynkin diagram). The same can be checked for $r_2(M)$. We thus obtain a Cartan \scheme with a single Cartan matrix for all reflections $M^a$. The arrangement and set of positive roots is of Lie type $B_2$:
$$R^a_+=\{\alpha_1,\alpha_2,\alpha_{12},\alpha_{112}\}=\{(1,0),(0,1),(1,1),(2,1)\}$$
with notation $\alpha_{12}:=\alpha_1+\alpha_2$, $\alpha_{112}:=2\alpha_1+\alpha_1$. The self-braidings are 
$$q_{\alpha_1\alpha_1}=q_{\alpha_{12}\alpha_{12}}=q^2, 
\qquad q_{\alpha_2\alpha_2}=q_{\alpha_{112}\alpha_{112}}=q^4.$$ 
Hence the PBW-basis Theorem gives an isomorphism of $\NN$-graded $\Gamma$-Yetter-Drinfel'd modules:
\begin{align*}
\B(M^a)&\cong \bigotimes_{\alpha\in R^a_+} M_\alpha\\
&=\C[t_{1}]/(t_{1}^{\ord(q^2)})\otimes\C[t_{2}]/(t_{2}^{\ord(q^4)})\otimes
\C[t_{12}]/(t_{12}^{\ord(q^2)})\otimes\C[t_{112}]/(t_{112}^{\ord(q^4)}).
\end{align*}
For $\ell$ odd, $q^2,q^4$ both have order $\ell$, hence the Hilbert series is $(\ell)_t^2(\ell)_{t^2}(\ell)_{t^3}$ and in particular the dimension is $\ell^4$.
\end{example}

We also give an example which is not of Lie type and which we will use in the following. It is a finite-dimensional Nichols algebra $\B(M)$ of diagonal type of rank $3$ appearing in \cite[row $11$]{Heck09}. The rows $9,10$ define the same arrangement, but different roots of unity involved and hence  more types of objects with the same Cartan matrix. The arrangement associated to this Nichols algebra as well as all restrictions are calculated in Section \nref{sec_ExampleSimplicialComplex}.

\begin{example}\label{exm_CycleNicholsAlgebra}
Let $M_{\alpha_1},M_{\alpha_2},M_{\alpha_3}$ be $1$-dimensional 
Yetter-Drinfel'd modules (say over $\ZZ^3$) such that the diagonal 
braiding of $M^a:=M_{\alpha_1}\oplus M_{\alpha_2}\oplus M_{\alpha_3}$ fulfills 
$$q_{\alpha_i\alpha_i}=-1,\qquad q_{\alpha_i\alpha_j}q_{\alpha_j\alpha_i}=\zeta,$$ 
with $i\neq j$ and $\zeta$ a primitive third root of unity. The associated crystallographic arrangement has $7$ roots. It is called $\mathcal{A}_3^1(2)$ and the first member of a series.
$$R^a_+=\{(1,0,0),(0,1,0),(0,0,1),(1,1,0),(1,0,1),(0,1,1),(1,1,1)\}.$$
From this we easily see the Dynkin diagram/Cartan matrix is a simply-laced triangle and not of Lie type:
\begin{center}
  \includegraphics[scale=.6]{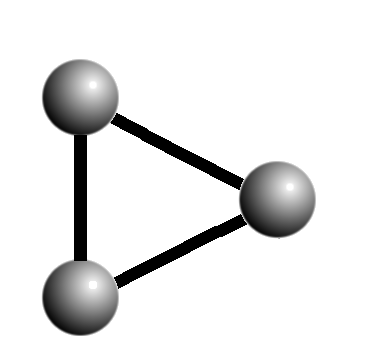}
\end{center}
The self-braidings of these roots are
$$q_{\alpha_i\alpha_i}=-1,\qquad q_{\alpha_{ij},\alpha_{ij}}=\zeta,
\qquad q_{\alpha_{123}}=-1.$$
Hence the Nichols algebra $\B(M^a),\;M^a=M_{\alpha_1}\oplus M_{\alpha_2}\oplus M_{\alpha_3}$ has Hilbert series $(2)_{t}^3(3)_{t^2}^3(2)_{t^3}$ and dimension $432$.

We calculate the reflection to an object $a'$ with respect to $\alpha_2$: The simple roots at $a'$ are then 
$\alpha'_1:=\alpha_{12}$, $\alpha'_2:=-\alpha_2$, $\alpha'_3:=\alpha_{23}$ and the braiding matrix fulfills
$$q_{\alpha'_1\alpha'_1}=q_{\alpha'_3\alpha'_3}=\zeta,\qquad q_{\alpha'_2\alpha'_2}=-1,$$ 
$$q_{\alpha'_1\alpha'_2}q_{\alpha'_2\alpha'_1}=
q_{\alpha'_2\alpha'_3}q_{\alpha'_3\alpha'_2}=\zeta^{-1},
\qquad q_{\alpha'_1\alpha'_3}q_{\alpha'_3\alpha'_1}=1.$$
The respective roots in the basis $\alpha'_1,\alpha'_2,\alpha'_3$ can easily be either again calculated or directly read off from transforming $R^a_+$:
$$R^{a'}_+=\{(1,0,0),(0,1,0),(0,0,1),(1,1,0),(0,1,1),(1,2,1)\}$$
which has Dynkin diagram/Cartan matrix $A_2$, but is not of Lie type, since it has a different root system. 
\begin{center}
\includegraphics[scale=.6]{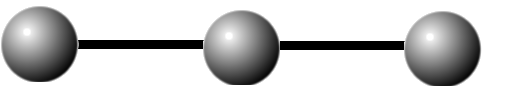}
\end{center}

The Hilbert series is now 
$(2)_{t}(3)_{t}^2(2)_{t^2}^2(2)_{t^3}(3)_{t^4}$ but of course the dimension is still $432$.
\end{example}

We finally discuss nondiagonal Nichols algebras over nonabelian groups.
\begin{definition}
  Let $M$ be a Yetter-Drinfel'd module over the group ring of $\C[G]$ of a 
nonabelian group $G$. Such an $M$ decomposes into irreducible summands 
$M_{\alpha_i}=M([g_i],\chi_i)$ associated to a conjugacy class $[g_i]\subset G$ 
and an irreducible representation $\chi$ of the centralizer of $g_i$ (see e.g.\ 
\cite{HS10}). In particular $M=\bigoplus_{i} M_{\alpha_i}$ is a nondiagonally 
braided vector space and we study the Nichols algebra $\B(M)$.
\end{definition}
The first  finite-dimensional 
Nichols algebra of this type has been constructed first for the dihedral group 
$G=\D_4$ in \cite{MS00} in the context 
of Coxeter groups. Later, in \cite{Len14a} it has been constructed by the 
second author as smallest example of a new family of large-rank 
finite-dimensional indecomposable Nichols algebras, introducing the folding 
construction of non-diagonal Nichols algebras from diagonal ones (in this case 
$A_2\times A_2$ with $q=-1$). The reader may compare the root system below with 
the root system in \nref{exm_PermutationRestriction} b). A similar construction 
for c) returns a decomposable Nichols algebra of type $B_2$.

\begin{center}
	\includegraphics[scale=.25]{DynkinA2A2A2.png}
	\includegraphics[scale=.65]{DynkinA3B2.png}
\end{center} 

\begin{example}
	Let $\Gamma=\D_4=\langle a,b\rangle/(a^4=b^2=1,ba=a^{-1}b)$ then one can define $2$-dimensional 
	simple Yetter-Drinfel'd modules $M_{\alpha_1},M_{\alpha_2}$ associated to the conjugacy classes 
	$[ab],[b]$. In fact, the Nichols algebra $\B(M)$ for $M=M_{\alpha_1}\oplus M_{\alpha_2}$ has a 
	root system of type $A_2$ and $M_{\alpha_{12}}$ is also $2$-dimensional and associated to the 
	conjugacy class	$[a]$. Altogether this Nichols algebra has Hilbert series $(2)_t^4(2)_{t^2}^2$ 
	and thus dimension $2^6$, indeed just like $u_q(A_2\times A_2)^+$ with $q=i$.
\end{example}

Note that in \cite{Heck09} all finite-dimensional complex Nichols algebras over abelian groups (equivalently with diagonal braiding) have been classified, and very recently in \cite{HV14} all finite-dimensional complex indecomposable Nichols algebras over nonabelian groups. For later use we note that only a subset of all finite Weyl groupoids (Theorem \nref{thm_ClassificationArrangments}) appear in these cases: 

\begin{corollary}\label{cor_NicholsAlgebraExistingRootsystems}
Using the labeling introduced in \cite{p-CH10},
the following crystallographic arrangements of rank three come from Weyl 
groupoids of finite-dimensional Nichols algebras of diagonal type\footnote{For 
the readers convenience: The super Lie algebras (numbered 4-10 in 
Heckenberger's list) have root systems $A_3,B_3,\Ac_3^{1}(2),\Ac_3^{2}(2)$ with 
respectively $6,9,7,8$ roots, as well as the sporadic root system labeled $6$ 
with $13$ roots. The rather alien Nichols algebras with numbers $16,17$ in 
Heckenberger's list have the sporadic root systems labeled $1,2$ with $10$ resp.\ 
$11$ roots.},
\[ \text{types } A_3,B_3,C_3,\Ac_3^{1}(2),\Ac_3^{2}(2),\quad \text{and the sporadic ones labeled } 1,2,3,6,8,9, \]
and the following come from complex finite-dimensional indecomposable Nichols algebras over nonabelian groups,
\[\text{types } A_3,C_3 ,\quad \text{and the sporadic one labeled }9. \]
Note that by Theorem \nref{thm_AllRestrictions}, all of these are (parabolic) restrictions of Weyl arrangements.
\end{corollary}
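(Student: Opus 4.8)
The plan is to prove the corollary by a direct inspection of the two Nichols-algebra classifications together with the classification of rank-three crystallographic arrangements, reducing everything to a finite check; the last assertion then follows formally from Theorem \nref{thm_AllRestrictions}.

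First I would go through Heckenberger's classification of finite-dimensional Nichols algebras of diagonal type of rank three \cite{Heck09}. Each entry there records a generalized Dynkin diagram together with its arithmetic root system, i.e.\ a finite set $R^a_+\subseteq\NN_0^3$. For every such set I would form the arrangement $\{\alpha^\perp\mid \alpha\in R^a_+\}$ and, via Theorem \nref{thm_EquivalenceArrangementCartanscheme}, identify it with one of the rank-three crystallographic arrangements classified in \cite{p-CH10}. The identification is done by computing combinatorial invariants --- the number of hyperplanes, the number of chambers, and the multiset of Cartan matrices occurring at the objects of the associated simply-connected Cartan \schemen --- and matching these against the tables of \cite{p-CH10}. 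Collecting the arrangements that actually occur should yield exactly $A_3$, $B_3$, $C_3$, $\Ac_3^1(2)$, $\Ac_3^2(2)$ and the sporadic ones numbered $1,2,3,6,8,9$. As a partial cross-check, the super Lie algebras in rows $4$--$10$ of \cite{Heck09} should account for $A_3,B_3,\Ac_3^1(2),\Ac_3^2(2)$ and the sporadic arrangement $6$, and rows $16,17$ for the sporadic arrangements $1,2$, in agreement with the footnote.

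Next I would repeat this for the classification of finite-dimensional indecomposable Nichols algebras over nonabelian groups \cite{HV14}: restrict to rank three, read off the root systems as above, and match the resulting arrangements. Here only $A_3$, $C_3$ and the sporadic arrangement $9$ should survive; for instance the rank-three member of the folding family of \cite{Len14a}, built from $A_2\times A_2$ at $q=-1$, has a root system of type $A_3$, compare Example \nref{exm_PermutationRestriction} and the surrounding discussion. Finally, for the last assertion I would simply observe that every arrangement produced in the two steps above lies in the list of Theorem \nref{thm_AllRestrictions}, which is precisely the list of rank-three crystallographic arrangements arising as parabolic restrictions of higher-dimensional ones; hence each of them is a parabolic restriction of a reflection arrangement.

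The only real obstacle is the bookkeeping in the matching step: one must be careful that different rows of \cite{Heck09} sharing the same Dynkin diagram may nevertheless carry genuinely different root systems (and conversely that a fixed arrangement may appear in several rows), and that the decomposable Cartan matrices occurring among the rank-three diagonal examples are handled consistently with the conventions of the statement (which lists the irreducible arrangements that occur). This is routine but error-prone, and is best delegated to the computer using the explicit data in \cite{Heck09}, \cite{HV14} and \cite{p-CH10}.
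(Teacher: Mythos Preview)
Your approach is correct and is essentially what the paper does: the corollary is stated without proof, as a direct reading-off from the classifications \cite{Heck09} and \cite{HV14} matched against the tables of \cite{p-CH10}, with the last sentence following immediately from Theorem \nref{thm_AllRestrictions}. One small slip in your illustrative remark: the folding of $A_2\times A_2$ at $q=-1$ (the $\D_4$ example) yields a rank-\emph{two} Nichols algebra with root system $A_2$, not a rank-three one of type $A_3$; the rank-three examples over nonabelian groups in \cite{Len14a} arise from folding larger diagrams, but this does not affect your argument.
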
 

\begin{question}
As already mentioned in the previous example, all such Nichols algebras of rank $<4$ over nonabelian groups had been uniformly constructed in \cite{Len14a} by \emph{folding}; they have root systems of type $A_n,C_n,E_6,E_7,E_8,F_4$. The resulting root system was calculated by hand and can now be understood as a restriction. It would be interesting to give a larger class of constructions:
\begin{itemize}
\item By considering foldings of Nichols algebras of non-Lie type, such as Example 
\nref{exm_PermutationRestriction} d).
\item By considering Satake-type foldings with nontrivial $\Delta_1$, which tends to drastically decrease the rank. This leads a-priori to Nichols algebras in other braided categories. 
\item In the affine setting, compare with the quantum affine algebra foldings in \cite{Len14b}.
\end{itemize}
In particular one would like to construct via some generalized folding the remaining exceptional cases of low-rank $2,3$ classified in \cite{HV14} and known (and possibly new) cases of rank $1$.
\end{question}

\begin{example}\label{exm_newNicholsAlgebra}
	In rank 3, apart from $A_3,C_3$, there is only one finite dimensional exceptional Nichols algebra over a 
	nonabelian group (defined for any field with a third 
	root of unity or any field characteristic $3$). It is \emph{not} of 
	Lie type, but has the root system rank 3 no.\ 9 with 13 roots and root space dimensions (in the first object $a$ with $B_3$ 
	Cartan matrix, see \cite[Lm.\ 8.8]{HV14}):
	
	\begin{center} 
		\includegraphics[scale=.6]{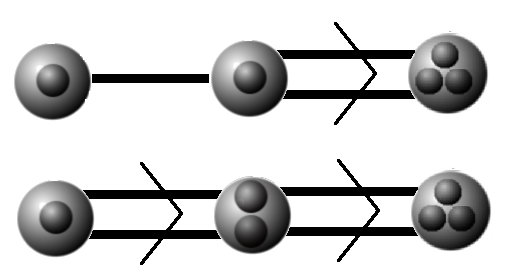}
	\end{center}
	\begin{center}
	\begin{tabular}{cc}
	$\alpha \in R^a_+$ & $\dim(M_\alpha)$\\
	\hline
	$1,2, 12,12^23^4,12^33^4, 1^22^33^4$ & 1 \\
	$23^2,123^2,12^23^2$ & 2\\
	$3,23,123,12^23^3$ & 3\\ 
	\end{tabular}
	\end{center}
	The rank 2 parabolic generated by $\alpha_2,\alpha_3$ is an exceptional Nichols algebra over a 
	nonabelian group and has a standard root system of type $B_2$. The three-dimensional rank 1 
	parabolics are the Nichols algebras of dimension $12$ over the nonabelian group $\S_3$, which 
	was the first known Nichols algebra over a nonabelian group \cite{MS00}.\\
	
	Ideally one would want to construct also these Nichols algebras via restriction, and we start 
	our search with purely parabolic restrictions (no diagram automorphism). \emph{Indeed}
	we find, that a restriction of the sporadic root system of rank 4 no.~7 with $25$ roots (and the parabolics rank 3 no.~2 and $B_2$) have
	precisely the right multiplicities. Moreover, they can be realized (in characteristic $
	0$ with $\zeta$ a primitive third root of unity) by an inclusion chain of diagonal 
	Nichols algebras, namely row 17, row 16 and row 6 for $q
	=-1$:\footnote{Images from Heckenberger's list modified by adding a black dot.}
	\begin{center} 
		\includegraphics[scale=.8]{qDiagrams.png}
	\end{center}
	Hence we \emph{can} obtain Nichols algebras with the right root system and root space 
	dimensions in 
	the category of $\C[x]/(x^3)$-Yetter-Drinfel'd modules. However the Hilbert series' 
	do \emph{not} coincide completely, e.g. in the smallest example we obtain a Nichols algebra 
	with Hilbert series $(2)_t(3)_t(6)_t$ instead of $(2)_t(3)_t(2)_t$ over the group $\S_3$, which 
	suggests to somehow consider a subalgebra of graded index $(3)_{t^2}$. If 
	and how our Nichols algebras above can indeed be turned into Nichols algebras over the 
	respective nonabelian groups is not clear at this point (for the folding in \cite{Len14a} this
	was a central extension, but this cannot be true for $\S_3$). 
\end{example}

\subsection{Restrictions of Nichols algebras}

Let $\B(M^a)$ be the Nichols algebra of a semisimple object
$M^a=M_{\alpha_1}\oplus\cdots\oplus M_{\alpha_n}$ in a braided category $\CC$, and assume for now $\CC=\YD{h}$. Let $(\Ac,R)$ be the associated $n$-dimensional crystallographic arrangement where the chamber $a$ has positive simple roots $\Delta^a=\{\alpha_1,\ldots,\alpha_n\}$.

Fix some $\alpha_i$. Recall that in the construction of
the reflection $r_{\alpha_i}$ in Section \nref{sec_RootSystemsNicholsAlgebras} we have
considered the algebra of coinvariants $\B(\bar{M})$ of the Hopf algebra map $\pi:\B(M^a)\to\B(M_{\alpha_i})$, which is a Hopf
algebra in a different braided category 
$$ \B(\bar{M})
:=\B(M^a)^{coin\;\B(M_i)}\in\YD{\B(M_{\alpha_i})}(\CC),$$
and by Theorem \nref{thm_NicholsAlgebraRootSystem} it is a Nichols algebra $\B(\bar{M})$ in this category and thus has an associated root system. The aim of the next theorem is to show that the root system of $\B(\bar{M})$  is precisely the restricted root system $(\Ac^X,\bar{R})$ with $X=\alpha_i^\perp$. It moreover gives interpretations for the restriction multiplicities and possible non-reducedness of $(\Ac^X,\bar{R})$ in terms of $\B(\bar{M})$. Repeating this argument for a subset $J$ of simple roots, we get a similar statement for an arbitrary intersection of hyperplanes $X=J^\perp$.\\

Let us give in advance a \emph{heuristic argument}, that underlies the proof below: The restriction to $X$ induces a map of sets (additive where appropriate) between the set of roots $R\to \bar{R}\cup \{0\}$ and we denote by $\bar{\alpha}\in \bar{R}^a_+\cup\{0\}$ the image of a root $\alpha\in R^a_+$.
\begin{itemize}
\item The preimage of $0$ are all roots $\alpha$ where $X \subset \alpha^\perp$, i.e.\ in terms of arrangements the localization $\Ac_X$, i.e. in terms of roots the parabolic subsystem generated by $J$. 

In the PBW-basis of $\B(M)=\bigotimes_{\alpha\in R^a_+}\B(M_\alpha)$ the respective factors $\B(M_\beta)$ for all $\beta$ with $\bar{\beta}=0$ hence form precisely the PBW-basis of the subalgebra $\B(M_J)$ with $M_J=\bigoplus_{i\in J} M_{\alpha_i}$.
\item The preimage of any $\bar{\alpha}\neq 0$ is a root $\alpha$ up to addition of roots in $J$. 

Hence \emph{morally}, adding all preimages of $\bar{\alpha}$ should yield an 
irreducible  $\B(M_J)$-Yetter-Drinfel'd submodule 
$$\bar{M}_{\bar{\alpha}}:=\bigoplus_{\beta,\bar{\beta}=\bar{\alpha}}M_\beta$$
of $\bar{M}$. The rank of $\bar{M}$, as 
$h$-Yetter-Drinfel'd module, is by construction equal to the number of 
preimages of $\bar{\alpha}$ and hence the restriction multiplicity.

However, since in general $M_{\alpha_i+\beta}\neq \ad_{M_{\alpha_i}}(M_\beta)$ 
this is \emph{not} true, but we can argue similarly with other 
$\bar{M}_{\bar{\alpha}}$ defined via the right hand side expression.
\item Finally, in the PBW-decomposition of $\B(\bar{M})$ only the roots $\bar{\alpha}$ with $\frac{1}{k}\bar{\alpha}\not\in \bar{R}_+$ appear. The other factors $\B(M_\beta),\bar{\beta}\in \NN \bar{\alpha}$ appear implicitly as higher commutators of $\B(\bar{M}_{\bar{\alpha}})$. This is why we work with the non-reduced system.
\end{itemize}
Finally this yields a PBW-decomposition of $\B(\bar{M})$ according to $(\Ac^X,\bar{R}^a_+)$:
$$\B(\bar{M}_{\bar{\alpha}})=\bigotimes_{\beta,\bar{\beta}\in\NN\bar{\alpha}}\B(M_\beta),
\qquad \B(\bar{M})=\bigotimes_{\bar{\alpha}\in \bar{R}^a_+}\B(\bar{M}_{\bar{\alpha}}).$$

\noindent
We now turn to the exact proof:

\begin{theorem}\label{thm_NicholsAlgebraRestrictionPBW}~
\begin{enumerate}[a)]
\item Assume $M^a=M_{i}\oplus M_{j}$ is of rank $2$ and $\B(M)$ is finite-dimensional. 
Then there is an isomorphism of $\NN$-graded $\YD{\B(M_i)}(\CC)$-objects
$$\bigotimes_{\alpha_i\neq \alpha\in R_+^a} \B(M_\alpha) \cong \B(\bar{M}_{\bar{\alpha}_j}),
\qquad \bar{M}_{\bar{\alpha}_j}:=\ad_{\B(M_{i})}(M_{j}).$$
(Note that the tensor factors are not $\B(M_i)$-Yetter-Drinfel'd modules.)
\item Under the assumptions 
in a) the restricted root systems of $\Ac^{\alpha_i^\perp}$ and of 
$\B(\bar{M})$ are trivially coinciding (type $A_1$), but moreover the 
restriction multiplicity of $\bar{\alpha}_j\in\bar{R}^a_+$ equals the rank of 
the new simple summand $\bar{M}_{\bar{\alpha}_j}$ as semisimple object in $\CC$. 
Non-reducedness, i.e.\ $2\bar{\alpha}_j\in\bar{R}^a_+$, implies 
$[\bar{M}_{\bar{\alpha}_j},\bar{M}_{\bar{\alpha}_j}]\neq 0$.
\item Let $M^a=\bigoplus_{i\in I} M_i$ and assume $\B(M)$ is finite-dimensional with root system $R_+^a$. For $J\subset I$ let $M_J=\bigoplus_{i\in J} M_i$ and let $(\Ac^{X},\bar{R}),X=J^\perp$ be the restricted root system. Then there exists an isomorphism
$$\bigotimes_{\bar{\alpha}\in \bar{R}^a_+} \B(\bar{M}_{\bar{\alpha}}) \cong \B(M^a)^{coin\;\B(M_J)}$$
of $\NN^{I\backslash J}$-graded objects in $\YD{\B(M_J)}(\CC)$
for suitable $\bar{M}_{\bar{\alpha}}\in\YD{\B(M_J)}(\CC)$ where in particular $\bar{M}_{\bar{\alpha}_j}:=\ad_{\B(M_J)}(M_j)$ for $j\not\in J$. Moreover, the restriction multiplicity of $\bar{\alpha}$ determines the rank of the new simple summand $M_{\bar{\alpha}}$ as semisimple object in $\CC$ and non-reducedness, i.e.\ $2\bar{\alpha}\in\bar{R}^a_+$ implies 
$[\bar{M}_{\bar{\alpha}},\bar{M}_{\bar{\alpha}}]\neq 0$.
\item Under the assumptions 
in c) for the Nichols algebra $\B(\bar{M}):=\B(M^a)^{coin\;\B(M_J)}$ in the 
category $\YD{\B(M_J)}(\CC)$, the associated root system (crystallographic 
arrangement) in the sense of the Theorem 
\nref{thm_NicholsAlgebraRootSystem} coincides with the reduced crystallographic arrangement of the restricted root 
system $(\Ac^X,\bar{R})$ with $X=J^\perp$ in the sense of Lemma \nref{lm_ParabolicRestriction}.
\end{enumerate}
\end{theorem}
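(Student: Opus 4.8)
The plan is to prove part (a) directly by a careful bookkeeping of the PBW-basis, then deduce (b) as an immediate reading of (a), bootstrap from rank $2$ to arbitrary rank in (c) by iterating the rank-$2$ reflection along a reduced expression, and finally obtain (d) by comparing the combinatorial recipe of Lemma \nref{lm_ParabolicRestriction} with the Cartan-scheme recipe of Theorem \nref{thm_NicholsAlgebraRootSystem}. Throughout I would lean on the heuristic paragraph preceding the theorem as the actual skeleton of the argument: the restriction map $R\to\bar R\cup\{0\}$, $\alpha\mapsto\bar\alpha$, sorts the PBW-factors $\B(M_\alpha)$ into those with $\bar\alpha=0$ (which assemble to $\B(M_J)$) and those with $\bar\alpha\ne 0$ (which must assemble, grouped by $\NN\bar\alpha$, to the $\B(\bar M_{\bar\alpha})$).

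For part (a): take a reduced expression for the longest element of the rank-$2$ Weyl groupoid that starts with $s_i$, so that the induced convex order on $R_+^a$ lists $\alpha_i$ first and then all $\alpha\ne\alpha_i$ in some order. The PBW-theorem (last bullet of Theorem \nref{thm_NicholsAlgebraRootSystem}) gives $\B(M^a)\cong\B(M_i)\otimes\bigotimes_{\alpha_i\ne\alpha\in R_+^a}\B(M_\alpha)$ as $\NN$-graded objects in $\CC$, where moreover each $M_\alpha$ for $\alpha\ne\alpha_i$ is obtained from one of the $M_j$ by iterated $\ad_{M_i}$. Since $\B(M^a)\cong K_i\rtimes\B(M_i)$ with $K_i=\B(M^a)^{coin\,\B(M_i)}=\B(\bar M)$, and since $K_i\cong\B(\ad_{\B(M_i)}(\hat M_i))=\B(\ad_{\B(M_i)}(M_j))$ by the second bullet of Theorem \nref{thm_NicholsAlgebraRootSystem}, the uniqueness of the Radford biproduct decomposition identifies $\bigotimes_{\alpha_i\ne\alpha\in R_+^a}\B(M_\alpha)$ with $\B(\bar M_{\bar\alpha_j})$, $\bar M_{\bar\alpha_j}=\ad_{\B(M_i)}(M_j)$, as $\NN$-graded $\YD{\B(M_i)}(\CC)$-objects. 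The rank of $\bar M_{\bar\alpha_j}$ as an object of $\CC$ equals, by the PBW-theorem applied inside $K_i$, the number of $\alpha\in R_+^a$ with $\alpha\ne\alpha_i$ that are ``$\bar\alpha_j$-typed'', i.e. $\#\{\alpha\in R_+^a:\bar\alpha\in\NN\bar\alpha_j\}$; but in rank $2$ with restriction $X=\alpha_i^\perp$, every such $\alpha$ restricts to a positive multiple of $\bar\alpha_j$, and the restriction multiplicity of $\bar\alpha_j$ is exactly the number of hyperplanes falling together, which is this same count. For (b), non-reducedness $2\bar\alpha_j\in\bar R_+^a$ means two PBW-letters of $\bar M_{\bar\alpha_j}$-type sit at distinct roots $\beta,\beta'$ with $\bar\beta+\bar\beta'\in\NN\bar\alpha_j$; convexity of the PBW-order forces a nonzero commutator, hence $[\bar M_{\bar\alpha_j},\bar M_{\bar\alpha_j}]\ne 0$.

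For part (c): I would induct on $|J|$. Write $J=J'\sqcup\{k\}$. By induction $\B(M^a)^{coin\,\B(M_{J'})}\cong\bigotimes_{\bar\alpha\in(\bar R')_+^a}\B(\bar M'_{\bar\alpha})$ in $\YD{\B(M_{J'})}(\CC)$, with the multiplicity/non-reducedness statements for the restriction to $X'=(J')^\perp$. Now $\B(M_k)$ sits inside this as a sub-Yetter-Drinfel'd algebra (it is the PBW-factor $\B(\bar M'_{\bar\alpha_k})$, which is just $\B(M_k)$ since $\alpha_k\notin J'$), and taking coinvariants with respect to $\B(M_k)$ is precisely the rank-$2$-style reflection of part (a) performed \emph{inside} the braided category $\YD{\B(M_{J'})}(\CC)$ on the pair (distinguished summand $M_k$, everything else); transitivity of taking coinvariants, $\B(M^a)^{coin\,\B(M_J)}=(\B(M^a)^{coin\,\B(M_{J'})})^{coin\,\B(M_k)}$, then yields the decomposition over $\bar R_+^a$. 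One must check that this last step sees the further collapsing $X'\rightsquigarrow X=X'\cap\alpha_k^\perp$ correctly on multiplicities: that is exactly the content of Lemma \nref{lm_ParabolicRestriction}, namely that parabolic restriction to $J^\perp$ is the composite of one-hyperplane restrictions, with multiplicities multiplying accordingly, and the braided module $\bar M_{\bar\alpha}$ at each stage absorbs the newly-merged PBW-letters by the adjoint action $\ad_{\B(M_k)}$, so $\bar M_{\bar\alpha_j}=\ad_{\B(M_J)}(M_j)$ for $j\notin J$ follows by composing the adjoint actions. Finally (d): the crystallographic arrangement associated to $\B(\bar M)\in\YD{\B(M_J)}(\CC)$ is, by Theorem \nref{thm_NicholsAlgebraRootSystem}, the Cartan scheme whose simple objects are the $\bar M_{\bar\alpha_j}$, $j\notin J$, and whose roots are the PBW-exponents in $\NN^{I\backslash J}$; by (c) these PBW-letters are indexed by the reduced classes $\bar\alpha\in\bar R_+^a$ with $\tfrac1m\bar\alpha\notin\bar R_+^a$, i.e. by the reduced root system underlying $(\Ac^X,\bar R)$ as computed in Lemma \nref{lm_ParabolicRestriction}; and the Cartan integers $c_{jk}$ read off from $-c_{jk}=\max\{m:\ad_{M_j}^m(M_k)\ne 0\}$ match the $c_{jk}$ one reads from the convex geometry of $\Ac^X$ because both count how far one can add $\bar\alpha_j$ to $\bar\alpha_k$ inside the respective root set — which are the same set by (c).

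\medskip
\noindent\textbf{Main obstacle.} The delicate point is not any single identity but the claim, used implicitly in (a) and crucially in the induction step of (c), that the ``morally correct'' module $\bigoplus_{\bar\beta=\bar\alpha}M_\beta$ agrees up to isomorphism of graded objects with the ``actually correct'' $\bar M_{\bar\alpha}=\ad_{\B(M_J)}(M_j)$ — the heuristic paragraph explicitly warns that $M_{\alpha_i+\beta}\ne\ad_{M_i}(M_\beta)$ in general. The safe route, which I would take, is to never assert $\bar M_{\bar\alpha}\cong\bigoplus_{\bar\beta=\bar\alpha}M_\beta$ as braided modules, but only to match \emph{dimensions/ranks} of graded pieces: the PBW-theorem gives $\dim\B(\bar M_{\bar\alpha})$ as a product over $\{\beta:\bar\beta\in\NN\bar\alpha\}$ of the $\dim\B(M_\beta)$, and this numerical coincidence, together with the already-known algebra isomorphism $K_i\cong\B(\ad_{\B(M_i)}(M_j))$ from \cite[Prop.\,8.6]{HS13} / Theorem \nref{thm_NicholsAlgebraRootSystem}, is enough to pin down the rank of $\bar M_{\bar\alpha}$ and hence the multiplicity statement, without ever needing the false module identification. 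Keeping (a)--(d) on the level of graded-object isomorphisms and rank counts, and invoking \cite{HS13} for the one genuinely algebraic input, is what makes the whole argument go through.
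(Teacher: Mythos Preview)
Your skeleton is the paper's: PBW plus Radford for (a), root-string counting for (b), iterate one hyperplane at a time for (c). But the induction in (c) has a real gap. Writing $J=J'\sqcup\{k\}$, you propose to pass from coinvariants over $\B(M_{J'})$ to coinvariants over $\B(M_J)$ by ``transitivity'' $(\B(M^a)^{coin\,\B(M_{J'})})^{coin\,\B(M_k)}=\B(M^a)^{coin\,\B(M_J)}$. This is false on dimensions whenever $k$ is connected to $J'$: the correct sub-Hopf-algebra in $\YD{\B(M_{J'})}(\CC)$ is $\B(\bar M'_{\bar\alpha_k})=\B(\ad_{\B(M_{J'})}(M_k))$, which strictly contains $\B(M_k)$, and it is over \emph{this} that one must take coinvariants. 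More importantly, even with the right subalgebra, this step is the $|J|=1$ case of (c) applied to a Nichols algebra of rank $|I|-|J'|$, not rank $2$, so part (a) does not cover it. The paper fills exactly this hole: after reducing to $|J|=1$ it groups the PBW factors $\bigotimes_{\alpha\ne\alpha_i}\B(M_\alpha)$ according to the fibers of the restriction $R_+^a\setminus\{\alpha_i\}\to\bar R_+^a$, notes that each fiber is an $\alpha_i$-root string after reflecting to a suitable object $a'$ in which the relevant root lies in a rank-$2$ parabolic $\langle\alpha_i,\alpha_j^{a'}\rangle$, and then invokes the genuine rank-$2$ result (a) on that parabolic to identify the group with some $\B(\bar M_{\bar\alpha})$. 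This fiber-by-fiber use of (a) is the missing ingredient in your argument.

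The same confusion surfaces as a miscount in (b): the $\CC$-rank of $\bar M_{\bar\alpha_j}=\bigoplus_{k=0}^{-c_{ij}}\ad_{M_i}^k(M_j)$ is $1-c_{ij}$, the length of the $\alpha_i$-string through $\alpha_j$, i.e.\ $\#\{\alpha:\bar\alpha=\bar\alpha_j\}$, not $\#\{\alpha:\bar\alpha\in\NN\bar\alpha_j\}$. These differ already for $G_2$: the root $3\alpha_i+2\alpha_j$ restricts to $2\bar\alpha_j$, contributes to $[\bar M,\bar M]$ (witnessing non-reducedness), and is \emph{not} a summand of $\bar M$. Finally, for (d) the paper does not attempt to match Cartan integers; once (c) provides an $\NN^{I\setminus J}$-graded decomposition of $\B(\bar M)$ indexed by $\bar R_+^a$, it simply invokes the uniqueness of the root-system decomposition of a Nichols algebra, \cite[Thm.~4.5(2)]{HS10}, which forces this to be \emph{the} root system. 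Your direct verification is plausible but needs the extra work of showing $\ad_{\bar M_j}^m(\bar M_k)\ne 0$ exactly when $\bar\alpha_k+m\bar\alpha_j\in\bar R_+^a$, which is not immediate from what you have.
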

\begin{proof}~\newline
a) By Theorem \nref{thm_NicholsAlgebraRootSystem} (PBW basis) we have an isomorphism of $\NN^2$-graded $\CC$-objects
$$\bigotimes_{\alpha\in R_+^a} \B(M_\alpha)\cong \B(M),$$
and since the map is multiplication in $\B(M)$ it is an isomorphism in $\YD{\B(M_i)}(\C)$ (of course the tensor factors are no submodules in this matter).\\
By the same theorem we have an isomorphism of $\NN$-graded $\YD{\B(M_i)}(\CC)$-objects 
$$\B(\ad_{\B(M_{\alpha_i})}(M_{\alpha_j}))\cong \B(M)^{coin\;\B(M_i)}.$$
By the Radford projection theorem we have an isomorphism of $\CC$-algebras 
\begin{align*}
\B(M)&\cong\B(M_i)\otimes \B(M)^{coin\;\B(M_i)},\\
x&\mapsto \pi(x^{(1)})\otimes \pi(S(x^{(2)}))x^{(3)},\\
ab&\leftmapsto a\otimes b.
\end{align*} 
and since the $\NN$-grading in the rank $1$ Nichols algebra $\B(\ad_{\B(M_{\alpha_i})}(M_{\alpha_j}))$ is given by the radical filtration and the $\B(M_i)$-action by the adjoint action, this is even an isomorphism of $\NN$-graded $\YD{\B(M_i)}(\CC)$-objects.\\
Altogether we get a surjective map of $\NN$-graded $\YD{\B(M_i)}(\CC)$-objects
$$\bigotimes_{\alpha\in R_+^a} \B(M_\alpha)
\stackrel{\mathrm{mult.}}{\longrightarrow} \B(M) \stackrel{x\mapsto \pi(S(x^{(1)}))x^{(2)}}{\longrightarrow}  \B(M)^{coin\;\B(M_i)}
\cong \B(\ad_{\B(M_i)}(M_j)).$$
We immediately see that $\cdots \otimes \B(M_i)^+$ is in the kernel. Hence this factorizes to the following  surjective map, which is even an isomorphism since the dimensions are equal:
$$\bigotimes_{\alpha_i\neq\alpha\in R_+^a} \B(M_\alpha)
\stackrel{\cong}{\longrightarrow} \B(\ad_{\B(M_i)}(M_j)).$$
b) By definition 
\begin{align*}
\bar{M}&=\ad_{\B(M_i)}(M_j)
=\bigoplus_{k=0}^{\infty} \ad_{M_i}^k(M_j)
=\bigoplus_{k=0}^{-c_{ij}} \ad_{M_i}^k(M_j)
\end{align*}
and by Theorem \nref{thm_NicholsAlgebraRootSystem} all $\ad_{M_i}^k(M_j)$ are simple $\CC$-objects. Hence the rank of $\bar{M}$ as $\CC$-object is $1-c_{ij}$. On the other hand the roots restricting to $\bar{\alpha}_j$ in $\Ac^{\alpha_i^\perp}$ are all elements in the root string $\alpha_j,\alpha_j+\alpha_i,\ldots,\alpha_j-c_{ij}\alpha_i$, so the two numbers coincide as claimed. The claim about non-reducedness comes from considering the dimensions, since $\bar{M}$ generates $\B(\bar{M})$.\\
\\
c) Since both parabolic restriction and taking coinvariants can be performed root-by-root it 
suffices to treat the case $J=\{\alpha_1\}$.
As in a) we have by the PBW basis theorem an isomorphism of $\NN^{I\backslash J}$-graded objects in $\YD{\B(M_J)}(\CC)$
$$\bigotimes_{\alpha\in{R}^a_+} \B(M_{{\alpha}}) \cong \B(M^a)$$
and by the Radford projection theorem an isomorphism of 
$\NN^{I\backslash J}$-graded objects in $\YD{\B(M_J)}(\CC)$
$$\bigotimes_{\alpha\in {R}^a_+\backslash R^a_{J,+}} \B(M_{{\alpha}}) \cong \B(M^a)^{coin\;\B(M_J)}$$
where $R^a_{J,+}:=R^a_{+}\cap J\NN$ is the parabolic subsystem (localization). Now restriction to $(\Ac^{J^\perp},\bar{R})$ defines a surjective map (even compatible with addition where appropriate)
$${R}^a_+\backslash R^a_{J,+} \to \bar{R}^a_+$$
where the preimages of $\bar{\alpha}_j$ is the root string $(\alpha_j+\NN\alpha_i)\cap R^a_+$. Similarly all other preimages of $\bar{\alpha}$ are root strings after reflection, i.e. in some other object $a'$ where $\alpha$ is in a parabolic subsystem $\langle\alpha_i,\alpha_j\rangle$. Reordering (which is clearly isomorphism) and using for each $\bar{\alpha}$ the rank $2$ result of a) yields the desired isomorphism. The claims on multiplicities and nonreducedness follow from c).\\
d) Since we obtained in c) a decomposition 
$$\bigotimes_{\bar{\alpha}\in \bar{R}^a_+} \B(M_{\bar{\alpha}})\cong \B(\bar{M})$$
of the Nichols algebra in $\NN^{I\backslash J}$-graded $\B(M_J)$-Yetter-Drinfel'd modules, and the degrees of this decomposition is by construction given by $\Ac^X$, we can apply the uniqueness result in \cite{HS10} Theorem 4.5 (2).\footnote{We thank Jing Wang for pointing out this uniqueness result for the root system.}
\end{proof}

\subsection{First examples of restrictions}\label{sec_examples}

We first discuss very explicitly two small examples of parabolic restriction of 
diagonal Nichols algebras, which yield familiar restricted root 
systems. 

For the convenience of the reader we first compare the notation for the 
diagonal case in Lusztig's book on quantum groups \cite{Lusz94} and 
Heckenberger's paper \cite{Heck07} on reflections in diagonal Nichols algebras:
\begin{center}
\begin{tabular}{p{7.5cm}p{.8cm}p{6cm}}
 {\bf Diagonal case} && {\bf General case} \\
 \hline
 Braided vector space\newline
 spanned by $\theta_i,\; E_i$ or $x_i$ for $i\in I$\newline
 $V=V^+(\chi)$ depending on a bicharacter $\chi$.
 && Yetter-Drinfel'd module $M$\\
 \hline
 Diagonal braiding \newline
 $x_i\otimes x_j\to q_{ij}\; x_j\otimes x_i$\newline
 braiding matrix $q_{ij}=v^{i\cdot j/2}=q^{d_ic_{ij}}$\newline
 or more generally $q_{ij}=\chi(\alpha_i,\alpha_j)$
 && Yetter-Drinfel'd module braiding\newline
 $a\otimes b\mapsto a^{(-1)}.b\otimes a^{(0)}$\\
 \hline
 1-dim. subspaces
 $\theta_i\C,\;E_i\C,\;x_i\C$
 \newline Duals $F_i\C$ in $V^*=V^-(\chi)$ 
 && Simple summands $M_i$ of $M$
 \newline Duals $M_i^*$ in $M^*$\\
 \hline
 Nichols algebra $\mathbf{f},\;\B(V)$ or $\mathcal{U}^+(\chi)/\mathcal{I}^+$
 && Nichols algebra $\B(M)$\\
 \hline
 Cartan matrix\newline
 $-c_{ij}=\max\{m\;|\;\ad_{x_i}^m(x_j)\neq0\}$\newline
 $=\min\{m\mid (m+1)_{q_{ii}}=0 \text{ or }
  q_{ii}^m q_{ij}q_{ji}=1\}$
 && Cartan matrix\newline
 $-c_{ij}:=\max\{m\;|\;\ad_{M_i}^m(M_j)\neq0\}$\\
 \hline
 Subalgebras $\mathbf{f}[i]$ in \cite{Lusz94} Sec. 38.1.1 \newline
 generated by all $\ad^m_{\theta_i}(\theta_j)$ for fixed $i\in I$ \newline 
 Subalgebra $\mathcal{U}^+_{+i}(\chi)$ modulo $\mathcal{I}^+$ for $i\in I$\newline
 in \cite{Heck07} Def. 4.29, Prop. 5.10
 && Coinvariants over $\B(M_i)$ for $i\in I$\newline
 $K_i:=\B(M)^{coin\;\B(M_i)}$\\
 \hline
 Set of positive roots $R^+,\;\mathcal{R}_+^a$\newline
 Reflection operator $r_i,\;T_i$ for $i\in I$ && 
 Set of positive roots $\mathcal{R}_+^a$\newline
 Reflection operator $T_i$ for $i\in I$\\
 \hline
\end{tabular}
\end{center}~\\

\begin{example}\label{exm_RestrictionA2}
   Let $M_{1}=x_1\C$ and $M_2=x_2\C$ be Yetter-Drinfel'd modules over
  $\Gamma:=\Z_2\times\Z_2=\langle g_1,g_2\rangle$ with
  $$\chi_1(g_1)=-1,\qquad \chi_2(g_1)=-1,$$
  $$\chi_1(g_2)=1,\qquad \chi_2(g_2)=-1.$$
  Then $M:=M_1\oplus M_2$ has diagonal braiding 
$$q_{ij}=\begin{pmatrix} -1 & -1 \\ 1 & -1\end{pmatrix},$$ 
and the Nichols algebra 
$\B(M)$ is of Lie type $A_2$,
	$$R_+^a=\{(1,0),(0,1),(1,1)\}.$$
	This means that multiplication
  in the Nichols algebra induces the following isomorphism of graded vector 
spaces (PBW-basis)
  $$\B(x_1)\otimes \B(x_{12})\otimes \B(x_2)\cong \B(M)
  \qquad x_{12}:=x_1x_2+x_2x_1\in \B(M).$$
  The Hilbert series is hence $(2)_t^2(2)_{t^2}$ and the dimension is $2^3$.\\
  
  Consider the coinvariants $K_1=\B(M)^{coin\;\B(M_1)}$ with respect to the 
projection 
	$\B(M_1\oplus M_2)\to\B(M_1)$, it is generated by 
$x_2,x_{12}$ (this is an easy direct calculation). By Theorem 
	\nref{thm_NicholsAlgebraRootSystem},
  $K_1$ is the Nichols algebra of the $\B(M_1)$-Yetter-Drinfel'd 
module\footnote{inside the base category $\YD{\Gamma}$ i.e. $K_1$ is actually a 
$\B(M_1)\rtimes k[\Gamma]$-Yetter-Drinfel'd module.} 
  $$\bar{M}=\ad_{\B(M)}(N)=\ad_{1}(x_2)+\ad_{x_1}(x_2)=x_2\C\oplus 
x_{12}\C.$$
  $\bar{M}$ is simple as a $\B(M_1)$-Yetter-Drinfel'd module 
(hence the root system of $\B(\bar{M})$ is of type $A_1$) and 
$\bar{M}$ is semisimple of rank $2$ as an object in the base category 
$\YD{\Gamma}$ (since $\Gamma$ is abelian, this is equal to the $\C$-dimension).

For the convenience of the reader we write down the 
explicit \emph{nondiagonal braiding} on $\bar{M}$ as it follows from 
$\B(M_1)$-action and -coaction; observe that the off-diagonal term comes from a 
$2x_1$ coacting from $x_{12}$ and then acting on $x_2$:  

$$\begin{pmatrix}
 x_2\otimes x_2 & x_2 \otimes x_{12} \\
 x_{12} \otimes x_2 & x_{12} \otimes x_{12}
\end{pmatrix}
\;\longmapsto\;
\begin{pmatrix}
 \hspace{2cm}-x_2\otimes x_2 & -x_{12} \otimes x_{2} \\
 -2x_{12} \otimes x_2 +x_2 \otimes x_{12} & -x_{12} \otimes x_{12}
\end{pmatrix}$$

This is in agreement with our result. Namely, the restriction of the root 
system $(\Ac,R)$ of type $A_2$ to the hyperplane $H_{\alpha_1}$ is the root 
system $(\Ac^{H_{
	\alpha_1}},\bar{R})$ of type $A_1$. More precisely, $\alpha_1=(1,0)$ 
restricts to $0\in \bar{R}$ and is removed, while both $\alpha_2,\alpha_{12}$ 
restrict to the same simple root  in $\bar{R}$ (multiplicity $2$).

\begin{center} 
		\includegraphics[scale=.25]{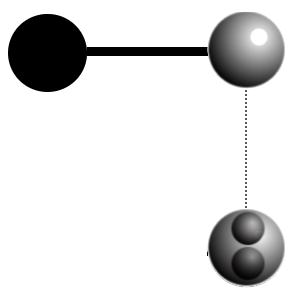}
\end{center}
    
As indicated in the application part of the introduction, this $\bar{M}$ is the 
fundamental $\sl_2$-representation used in \cite[Sec.\ 4]{Rosso98} as an 
induction step from $u_q(\sl_2)$ to $u_q(\sl_3)$. In the same way our result 
explains the different $A_{n-1}$ representations appearing in Rosso's induction 
step from $A_{n-1}$ to $A_n,B_n,C_n,D_n, etc.$ 
\end{example}

We next consider an example which is not of Lie type:

\begin{example}
	Consider the diagonal Nichols algebra $\B(M)$ of rank $3$, thoroughly 
treated in 
	Example \nref{exm_CycleNicholsAlgebra}, which has braiding matrix 
	$$q_{\alpha_i\alpha_i}=-1,\qquad 
q_{\alpha_i\alpha_j}q_{\alpha_j\alpha_i}=\zeta,$$  
	and the following root system and vector space PBW-basis:
	\begin{align*}
	R^a_+&=\{(1,0,0),(0,1,0),(0,0,1),(1,1,0),(1,0,1),(0,1,1),(1,1,1)\},\\
	\B(M)&\cong \B(x_{\alpha_1})\otimes \B(x_{\alpha_2})\otimes 
\B(x_{\alpha_3})
	\otimes \B(x_{\alpha_{12}})\otimes \B(x_{\alpha_{13}})\otimes 
\B(x_{\alpha_{23}})
	\otimes \B(x_{\alpha_{123}})\\
	&\cong \C[x_{\alpha_{1}}]/(x_{\alpha_{1}}^2)\otimes
	\C[x_{\alpha_{2}}]/(x_{\alpha_{2}}^2)\otimes 
	\C[x_{\alpha_{3}}]/(x_{\alpha_{3}}^2)\\
	&\otimes\C[x_{\alpha_{12}}]/(x_{\alpha_{12}}^3)\otimes
	\C[x_{\alpha_{13}}]/(x_{\alpha_{13}}^3)\otimes
	\C[x_{\alpha_{23}}]/(x_{\alpha_{23}}^3)\\
	&\otimes\C[x_{\alpha_{123}}]/(x_{\alpha_{123}}^2).
		\end{align*}
	Consider the projection to the Nichols algebra 
	$$\B(M_{\alpha_2})\cong \C[x_{\alpha_2}]/(x_{\alpha_2}^2).$$ 
	The action and coaction of $x_{\alpha_2}$ span the following simple 
$2$-dimensional 
	Yetter-Drinfel'd submodules over $\B(M_{\alpha_2})\rtimes\C[\Gamma]$: 
		\begin{align*}
	M_{\bar{\alpha_1}}&:=M_{\alpha_1}\oplus M_{\alpha_{12}},\\
	M_{\bar{\alpha_2}}&:=M_{\alpha_3}\oplus M_{\alpha_{23}},\\
	M_{\bar{\alpha_{12}}}&:=M_{\alpha_{13}}\oplus M_{\alpha_{123}}.
	\end{align*} 
	We obtain the new generating Yetter Drinfel'd module according to 
	Theorem \nref{thm_NicholsAlgebraRootSystem}:
	$$\B(M)^{coin\;\B(M_{\alpha_2})} = \B(\bar{M}),\qquad
	\bar{M}=\ad_{\B(M_{\alpha_2})}(M_{\alpha_1}\oplus M_{\alpha_3})=
	M_{\bar{\alpha_1}}\oplus M_{\bar{\alpha_2}}.$$
	Altogether, this agrees with the crystallographic arrangement 
$(\Ac^{H_{\alpha_2}},\bar{R})$ which is of type $A_2$ 
	and all multiplicities $2$. We get a new PBW-basis and Hilbert series
	\begin{align*}
	\B(\bar{M})&\cong \B(M_{\bar{\alpha}_1})\otimes \B(M_{\bar{\alpha}_2})
	\otimes\B(M_{\bar{\alpha}_{12}}),\\
	\mathcal{H}_{\B(\bar{M})}(t) &=(2)_{t}^2(3)_{t}^2(3)_{t^2}(2)_{t^2}.
	\end{align*}
\end{example}

\subsection{Nichols algebras with new Weyl groupoids}\label{sec_newWeylGroupoids}
 
Recall from Corollary \nref{cor_NicholsAlgebraExistingRootsystems} that only certain root systems appear as root systems of Nichols algebras in $\YD{\C[G]}$. Parabolic restriction produces Nichols algebras in other braided categories 
$$\B(\bar{M}):=\B(M)^{coin\;\B(M_J)}\;\in\; \YD{\B(M_J)}(\CC)$$
and by Theorem \nref{thm_NicholsAlgebraRestrictionPBW} the root system and Weyl groupoid corresponds to the restricted root system $(\Ac^X,\bar{R})$, $X=J^\perp$. We hence find an answer to Question \nref{q_Cuntz}:

\begin{theorem}\label{thm_newWeylGroupoid}~ 
\begin{enumerate}
\item There exist Nichols algebras whose corresponding crystallographic arrangement are the sporadic arrangements of rank three labeled $7,13,14,15,20,23$, although such a Nichols algebra does not exist over any finite group. These have $13,16,17,17,19,19$ positive roots respectively, and are restrictions of the reflection arrangements of type $E_7,E_8,E_8,E_8,E_8,E_8$ respectively.
\item Since every crystallographic arrangement of rank greater than three is a (parabolic) restriction of a Weyl arrangement, every crystallographic arrangement of rank greater than three is symmetry structure of some Nichols algebra.
\end{enumerate}
\end{theorem}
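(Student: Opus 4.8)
The plan is to obtain the theorem as a corollary of three facts already available. First, for a primitive $\ell$-th root of unity $q$ with $\ell>3$ and $\gcd(\ell,6)=1$, and any semisimple Lie algebra $\g$, the positive part $u_q(\g)^+$ is a finite-dimensional diagonal Nichols algebra whose associated crystallographic arrangement is the reflection arrangement of $\g$ equipped with the root system of $\g$ (the positive part of the small quantum group, cf.\ the examples of Section \nref{sec_arrnich}). Second, Theorem \nref{thm_AllRestrictions} says that, up to equivalence, every crystallographic arrangement of rank $>3$ either is a reflection arrangement or is a parabolic restriction of one, the arrangement one restricts from being of classical type $A_N,B_N,C_N,D_N$ once the rank exceeds $8$. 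Third, Theorem \nref{thm_NicholsAlgebraRestrictionPBW}(d) identifies the root system (crystallographic arrangement) of $\B(M)^{coin\;\B(M_J)}$ with the reduced parabolic restriction of the root system of $\B(M)$ to $J^\perp$.

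For part (2), let $\Ac$ be a crystallographic arrangement of rank $r>3$. If $\Ac$ is equivalent to the reflection arrangement of a semisimple $\g$, then $\B(M)=u_q(\g)^+$ already realizes it. Otherwise, by Theorem \nref{thm_AllRestrictions}, $\Ac$ is equivalent to a parabolic restriction of the reflection arrangement of some semisimple $\g$. Since every intersection of reflection hyperplanes is $W(\g)$-conjugate to $J^\perp$ for a subset $J$ of a fixed set of simple roots, and conjugate subspaces yield equivalent restrictions, we may assume this restriction is to $X=J^\perp$ with $J$ a subset of the simple roots of $\g$. Choose $q$ as above, put $\B(M)=u_q(\g)^+$ and $M_J=\bigoplus_{i\in J}M_i$, so that $\B(M_J)=u_q(\g_J)^+$ is again finite-dimensional. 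Then Theorem \nref{thm_NicholsAlgebraRestrictionPBW}(d) gives a Nichols algebra $\B(\bar M)=\B(M)^{coin\;\B(M_J)}$ in the braided category $\YD{\B(M_J)}(\YD{\Gamma})$ whose root system is the reduced restriction of the $\g$-root system to $J^\perp$, hence equivalent to $\Ac$; by Theorem \nref{thm_EquivalenceArrangementCartanscheme} this is precisely the universal Weyl groupoid of $\Ac$, which is therefore the symmetry structure of $\B(\bar M)$.

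For part (1), the classification computation underlying Theorem \nref{thm_AllRestrictions} (illustrated by the worked example with $u_q(E_7)^+$ in the introduction) shows that the rank-three sporadic arrangements labeled $7,13,14,15,20,23$ in \cite{p-CH10}, with $13,16,17,17,19,19$ positive roots respectively, are parabolic restrictions of the reflection arrangement of type $E_7$ (for the label $7$) resp.\ $E_8$ (for the labels $13,14,15,20,23$). Applying the construction of part (2) with $\g=E_7$ resp.\ $\g=E_8$ produces Nichols algebras with exactly these symmetry structures. That none of these arrangements occurs over a finite group follows from Corollary \nref{cor_NicholsAlgebraExistingRootsystems}: these arrangements are irreducible, so a Nichols algebra realizing one would be indecomposable, and by the classifications \cite{Heck09} (over abelian groups) and \cite{HV14} (indecomposable Nichols algebras over nonabelian groups) the only rank-three crystallographic arrangements arising from Nichols algebras over finite groups are $A_3,B_3,C_3,\Ac_3^{1}(2),\Ac_3^{2}(2)$ and the sporadic ones labeled $1,2,3,6,8,9$. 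This addresses Question \nref{q_Cuntz}.

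The substantial inputs here are only Theorem \nref{thm_NicholsAlgebraRestrictionPBW} and the computer-assisted Theorem \nref{thm_AllRestrictions}; once these are granted the argument is essentially bookkeeping. The two points that require a little care are the reduction of a general intersection of hyperplanes to $X=J^\perp$ for $J$ a subset of simple roots (the classical conjugacy statement for intersection lattices of reflection arrangements) and, in part (1), the use of irreducibility to invoke the indecomposable classification of \cite{HV14}; I do not expect any genuine obstacle, since the finite-dimensionality hypotheses of Theorem \nref{thm_NicholsAlgebraRestrictionPBW} hold automatically for quantum Borel parts at suitable roots of unity.
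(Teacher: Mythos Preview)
Your proposal is correct and follows the same approach as the paper, which does not give a separate proof but presents the theorem as an immediate consequence of Theorem \nref{thm_NicholsAlgebraRestrictionPBW}, Theorem \nref{thm_AllRestrictions}, and Corollary \nref{cor_NicholsAlgebraExistingRootsystems}. Your added details (reducing an arbitrary $X\in L(\Ac)$ to $J^\perp$ via Weyl group conjugacy, and invoking irreducibility to apply the indecomposable classification in \cite{HV14}) are appropriate clarifications of points the paper leaves implicit.
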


\begin{question}
  Do all root systems appear as root systems of some Nichols algebra in some
  braided category $\CC$?
\end{question}

We now give first an example with a new sporadic Weyl groupoid of rank three labeled $7$:

\begin{example}\label{exm_newWeylGroupoid}
Let $\B(M)=u_q(E_7)^+$, which is a diagonal Nichols algebra of dimension 
$\ord(q^2)^{63}$. We consider the parabolic restriction indicated in the 
diagram:
\begin{center}
\includegraphics[scale=.6]{RestrictionE7To7.png}
\end{center}

Then the restriction of the Weyl group arrangement $E_7$ with $63$ roots has a 
Weyl groupoid of sporadic type $7$ and $13$ roots (one of which is nonreduced),  
namely\\

\begin{center}
\begin{tabular}{l|lllllllllllll}
$R_+^a$ 				&	$\bar{\alpha}_3$&$\bar{\alpha}_4$	&$\bar{\alpha}_5$ &	$(1,1,0)$ & $(0,1,1)$	&
$(1,1,1)$ & $+2(1,1,1)$	\\
multiplicity 		& $2$							& $2$							& $3$							&	$4$				& $6$ 			& 
$12$ & +3\\
\hline
$R_+^a$ 				& $(1,2,1)$	& $(2,2,1)$	& $(1,2,2)$	& $(2,3,2)$	& $(2,3,3)$	& $(2,4,3)$&$(3,4,3)$\\
multiplicity 		&	$6$				& $3$				& $6$				& $6$				& $2$				& $1$ 			& $2$\\
\end{tabular}
\end{center}~\\

This yields a Nichols algebra $\B(\bar{M})$ of dimension $\ord(q^2)^{58}$ over 
an object $\bar{M}$ of rank $3$ and dimension $2+2+3$ in the braided category 
$\YD{\B(M_J)}$ with $J=\{\alpha_1,\alpha_2,\alpha_6,\alpha_7\}$, where $\B(M_J)$ 
is an ordinary Borel part $u_q(A_1\times A_1\times A_2)^+$.\\

This root system is clearly not one of Lie type. This means that reflections in 
the Weyl groupoid connect the Weyl chamber described above with other chambers 
corresponding possibly to different Dynkin diagrams and non-isomorphic Nichols 
algebras (compare Example \nref{exm_CycleNicholsAlgebra}).\\

For the restriction these different choices arise from different parabolics 
$J'$ of $E_7$ in the Weyl group orbit of the choice $J$ above. To be more 
precise in our example: The sporadic Weyl groupoid of rank three labeled $7$ 
has by \cite{p-CH10} Table 1 altogether $12$ different\footnote{To be precise, 
this table only shows $12$ different root sets. Counting different 
multiplicities of the roots, the number of objects could a-priori multiply 
up to $96$ (simply connected Weyl groupoid). But in this example,  
there are $12$ parabolics $J'\cong A_1\times A_1\times A_2$ 
restricting to no.\ $7$.} chambers (objects) 
with the following Dynkin diagrams
% (see Fig.\ \ref{ocd37} for the object change diagram):
(see the Figure below for the object change diagram):
$A_3,B_3,C_3,\mathcal{A}_3^1(2)$
% \mbox{\it, and the sporadic Dynkin diagrams }
and the sporadic Dynkin diagrams
$\Gamma_3^1,\Gamma_3^7,\Gamma_3^{11},\Gamma_3^{15}$ in \cite{p-CH10}.\\
% \mbox{\it~ in \cite{p-CH10}}.

How do these different objects in the restriction arise, even 
though $E_7$ itself has only one type of chamber? This is because the parabolic 
$J$ is not fixed by the Weyl group of $E_7$. For example, the reflections in 
$E_7$ on the leftmost nodes $s_{\alpha_3}s_{\alpha_1}s_{\alpha_3}$ map $J$ 
above to the following $J'$. In the parabolic restriction this changes the 
object (chamber) above with Dynkin diagram $A_3$ to a different object 
(chamber) with Dynkin diagram $C_3$. In particular the now simple root with 
multiplicity $4$ is the root $(1,1,0)$ above and the double edge arises from the 
root string $(0,0,1),(1,1,1),(2,2,1)$.
\begin{center}
\includegraphics[scale=.6]{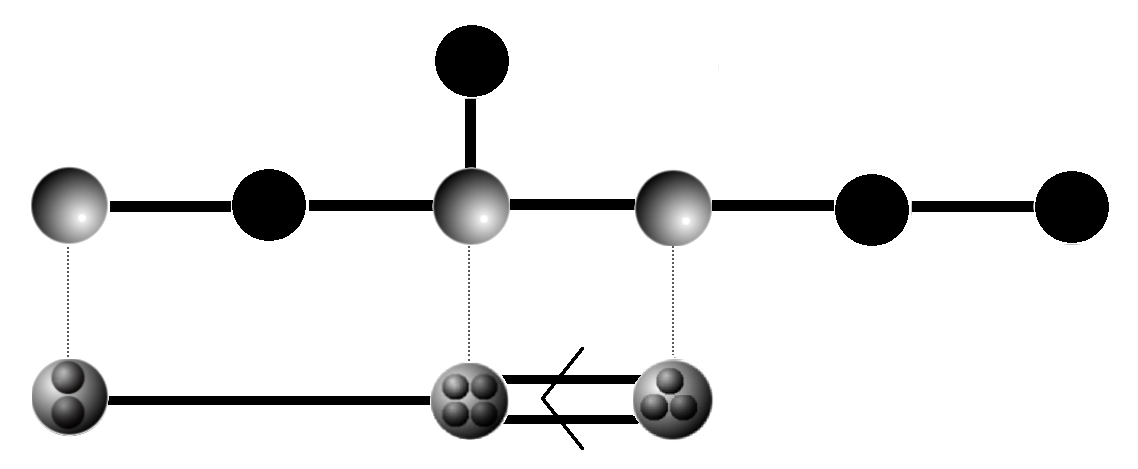}
\end{center}

% Wenn man Typ E7 aus der Cartan-Matrix
% [ 2  0 -1  0  0  0  0]
% [ 0  2  0  0 -1  0  0]
% [-1  0  2  0  0 -1  0]
% [ 0  0  0  2  0  0 -1]
% [ 0 -1  0  0  2  0 -1]
% [ 0  0 -1  0  0  2 -1]
% [ 0  0  0 -1 -1 -1  2]
% berechnet, dann ist das Arrangement Nummer 7 mit 13 Hyperebenen im Rang drei 
% Restriktion auf folgende Koordinatentupel:
% [1,2,7],[1,3,7],[1,5,6],[1,5,7],[1,6,7],[2,6,7],[3,4,5],[3,4,7],[3,5,6],[3,6,7],
% [4,5,6],[5,6,7].
% Entspricht den 12 objekten in \cite{p-CH10}
\end{example}

We finally present the full \emph{object change diagram} of the sporadic 
Weyl groupoid of rank three labeled $7$. It shows Cartan matrix and Dynkin 
diagram in each object (chamber) and the edges are labeled by reflections on 
the $i$-th simple root (simple roots in the standard ordering, differently from 
the ordering above!). Reflections not drawn map to the same object. The thick 
arrow shows the reflection of the $J$-restriction to the $J'$-restriction 
discussed above.

\begin{center}
\includegraphics[scale=.3]{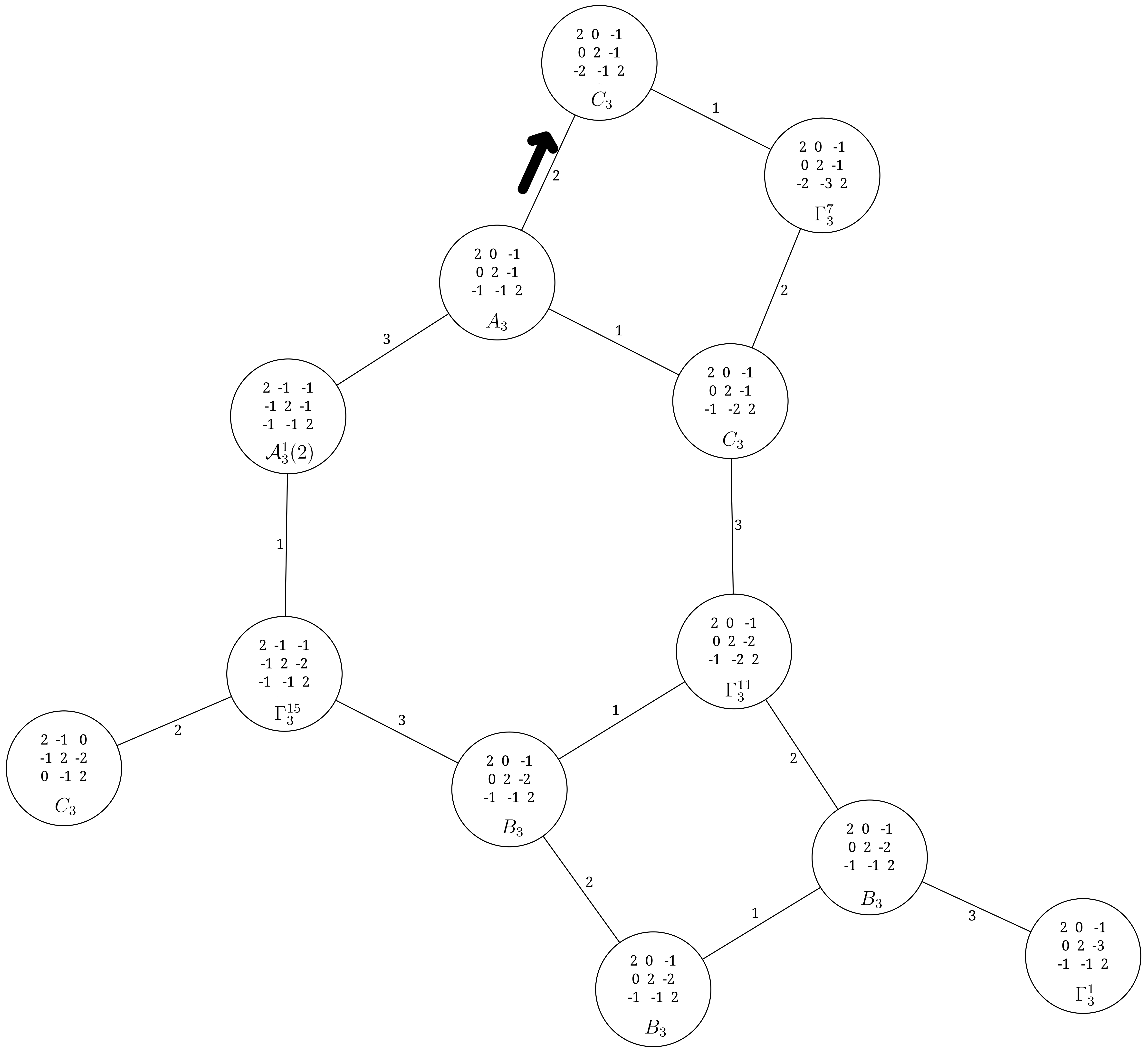}
\end{center}
% \begin{figure}
% \caption{The object change diagram of the sporadic Weyl groupoid of rank three 
% labeled $7$.}
% \label{ocd37}
% \end{figure}

\section{A simplicial complex of Nichols algebras}\label{simpcomp_section}

\subsection{The simplicial complex}

As in \cite{CMW15}, to a crystallographic arrangement $(\Ac,R)$ of rank $n$ we associate the \emph{simplicial complex}
$S$ on $V$ whose $(n-1)$-cells are the chambers of $\Ac$ intersected with the $n-1$-sphere.
This complex $S\subseteq V$ is topologically isomorphic to a sphere.
Sometimes it will be convenient to add a single $(-1)$-cell corresponding to the origin $0\in V$.
We need the following facts (see \cite{CMW15} for proofs and more details):

\begin{lemma}\label{lm_SimplicialComplex}~
\begin{enumerate}[a)]
\item Let $K$ be a chamber and $\Delta^K$ be the associated set of simple roots. Then the 
adjacent $(k-1)$ cells $x\subseteq K$ are in bijection with the subsets $J_{x,K}\subseteq \Delta^K$ of order $n-k$.
Moreover $x\subseteq X_x:=J_{x,K}^\perp$ is of dimension $k$.
\item Let $K_1\ne K_2$ be chambers and let $w$ be the Weyl groupoid element with $w.K_1=K_2$.
Then the intersection $K_1\cap K_2$ is a common adjacent $(k-1)$-cell 
$x\subseteq K_1,K_2$, $w$ is contained in the Weyl groupoid of the localization
$\Ac_{X_x}$, and $w.J_{x,K_1}=J_{x,K_2}$.
\end{enumerate}
\end{lemma}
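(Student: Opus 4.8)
The plan is to derive a) from the elementary convex geometry of simplicial cones together with the duality between $\Delta^K$ and the rays of $K$, and to derive b) from the fact that the chambers and all their faces form a complete polyhedral fan, combined with a standard separating-hyperplanes argument for galleries. For a): by Definition~\ref{A_R} the closure $\overline K$ is the simplicial cone on the rays $\alpha_1^\vee,\dots,\alpha_n^\vee$ whose dual basis is $\Delta^K=\{\alpha_1,\dots,\alpha_n\}$. Its relatively open faces are exactly the cones $F_T:=\langle\alpha_i^\vee\mid i\in T\rangle_{>0}$ for $T\subseteq I$, and $\dim F_T=|T|$; intersecting with the sphere, the $(k-1)$-cells $x$ adjacent to $K$ are precisely the $F_T$ with $|T|=k<n$. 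Setting $J_{x,K}:=\{i\in I\mid x\subseteq\alpha_i^\perp\}=I\setminus T$ (the walls of $K$ containing $x$) gives a subset of $\Delta^K$ of order $n-k$, and $x\mapsto J_{x,K}$ is a bijection onto the subsets of $\Delta^K$ of that order. Finally $X_x=\operatorname{span}(x)=\operatorname{span}\{\alpha_i^\vee\mid i\in T\}$; since $\alpha_j(\alpha_i^\vee)=\delta_{ij}$ this span lies in $\bigcap_{j\in J_{x,K}}\ker\alpha_j=J_{x,K}^\perp$, and both spaces have dimension $k=|T|=n-|J_{x,K}|$, so they coincide and $\dim X_x=k$.

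For b): the open chambers together with the cones $F_T$ attached to all of them form a complete fan, so any two closed chambers intersect in a common face; hence $\overline{K_1}\cap\overline{K_2}$ is the closure of a single cell $x$, of some dimension $k<n$ since $K_1\neq K_2$, adjacent to both $K_1$ and $K_2$. By a) we obtain $J_{x,K_1}\subseteq\Delta^{K_1}$ and $J_{x,K_2}\subseteq\Delta^{K_2}$, both of order $n-k$, with $J_{x,K_1}^\perp=X_x=J_{x,K_2}^\perp$ because $X_x=\operatorname{span}(x)$ does not depend on the chamber. Now $w$ is the unique morphism $a_1\to a_2$ in the connected, simply connected Weyl groupoid, and it is represented by a reduced word, i.e.\ by the reflections along a minimal gallery $K_1=D_0,D_1,\dots,D_\ell=K_2$. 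I would then show that every hyperplane $H\in\Ac$ separating $K_1$ from $K_2$ contains $x$: choose $p$ in the relative interior of $x$ avoiding all hyperplanes of $\Ac$ that do not contain $x$ (possible, since $\operatorname{relint}(x)$ is not covered by finitely many proper subspaces); then $p\in\overline{K_1}\cap\overline{K_2}$ and $p\notin H$, so $K_1$ and $K_2$ lie on the same side of $H$, a contradiction. Hence a minimal gallery crosses only walls of the localization $\Ac_{X_x}$, which is again crystallographic, and therefore $w$ lies in the Weyl groupoid of $\Ac_{X_x}$.

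For the last assertion $w.J_{x,K_1}=J_{x,K_2}$, I would use that $w\colon\ZZ^I\to\ZZ^I$ is precisely the change of coordinates sending, for each $\beta\in R$, its coordinate vector with respect to $\Delta^{K_1}$ to its coordinate vector with respect to $\Delta^{K_2}$ (this holds across a single wall, because $\sigma_i^a$ is exactly that change of coordinates there, and it is inherited by composites; cf.\ Theorem~\ref{thm_EquivalenceArrangementCartanscheme}). For $i\in J_{x,K_1}$ the simple root $\alpha_i^{(1)}\in\Delta^{K_1}$ vanishes on $X_x$; writing $\alpha_i^{(1)}=\sum_j\lambda_j\alpha_j^{(2)}$ in the basis $\Delta^{K_2}$ and restricting to $X_x$, the restrictions of the $\alpha_j^{(2)}$ with $j\in J_{x,K_2}$ vanish while the remaining $k$ of them form a basis of $X_x^*$, forcing $\lambda_j=0$ for $j\notin J_{x,K_2}$. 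Thus $w$ maps $\operatorname{span}\{\alpha_i\mid i\in J_{x,K_1}\}$ into $\operatorname{span}\{\alpha_j\mid j\in J_{x,K_2}\}$, and applying the same to $w^{-1}$ yields equality, which is the claim.

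I expect the main obstacle to be the middle step of b), namely making precise in what sense $w$ ``lies in the Weyl groupoid of $\Ac_{X_x}$''. This requires the facts that localizations of crystallographic arrangements are again crystallographic and that a minimal gallery of $\Ac$ from $K_1$ to $K_2$ descends to a gallery of $\Ac_{X_x}$ realizing the corresponding morphism there; the separating-hyperplanes argument above supplies the geometric input, but identifying the two Weyl groupoids cleanly is the delicate part. By contrast, the convex-geometric computations in a) and the coordinate bookkeeping in the previous paragraph are routine.
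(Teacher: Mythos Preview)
The paper does not actually prove this lemma: immediately before the statement it says ``We need the following facts (see \cite{CMW15} for proofs and more details)'' and no argument is supplied. So there is no paper proof to compare against, and your write-up is a self-contained attempt.

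Your argument is essentially sound and follows the standard route one would expect: part a) is straightforward from the face lattice of a simplicial cone and the duality $\alpha_j(\alpha_i^\vee)=\delta_{ij}$; for part b) the separating-hyperplane argument (picking a generic $p\in\operatorname{relint}(x)$) correctly shows that every wall crossed by a minimal gallery from $K_1$ to $K_2$ lies in $\Ac_{X_x}$, and your change-of-basis computation pins down the span of $J_{x,K_1}$ under $w$. Two points deserve tightening. First, you invoke without proof that the closed chambers together with all their faces form a complete fan, so that $\overline{K_1}\cap\overline{K_2}$ is a single common face; for simplicial arrangements this is true but not entirely trivial, and is one of the things actually established in \cite{CMW15}. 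Second, the assertion $w.J_{x,K_1}=J_{x,K_2}$ is stronger than the span statement you prove: you show $w$ maps the sublattice spanned by $J_{x,K_1}$ onto that spanned by $J_{x,K_2}$, but the lemma (as used later in the paper, e.g.\ in the remark after the definition in Section~\ref{simpcomp_section}) really wants $w$ to carry the \emph{set of simple roots} of the parabolic at $K_1$ to that at $K_2$. This follows once you know $w$ lies in the Weyl groupoid of $\Ac_{X_x}$, since $J_{x,K_i}$ is precisely the simple system of the localization at the chamber containing $K_i$ and a Weyl groupoid morphism is by construction a composite of simple reflections of the localization; but you should say so explicitly rather than stopping at the span.

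You flag the identification of $w$ with a morphism in the Weyl groupoid of $\Ac_{X_x}$ as the delicate step, and that is accurate: it uses that localizations of crystallographic arrangements are again crystallographic (cf.\ the references around Lemma~\ref{lm_ParabolicRestriction}) and that minimal galleries in $\Ac$ between $K_1$ and $K_2$ project to galleries in $\Ac_{X_x}$. With those two inputs your outline is complete.
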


\subsection{The decoration by Nichols algebras in braided categories}

Suppose we have a Nichols algebra $\B(M)$, for now in the braided category
$\CC=\YD{h}$. We consider the crystallographic arrangement
$(\Ac,R)$ with roots $\alpha\in R$ labeled by objects $M_\alpha\in \CC$ from Theorem
\nref{thm_NicholsAlgebraRootSystem}. More precisely, the initial Nichols algebra is $\B(M^a)$ associated to some chamber $a$ of $\Ac$ and one set of positive simple roots $\Delta_a^+$ such that $M^a=\bigoplus_{\alpha\in \Delta_a^+}M_\alpha$, while the other chambers correspond to Weyl equivalent Nichols algebras $\B(M^{a'})$. Thus it would be better to think of $(\Ac,R)$ being associated to the entire Weyl equivalence class and a specific representing Nichols algebra $\B(M^a)\in \CC$ being associated to each $(n-1)$-cell $a$. We now wish to associate Nichols algebra data to all $(k-1)$-cells $x$.

\begin{definition}
	As in Lemma \nref{lm_SimplicialComplex}, let $x$ be a $(k-1)$-cell, let $X_x\in L(\Ac)$ 
	the associated subspace of $V$, let $a$ be any adjacent chamber with simple roots 
	$\Delta^a$ and $J_{x,a}\subseteq \Delta^a$.\\
	Then we associate to the pair $(x,a)$ the braided category of the localized Nichols 
	algebra 
	$$\CC_x:=\YD{\B(M_{x,a})},\qquad M_x:=\bigoplus_{\alpha\in J_{x,a}}M_\alpha$$
	and the restricted Nichols algebra in this category as in Theorem \nref{thm_NicholsAlgebraRestrictionPBW}
	$$\B(M)^x:=\B(M^a)^{coin\,\B(M_{x,a})}\cong \B(M^{x,a}),\qquad  M^{x,a}
	:=\ad_{\B(M_{x,a})}\left(\bigoplus_{\alpha\in \Delta^a\backslash J_{x,a}}M_\alpha\right).$$
\end{definition}

\begin{remark}
The previous definition is \emph{independent} of the chosen adjacient chamber $a$ in the 
following sense: Let $a_1,a_2$ be chambers adjacent to $x$, then by Lemma \nref{lm_SimplicialComplex} b) we have an element $w$ in the Weyl groupoid of $\Ac$, which is contained in the localization to $X_x$, such that $w.a_1=a_2$ and $w.J_{x,a_1}=J_{x,a_2}$. Thus $R_w(M_{x,a_1})=M_{x,a_2}$ and the Nichols algebras $\B(M_{x,a_1}),\B(M_{x,a_2})$ are Weyl equivalent. By \cite{BLS14} Thm. 4.4 this implies there is a category isomorphism 
$$r_w:\;\YD{\B(M_{x,a_1})}\cong \YD{\B(M_{x,a_2})},$$
$$r_w:\;\B(M^{x,a_1})\mapsto \B(M^{x,a_2}).$$
In particular, the vector space, the root system and Dynkin diagram, and the vector space dimension of the nodes $\bigoplus_i M_{\bar{\alpha}_i}=M^{x,a}$ (multiplicities) are independent of $a$ (in contrast to $M_{x,a}$).
\end{remark}

\begin{corollary}
In this sense, the simplicial complex is decorated by sets of equivalent Nichols algebras in an equivalence class of braided categories and by Dynkin diagrams and root multiplicities.
\end{corollary}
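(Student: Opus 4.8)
The plan is to read the corollary off from the Definition of the decoration and the independence Remark that immediately precede it, with Lemma \nref{lm_SimplicialComplex} supplying the combinatorial skeleton and Theorem \nref{thm_NicholsAlgebraRestrictionPBW} identifying the invariants. First I would recall the set-up: a fixed Nichols algebra $\B(M^a)$ in $\CC=\YD{h}$ determines, by Theorem \nref{thm_NicholsAlgebraRootSystem}, the crystallographic arrangement $(\Ac,R)$ together with the root labels $M_\alpha$ and a Weyl-equivalence class of Nichols algebras $\B(M^{a'})$ indexed by the chambers $a'$. For each $(k-1)$-cell $x$ and each adjacent chamber $a$, Lemma \nref{lm_SimplicialComplex} a) gives the subset $J_{x,a}\subseteq\Delta^a$ with $X_x=J_{x,a}^\perp$, and the Definition attaches to the pair $(x,a)$ the braided category $\CC_x=\YD{\B(M_{x,a})}$ and the restricted Nichols algebra $\B(M)^x=\B(M^a)^{coin\,\B(M_{x,a})}\cong\B(M^{x,a})$ inside it.

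The only step with any content is independence of the chosen adjacent chamber, which is exactly what the preceding Remark establishes and which I would reproduce here: for two chambers $a_1,a_2$ adjacent to $x$, Lemma \nref{lm_SimplicialComplex} b) yields a Weyl groupoid element $w$ lying in the localization $\Ac_{X_x}$ with $w.a_1=a_2$ and $w.J_{x,a_1}=J_{x,a_2}$, hence $R_w(M_{x,a_1})=M_{x,a_2}$, so $\B(M_{x,a_1})$ and $\B(M_{x,a_2})$ are Weyl equivalent, and \cite{BLS14} Thm.~4.4 gives a category isomorphism $r_w\colon\YD{\B(M_{x,a_1})}\stackrel{\sim}{\longrightarrow}\YD{\B(M_{x,a_2})}$ carrying $\B(M^{x,a_1})$ to $\B(M^{x,a_2})$. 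Consequently the underlying $\NN^{\Delta^a\backslash J_{x,a}}$-graded object $M^{x,a}=\bigoplus_i M_{\bar{\alpha}_i}$, its root system, its Dynkin diagrams and the $\C$-dimensions of its simple summands (the restriction multiplicities) are intrinsic to $x$, and by Theorem \nref{thm_NicholsAlgebraRestrictionPBW} this root system is the reduced crystallographic arrangement of the restriction $(\Ac^{X_x},\bar R)$.

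The main --- and essentially only --- obstacle is a bookkeeping one: articulating precisely which piece of data is canonical and which is only canonical up to isomorphism. The braided category $\CC_x$ and the Nichols algebra $\B(M)^x$ genuinely depend on $a$, but only through the coherent family $(r_w)$ of equivalences above, so what is attached to a $(k-1)$-cell $x$ is an equivalence class of braided categories, a Weyl-equivalence class of Nichols algebras inside it, and the honest combinatorial data of $(\Ac^{X_x},\bar R)$ with its Dynkin diagrams and root multiplicities. Collecting these assignments over all cells --- the $(n-1)$-cells recovering $\B(M^a)$ for the trivial restriction $X=V$, the vertices the rank-one restrictions --- yields precisely the decoration claimed in the statement, and I would not expect any genuine difficulty beyond stating this coherence cleanly.
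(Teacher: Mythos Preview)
Your proposal is correct and follows exactly the paper's approach: the Corollary is stated without proof in the paper, as an immediate consequence of the preceding Definition and the independence Remark (invoking Lemma \nref{lm_SimplicialComplex} b) and \cite{BLS14} Thm.~4.4), which is precisely what you spell out. Your only addition is making the bookkeeping explicit, which the paper leaves to the reader.
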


We first give the two extremal examples:

\begin{example}
  The chambers $x=a\in A$ are the $(n-1)$-cells. Each corresponds to the empty	
  subsets $J_{a,a}=\{\}$ and is contained only
  in the chamber $a$ itself. It is hence decorated with the unique category 
  $$\CC_a=\YD{\B(M_{x,a})}=\YD{1_\CC}=1_\CC$$
  and the following Nichols algebra in this category:
  $$\B(M)^a=\B(M^a)^{coin\,1_{\B(M^a)}}=\B(M^a)\in \CC.$$
  Hence as expected we associate to each $(n-1)$-cell $a$ the Nichols algebra $\B(M^a)$ in 
	the base category $\CC$.
\end{example}
\begin{example}
  The center point $0$ is a $(-1)$-cell adjacent to any chamber $a$. It corresponds to 
	the full set of simple roots depending on the chamber $J_{0,a}=\Delta^a$
  and is hence decorated with the equivalence class of categories
	$$\CC_0=\YD{\B(M_{0,a})}=\YD{\B(M^a)}$$
  and the following Nichols algebras in each category:
  $$\B(M)^0=\B(M^a)^{coin\,\B(M^a)}=1_{\YD{\B(M^a)}}.$$
  Hence we associate to the $(-1)$-cell in the center the trivial Nichols algebras, i.e.\ on 
	unit objects in all braided categories $\YD{\B(M^a)}$ over some Weyl
  equivalence representative of the full Nichols algebra $\B(M^a)$.
\end{example}

We also note that \emph{simple reflection} can be visualized nicely from this picture: Let $a_1,a_2$ be adjacent chambers, meaning $a_1\cap a_2$ is an $(n-2)$-cell, i.e.\ in a unique hyperplane $H_\alpha,\alpha\in\Delta^{a_1}$. Then the reflection $r_\alpha:\B(M^{a_1})\to\B(M^{a_2})$ constitutes of the following steps:
\begin{itemize}
\item Restriction to the hyperplane $H_\alpha$ yields the restricted Nichols algebra $K:=\B(M^{a_1})^{coin\,\B(M_\alpha)}\in \YD{\B(M_\alpha)}=C_x$. 
\item The orientation chance $H_\alpha=H_{-\alpha}$ i.e.\ $J_{x,a_1}=\{\alpha\}\mapsto \{-\alpha\}=J_{x,a_2}$ is simply a dualization $\B(M_\alpha)\mapsto \B(M_\alpha^*)$, yielding equivalent categories $C_x=\YD{\B(M_\alpha)}\cong \YD{\B(M_\alpha)^*}$ and especially as image of $K$ a Nichols algebra $L\in \YD{\B(M_\alpha^*)}$.
\item Now $K$ is the restriction of the reflected Nichols algebra $\B(M^{a_2})$ to the hyperplane $x=H_{-\alpha}$ and we may conversely obtain the reflected Nichols algebra from $K$ by a Radford biproduct.  
\end{itemize}
 
\subsection{Example: A Nichols algebra of rank 3}\label{sec_ExampleSimplicialComplex}

Consider the diagonal Nichols algebra thoroughly treated in Example 
\nref{exm_CycleNicholsAlgebra}. It was of rank $3$, and had two types of 
chambers $a,a'$ with the following sets of $7$ positive roots

$$R^a_+=\{(1,0,0),(0,1,0),(0,0,1),(1,1,0),(1,0,1),(0,1,1),(1,1,1)\}$$
$$R^{a'}_+=\{(1,0,0),(0,1,0),(0,0,1),(1,1,0),(0,1,1),(1,1,1),(1,2,1)\}$$
$$\mbox{where }\alpha_1'=\alpha_{12},\alpha_2'=-\alpha_2,\alpha_3'=\alpha_{23}$$

The following picture now shows the arrangement in perspective: It is basically a cuboctahedron (an archimedian solid) with $8$ equilateral triangles and $6$ squares, which has each square subdivided into $4$ right triangles (by the three orthogonal planes $H_{\alpha_{ij}}$). There are hence $7$ hyperplanes and $8+24=32$ chambers. 

\begin{center}
\includegraphics[scale=.5, angle=0]{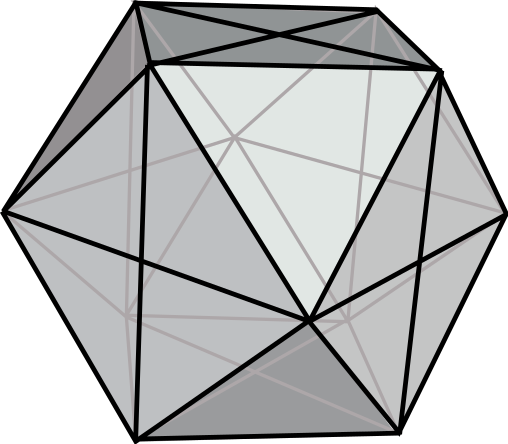}
\includegraphics[scale=.25]{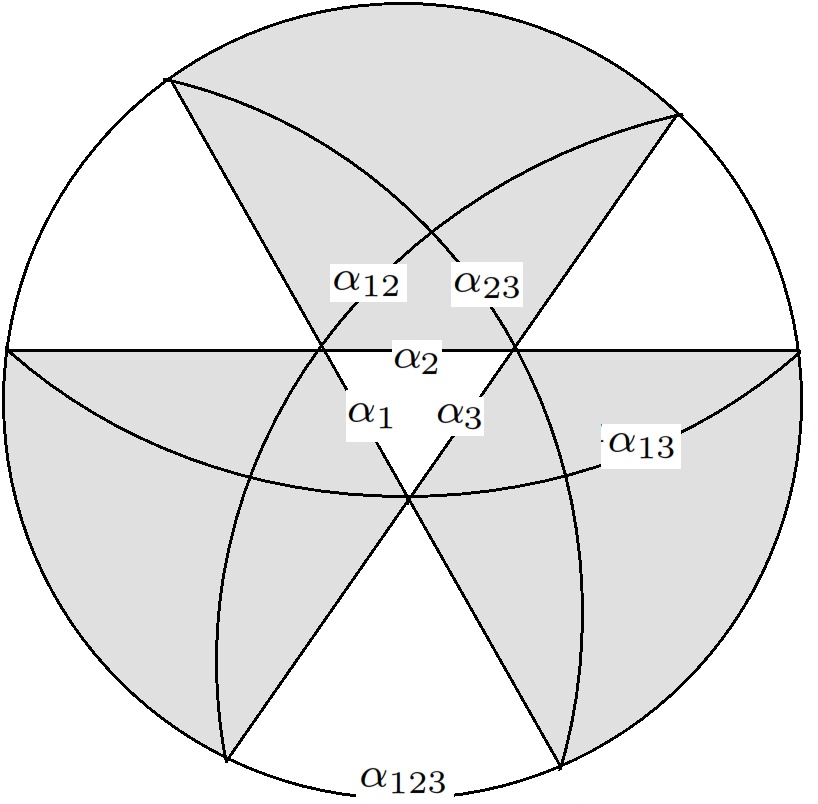}
\end{center}
\noindent
So we have a spherical simplicial complex of dimension $3-1$. We now calculate the categories and Nichols algebras associated to each $(k-1)$-cell:
\begin{itemize}
\item $2$-Cells: There are two types of chambers: 
	\begin{itemize}
	\item There are $8$ chambers associated to equilateral triangles as the example $x=a$ 
	bounded by $H_{\alpha_1},H_{\alpha_2},H_{\alpha_3}$. It is associated to the Nichols 
	algebra $\B(M^a)$ in the base category $\CC_x=\CC$ with Dynkin diagram a cycle and 
	Hilbert series 
	$(2)_{t}^3(3)_{t^2}^3(2)_{t^3}$
	\begin{center}
  \includegraphics[scale=.6]{DynkinZ3.png}
  \end{center}
	\item There are $24$ chambers associated to right triangles as in the example $x=a'$ 
	bounded by $H_{\alpha_{12}},H_{-\alpha_2},H_{\alpha_{23}}$ (note $H_{-\alpha_2}=H_{\alpha
	_2}$). It is associated to the Nichols algebra $\B(M^{a'})$ in the base category $\CC_x=
	\CC$ with Dynkin diagram $A_2$ (but one more root) and Hilbert series 
	$(2)_{t}(3)_{t}^2(2)_{t^2}^2(2)_{t^3}(3)_{t^4}$.
	\begin{center}
  \includegraphics[scale=.6]{DynkinA3.png}
  \end{center}
	\end{itemize}
\item $1$-Cells: There are two types of edges:
	\begin{itemize}
	\item There are $24$ edges between an equilateral and right triangle (the edges of the 
		cuboctahedron) as in the examples $a,a'$ the common hyperplane $x=H_{\alpha_2}$. It is 
		associated to the following restricted Nichols algebra $\B(M^{x,a})$ in the category 
		$$\CC_x=\YD{\B(M_{\alpha_2})}, \mbox{ where }\B(M_{\alpha_2})
		\cong\C[t_{\alpha_2}]/(t_{\alpha_2}^2)$$ 
		Note that we calculate the restriction from the chamber $a$, but the calculation from 
		$a'$ would return the same result except $\alpha_2\to-\alpha_2$. The generating Yetter-
		Drinfel'd modules is $M^{x,a}=M^{x,a}_1\oplus M^{x,a}_2$, the Nichols algebra is of 
		Lie type $A_3$. More precisely, the simple summands are
		$$M^{x,a}_1=M_{\alpha_1}\oplus M_{\alpha_{12}}\qquad
		M^{x,a}_2=M_{\alpha_3}\oplus M_{\alpha_{23}},$$
		$$[M^{x,a}_1,M^{x,a}_2]=M_{\alpha_{13}}\oplus M_{\alpha_{123}}.$$
		In particular, the Hilbert series of $\B(M^{x,a})$ is  
		$(2)_{t}^2(3)_{t}^2(3)_{t^2}(2)_{t^2}$.
		\begin{center}
		\includegraphics[scale=.6]{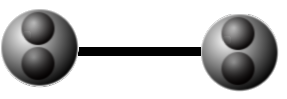}
		\end{center}
		\item There are $24$ edges between two right triangles (half subdividing diagonals in 
		the cuboctahedron squares) as in the examples $a'$ the hyperplane $x=H_{\alpha_{23}}$. 
		It is 
		associated to the following restricted Nichols algebra $\B(M^{x,a})$ in the category 
		$$\CC_x=\YD{\B(M_{\alpha_{23}})}, \mbox{ where }\B(M_{\alpha_{23}})
		\cong\C[t_{\alpha_{23}}]/(t_{\alpha_{23}}^3)$$
		The generating 
		Yetter-Drinfel'd modules is $M^{x,a}=M^{x,a}_1\oplus M^{x,a}_2$, the Nichols algebra 	
		is of type $B_2$. More precisely, the simple summands are
		$$M^{x,a}_1=M_{-\alpha_{2}}\oplus M_{\alpha_{1}}\qquad
		M^{x,a}_2=M_{\alpha_{23}}$$
		$$[M^{x,a}_1,M^{x,a}_2]=M_{\alpha_{3}}\oplus M_{\alpha_{123}}\qquad
		[M^{x,a}_1,[M^{x,a}_1,M^{x,a}_2]]=M_{\alpha_{13}}$$
		In particular, the Hilbert series of $\B(M^{x,a})$ is  
		$(2)_{t}^2(3)_{t}(2)_{t^2}^2(3)_{t^3}$.
		\begin{center}
		\includegraphics[scale=.6]{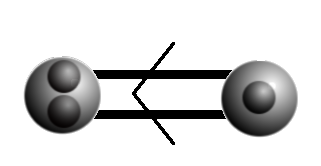}
		\end{center}
	\end{itemize}
	\item $0$-Cells: There are two types of $0$-cells: 
	\begin{itemize}
		\item There are $12$ vertices with altogether three intersecting hyperplanes (the 
		cuboctahedron vertices) as in the examples $a$ the bounding hyperplanes 
		$x=H_{\alpha_1}\cap H_{\alpha_3}$. It is 
		associated to the following restricted Nichols algebra $\B(M^{x,a})$ in the category 
		$$\CC_x=\YD{\B(M_{\alpha_{1}}\oplus M_{\alpha_3})}$$ 
		where $\B(M_{\alpha_{1}}\oplus M_{\alpha_3})$ is a Nichols algebra of dimension 
		$2^2 3=12$ with Dynkin diagram 
		$A_2$ (the quantum Borel part of $\mathfrak{sl(2|1)}$ at the 
third root of unity $q=\zeta$): The generating 
		Yetter-Drinfel'd module $M^{x,a}$ is simple, the Nichols algebra of type $A_1$: 
		$$M^{x,a}=M_{\alpha_{2}}\oplus M_{\alpha_{12}}
		\oplus M_{\alpha_{23}}\oplus M_{\alpha_{123}}$$
		In particular, the Hilbert series of $\B(M^{x,a})$ is	$(2)_{t}^2(3)_{t}^2$.
		\begin{center}
		\includegraphics[scale=.6]{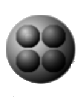}
		\end{center}
	  \item There are $6$ vertices with two orthogonally intersecting hyperplanes (the 
		subdivision centers of the cuboctahedron squares) as in the examples $a'$ the bounding 
		hyperplanes $x=H_{\alpha_{12}}\cap H_{\alpha_{23}}$. It is 
		associated to the following restricted Nichols algebra $\B(M^{x,a})$ in the category 
		$$\CC_x=\YD{\B(M_{\alpha_{12}}\oplus M_{\alpha_{23}})}, \mbox{ where }$$
		$$\B(M_{\alpha_{12}}\oplus M_{\alpha_{23}})\cong \C[t_{\alpha_{12}}]/(t_{
		\alpha_{12}}^3) \otimes \C[t_{\alpha_{23}}]/(t_{\alpha_{23}}^3)$$ 
		is a Nichols algebra of type 
		$A_1\times A_1$. The generating 
		Yetter-Drinfel'd modules $M^{x,a}$ is simple, the Nichols algebra of non-reduced type $
		A_1$: 
		$$M^{x,a}=M_{-\alpha_{2}}\oplus M_{\alpha_{12}}
		\oplus M_{\alpha_{23}}\oplus M_{\alpha_{123}}\qquad
		[M^{x,a},M^{x,a}]=M_{\alpha_{13}}$$
		In particular, the Hilbert series of $\B(M^{x,a})$ is  
		$(2)_{t}^4(3)_{t^2}$.
		\begin{center}
		\includegraphics[scale=.6]{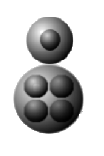}
		\end{center}
	\end{itemize}
	\item Again the $(-1)$-cell $x=0$ in the center is the trivial Nichols algebra $1\C$ 
	over any Weyl representative of the full Nichols algebra $\CC_x=\YD{\B(M^a)}$.\\
\end{itemize}

\def\cprime{$'$}
\providecommand{\bysame}{\leavevmode\hbox to3em{\hrulefill}\thinspace}
\providecommand{\MR}{\relax\ifhmode\unskip\space\fi MR }
% \MRhref is called by the amsart/book/proc definition of \MR.
\providecommand{\MRhref}[2]{%
  \href{http://www.ams.org/mathscinet-getitem?mr=#1}{#2}
}
\providecommand{\href}[2]{#2}

\end{document}